\DeclareFontFamily{U}{mathx}{\hyphenchar\font45}
\DeclareFontShape{U}{mathx}{m}{n}{
      <5> <6> <7> <8> <9> <10>
      <10.95> <12> <14.4> <17.28> <20.74> <24.88>
      mathx10
      }{}
\DeclareSymbolFont{mathx}{U}{mathx}{m}{n}
\DeclareMathAccent{\widecheck}{0}{mathx}{"71}
\newtheorem{theorem}[equation]{Theorem}
\newtheorem{question}[equation]{Question}
\newtheorem{lemma}[equation]{Lemma}
\newtheorem{prop}[equation]{Proposition}
\newtheorem{cor}[equation]{Corollary}
\newtheorem{definition}[equation]{Definition}
\theoremstyle{remark}
\newtheorem{remark}[equation]{Remark}
\newtheorem{notation}[equation]{Notation}
\newtheorem{convention}[equation]{Convention}
\newtheorem{assumption}[equation]{Assumption}
\numberwithin{equation}{section}
\newcommand{\Sim}{\displaystyle\operatornamewithlimits{\sim}}
\newcommand{\Stab}{\mathrm{Stab}}
\newcommand{\Span}{\mathrm{Span}}
\newcommand{\C}{\mathbb{C}}
\newcommand{\R}{\mathbb{R}}
\newcommand{\Z}{\mathbb{Z}}
\newcommand{\N}{\mathbb{N}}
\newcommand{\T}{\mathbb{T}}
\newcommand{\cat}{{\widecheck{K}}}
\newcommand{\Lcal}{\mathcal{L}}
\newcommand{\Ecal}{\mathcal{E}}
\newcommand{\Fcal}{\mathcal{F}}
\newcommand{\Pcal}{\mathcal{P}}
\newcommand{\sss}{{{\ensuremath{\mathrm{s}}}}}
\newcommand{\ssstilde}{\widetilde{\sss}}
\newcommand{\zz}{\ensuremath{{\mathrm{z}}}}
\newcommand{\group}{{\mathscr{G}  }}
\newcommand{\groupL}{{\group^L_{\sym}}}
\newcommand{\Phat}{\widehat{\Phi}}
\newcommand{\Phip}{\Phi'}
\newcommand{\zetabold}{{\boldsymbol{\zeta}}}
\newcommand{\zerobold}{{\boldsymbol{0}}}
\newcommand{\taubold}{{\boldsymbol{\tau}}}
\newcommand{\gbold}{\bold{g}}
\newcommand{\cunder}{\underline{c}}
\newcommand{\sym}{\mathrm{sym}}
\newcommand{\Psibold}{{\boldsymbol{\Psi}}}
\newcommand{\Sph}{\mathbb{S}}
\newcommand{\Ghat}{\widehat{G}}
\newcommand{\Ccal}{\mathcal{C}}
\newcommand{\id}{\operatorname{Id}}
\newcommand{\gtilde}{\widetilde{g}}
\newcommand{\Lcaltilde}{\widetilde{\Lcal}}
\newcommand{\Mcal}{\mathcal{M}}
\newcommand{\Rcap}{\mathsf{R}}
\newcommand{\Rcapunder}{\underline{\Rcap}}
\newcommand{\Tcap}{\mathsf{T}}
\newcommand{\dbold}{{\mathbf{d}}}
\newcommand{\vunder}{\underline{v}}
\newcommand{\osc}{{{\mathrm{osc}}}}
\newcommand{\ave}{{{\mathrm{avg}}}}
\newcommand{\domzb}{B_{\Pcal}}
\newcommand{\graph}{\operatorname{Graph}}
\newcommand{\tauunder}{\underline{\tau}}
\newcommand{\skernel}{\mathscr{K}}
\newcommand{\skernelv}{\widehat{\mathscr{K}}}
\newcommand{\val}{\mathscr{V}}
\newcommand{\Mbreve}{\breve{M}}
\newcommand{\Sigmabreve}{\breve{\Sigma}}
\newcommand{\Mhat}{\breve{M}}
\newcommand{\Ecalunder}{\underline{{\mathcal{E}}}} 
\newcommand{\kappaunder}{{\underline{\kappa}}}
\newcommand{\kappaunderbold}{{\boldsymbol{\underline{\kappa}}}}
\newcommand{\kappaperp}{\kappa^{\perp}}
\newcommand{\gammagl}{{\alpha}}
\newcommand{\disjun}{\textstyle\bigsqcup}
\newcommand{\zetaboldhatunder}{{\breve{\zetaboldunder}}}
\newcommand{\zetaboldunder}{\underline{\zetabold}}
\newcommand{\zetaboldhat}{{\breve{\boldsymbol{\zeta}}}}
\newcommand{\upphihat}{{\breve{\upphi}}}
\newcommand{\Omegahat}{\breve{\Sigma}}
\newcommand{\ubreve}{\breve{u}}
\newcommand{\ucheck}{\widecheck{u}}
\newcommand{\Dbreve}{\breve{D}}
\newcommand{\avg}{\operatornamewithlimits{avg}}
\newcommand{\Lapone}{\mathcal{E}_{\lambda_1}}
\newcommand{\Coord}{\mathcal{C}}
\newcommand{\low}{\mathrm{low}}
\newcommand{\high}{\mathrm{high}}
\newcommand{\wlow}{w_{\low}}
\newcommand{\whigh}{w_{\high}}
\newcommand{\wave}{w_{\ave}}
\newcommand{\ucir}{\mathring{u}}
\newcommand{\wcir}{\mathring{w}}
\newcommand{\gcir}{\mathring{g}}
\newcommand{\rtop}{{\overline{r}}}
\newcommand{\runder}{{\underline{r}}}
\newcommand{\gpeuc}{\mathring{g}}
\newcommand{\gcheck}{\widecheck{g}}
\newcommand{\Omegap}{\Omega_p}
\newcommand{\Xpm}{X_{\pm}}
\newcommand{\Xp}{X_+}
\newcommand{\Xm}{X_-}
\newcommand{\Eappr}{\Ecal_{\mathrm{low}}}
\newcommand{\muboldtilde}{\widetilde{\boldsymbol{\mu}}}
\newcommand{\mutilde}{\widetilde{\mu}}
\newcommand{\Spheq}{\mathbb{S}^2_{eq}}
\newcommand{\wosc}{w_{\osc}}
\newcommand{\Lpar}{L_{\mathrm{par}}}
\newcommand{\Omegapar}{\Omega_{\mathrm{par}}}
\begin{document}

\title[New minimal surface doublings]{New minimal surface doublings of the Clifford torus \\ and contributions to questions of Yau}

\author[N.~Kapouleas]{Nikolaos~Kapouleas}
\author[P.~McGrath]{Peter~McGrath}

\address{Department of Mathematics, Brown University, Providence, RI 02912}  
\email{nicolaos\_kapouleas@brown.edu}

\address{Department of Mathematics, North Carolina State University, Raleigh, NC 27695}
\email{pjmcgrat@ncsu.edu}

\date{\today}

\begin{abstract} 
The purpose of this article is three-fold.  
First, we apply a general theorem from our earlier work to produce many new minimal doublings of the Clifford Torus in the round three-sphere.  
This construction generalizes and unifies prior doubling constructions for the Clifford Torus, producing doublings with catenoidal bridges arranged along parallel copies of torus knots.  
Ketover has also constructed similar minimal surfaces by min-max methods as suggested by Pitts-Rubinstein, 
but his methods apply only to surfaces which are lifts of genus two surfaces in lens spaces, while ours are not constrained this way. 

Second, we use this family to prove a new, quadratic lower bound for the number of embedded minimal surfaces in $\mathbb{S}^3$ with prescribed genus.  
This improves upon bounds recently given by Ketover and Karpukhin-Kusner-McGrath-Stern, 
and contributes to a question of Yau about the structure of the space of minimal surfaces in $\mathbb{S}^3$ with fixed genus. 

Third, we verify Yau's conjecture for the first eigenvalue of minimal surfaces in $\mathbb{S}^3$ in the following cases. 
First, 
for all minimal surface doublings of the equatorial two-sphere constructible by our earlier general theorem. 
Second, for all the Clifford Torus doublings constructed in this article.  
\end{abstract}
\maketitle

\section{Introduction}
\label{Sintro}

\subsection*{The general framework}
\phantom{ab}

The first purpose of this article is to generalize and unify prior Clifford torus doubling constructions \cite{PittsR, KY, Wiygul, LDG, Ketover}. 
We proceed now to briefly review these earlier constructions. 

Most of the known closed embedded minimal surfaces in the round three-sphere $\Sph^3$ 
are known or expected to be either \emph{desingularizations} or \emph{doublings} of great two-spheres and Clifford tori,    
where we use the terms \emph{desingularizations} in the sense of \cite[Definition 1.3]{KapSurvey} and the further discussion there, 
and \emph{doublings} in the sense of \cite[Definition 1.1]{LDG}. 
For example the Lawson surfaces $\xi_{k,m}$ are desingularizations of $k+1$ great two-spheres intersecting symmetrically along a common great circle for $k\ge1$, $m\ge2$,  
and five of the nine minimal surfaces constructed in \cite{KPS} are doublings of $\Spheq$ in $\Sph^3(1)$.

In 1988 Pitts-Rubinstein proposed the construction by min-max methods of several new families of closed embedded minimal surfaces in $\Sph^3$ some of which 
are expected to be Clifford Torus doublings (examples 11-14 in Table 1 of \cite{PittsR}).  
In 2010, using PDE gluing methods, Kapouleas-Yang \cite{KY} constructed the first such examples.  
The idea of doubling constructions by PDE gluing methods is to construct approximately minimal surfaces by joining two copies of the \emph{base surface} ($\T$ in this case) with 
small catenoidal bridges and then correct to minimality by solving a PDE. 
The family constructed in \cite{KY} is consistent with example 11 from \cite{PittsR} and is parametrized by a number $m \in \N$ which needs to be assumed large enough 
because of the gluing methodology.  
For each such $m $ the minimal surface constructed satisfies the symmetries $\group^L_{\sym}$ of an $m\times m$ square lattice $L \subset \T \subset \Sph^3$ 
and contains one catenoidal bridge at each point of the lattice, so its genus is $m^2 + 1$. 

In 2020 Ketover-Marques-Neves produced similar surfaces by min-max methods in accordance to Pitts-Rubinstein's suggestion in \cite[Example 11]{PittsR}.  
Note that their surfaces are strongly expected to be identical to the Kapouleas-Yang surfaces but this has not yet been proved. 
In the constructions we discuss below it is often the case that different constructions provide surfaces with similar features which are strongly expected to be identical,  
and it is common to refer to them in the literature as being the same, 
although proving this remains a challenging open problem \cite[Remark 5.23]{LDG}. 

Wiygul \cite{Wiygul} generalized the construction in \cite{KY} to the case of $km\times lm$ rectangular lattices $L$, where $m$ is large in terms of given arbitrarily $k, l\geq 1$, 
and also to minimal \emph{stackings} of $\T$ which resemble more than two copies of $\T$ joined by catenoidal bridges.

In 2017 NK introduced a refinement of the original PDE gluing methodology for doubling constructions which he called \emph{Linearized Doubling (LD)}. 
Based on the LD methodology the authors proved a general theorem \cite[Theorem A]{LDG} which reduces the construction of a minimal surface doubling 
to the construction of suitable families of \emph{Linearized Doubling (LD) solutions}, 
that is singular solutions of the Jacobi equation with logarithmic singularities. 
As an application of this theorem they constructed \cite[Theorem 6.18]{LDG} doublings based on rectangular $k\times m$ lattices with any $k\in\N\setminus\{1,2\}$ and $m$ large enough in terms of $k$, 
with one bridge on each $\group^L_{\sym}$-fundamental domain, and also \cite[Theorem 6.25]{LDG} doublings with more than one bridge on each $\group^L_{\sym}$-fundamental domain.  
The first type of examples includes the family described in item 14 of Table 1 from \cite{PittsR}.

Using min-max theory, Ketover \cite{Ketover} constructed Clifford Torus doublings which are lifts of genus-two surfaces in lens space quotients.    
In more detail, for each lens space $L(p, q)$ with $q \notin \{1, p-1\}$, a one-parameter sweepout arising from a foliation of $L(p, q)$ constant mean curvature tori 
gives rise to a genus-two minimal surface in $L(p, q)$ which lifts to a minimal surface in $\Sph^3$.  
For a given $(a,b)$-torus knot and positive integer $k$, Ketover shows \cite[Lemma 5.4]{Ketover} that for appropriate sequences $(p_i, q_i)$, 
the lifts of the corresponding surfaces in $L(p_i, q_i)$ converge as varifolds to a multiplicity-two Clifford torus, 
with genus concentrating along $k$ parallel copies of the $(a,b)$-torus knot.  
Because the associated group-actions on $\Sph^3$ are free, the number $m$ of bridges on each such knot is constrained in terms of $(a, b)$ and $k$, 
and cannot be prescribed arbitrarily, even when large.  
The restrictions on $m$ are described in detail in \cite[Lemma 5.4]{Ketover}.

In this article we generalize our earlier constructions in \cite[Theorem 6.18]{LDG} by using torus knots to position the catenoidal bridges in the fashion of 
\cite[Example 13]{PittsR} and \cite{Ketover} but without any restrictions due to the difficulties associated with the min-max approach. 
This allows us to produce many more examples as described below. 
Moreover the geometry of these examples is precisely controlled by the gluing construction and this allows us to answer questions which cannot be addressed by min-max constructions. 

This way we contribute to two questions of Yau included in his extremely influential and inspirational collection of over a hundred problems and questions in Differential Geometry 
which he presented in 1982 \cite{Yau:problems}.   
These problems have been a source of inspiration which has driven progress in the field for over 40 years, 
and work towards resolving them has led to tremendous advances as for example in \cite{Song, BrendleLawson, MarquesNeves}.  
The first question is as follows.

\begin{question}[Yau (1982), {\cite[Problem 96]{Yau:problems}}]
What is the structure of the space of minimal surfaces of a fixed genus in $\Sph^3$?  
\end{question}

Let $\Mcal_\gamma$ denote the space, modulo ambient isometries, of minimal embeddings to $\Sph^3$ of the closed surface of genus $\gamma$.  
For $\gamma =0$ or $\gamma = 1$, this space is completely understood: Almgren showed \cite{Almgren} the only minimal $2$-spheres in $\Sph^3$ are round, 
and Brendle \cite{BrendleLawson} proved Lawson's conjecture that the Clifford Torus is the only minimal surface with genus $1$ embedded in $\Sph^3$.  
In general, Lawson showed \cite{Lawson} that $\Mcal_\gamma$ is nonempty, Choi-Schoen proved $\Mcal_\gamma$ is compact \cite{ChoiSchoen} in the $C^k$ topology, 
and Bryant showed that every closed Riemann surface admits a conformal minimal immersion to $\Sph^4$.  
For large $\gamma$, constructions of minimal surfaces by doubling the equatorial $2$-sphere $\Spheq$ \cite{KPS, KapSph, KapMcG, LDG, KKMS} 
and by doubling the Clifford Torus \cite{PittsR, KY, Wiygul, LDG, Ketover} have each led to a better understanding of the moduli spaces $\Mcal_\gamma$.  

In particular, by constructing doublings of the Clifford torus, Ketover \cite[Theorem 1.15]{Ketover} showed the \emph{almost linear} bound 
$|\Mcal_\gamma| \geq C\gamma / \log \log \gamma$ for some $C> 0$ and all large enough $\gamma$, thus showing that $|\Mcal_\gamma|$ tends to infinity as $\gamma$ does.  
Very recently, Karpukhin-Kusner-McGrath-Stern proved \cite[Theorem 1.1]{KKMS} the \emph{linear} bound $|\Mcal_\gamma| \geq \lfloor \frac{\gamma-1}{4}\rfloor+1$, 
valid for all $\gamma \geq 0$, by constructing $\Spheq$-doublings by maximizing the normalized first eigenvalue of the Laplacian in an equivariant setting.  
In this article we prove a quadratic bound $|\Mcal_\gamma| \geq C \gamma^2$ by roughly counting the new Clifford torus doublings we produce.

The second problem we are interested in is the famous 
Yau conjecture for the first eigenvalue of minimal surfaces in $\mathbb{S}^3$ which was stated as follows. 

\begin{question}[Yau (1982), {\cite[Problem 100]{Yau:problems}}]
Is it true that the first eigenvalue for the Laplace-Beltrami operator on an embedded minimal hypersurface of $\Sph^{n+1}$ is $n$?
\end{question}
The affirmative answer to this question, which we refer to in this article as \emph{Yau's conjecture}, 
is known \cite{YangYau} to imply genus-dependent upper area bounds for embedded minimal surfaces in $\Sph^3$, 
and also to imply \cite{MontielRos} Lawson's conjecture that the Clifford torus is the unique genus-$1$ minimal surface embedded in $\Sph^3$.  
Independently of Yau's conjecture, Brendle proved  \cite{BrendleLawson} Lawson's conjecture in 2012.  
Furthermore, minimal embeddings in $\Sph^3$ by first eigenfunctions, which conjecturally comprise all minimal embeddings to $\Sph^3$ by Yau's conjecture, 
have been extensively studied \cite{LiYau, MontielRos, KusnerMcGrath, KKMS}; in particular, many new such examples have been recently constructed in \cite{KKMS}.

There are two main types of partial results towards Yau's conjecture.  
The first of these is to establish a suboptimal lower bound for the first eigenvalue $\lambda_1(M)$ on each minimal hypersurface $M$ in $\Sph^n$, 
and the second is to prove the conjectured equality $\lambda_1(M) = n$  for some subclass of minimal hypersurfaces in $\Sph^n$.  
With regards to the first type, in 1983, Choi-Wang \cite{ChoiWang} proved the universal lower bound $\lambda_1(M) \geq n/2$.  
Recently refinements of their method have led to improved bounds \cite{SpruckSire, Sire2} of the form $\lambda_1(M) \geq n/2 +C(M)$, 
where $C(M)$ is an explicit constant depending on $\max_M |A|$ tending to zero as $\max_M |A|$ tends to infinity; here $|A|$ is the length of the second fundamental form on $M$. 

With regards to results of the second type, Yau's conjecture is known to hold  \cite{TangYan} on all isoparametric minimal surfaces in $\Sph^{n+1}$, 
and by work of Choe-Soret \cite{ChoeSoret}, on the Lawson and Karcher-Pinkall-Sterling surfaces in $\Sph^3$.  
The Choe-Soret method is based on using Courant's nodal domain theorem in conjunction with the large symmetry groups generated by hyperplane reflections on the minimal surfaces under consideration.  

Kusner-McGrath extended this method and verified in \cite{KusnerMcGrath} Yau's conjecture for families of minimal $\Spheq$-doublings constructed in \cite{KapMcG}.  
Actually, the results in \cite{KusnerMcGrath}, which apply also to the Lawson and Karcher-Pinkall-Sterling surfaces, 
prove the stronger statement that the first eigenspace $\Ecal_{\lambda_1}(M)$ coincides with the span $\Coord(M)$ 
of the coordinate functions when $M$ is one of preceding minimal surfaces.  
Nonetheless, there are minimal surfaces with large symmetry groups, even generated by hyperplane reflections, 
for which these methods notably fail.  
This is the case for other $\Spheq$-doublings constructed in \cite{LDG}, and for Clifford Torus doublings \cite{KY, Wiygul, Ketover, LDG, KetoverCat}.  
In particular until now the Yau conjecture had not been verified for any Clifford Torus doublings. 
In this article we verify the Yau conjecture for our Clifford torus doublings and doublings of $\Spheq$ with small enough catenoidal bridges. 

\subsection*{Brief discussion of the results}
\phantom{ab}

This article involves two applications of our general methodology \cite{LDG} for constructions of minimal surface doublings in $3$-manifolds.  First, we use our general existence theorem \cite[Theorem A]{LDG} to construct many new minimal doublings of the Clifford Torus in $\Sph^3$; and second, we show that Yau's conjecture holds for many of the doublings in $\Sph^3$ constructible via this theorem, including for all such doublings of the equatorial two-sphere $\Spheq$, and for all Clifford Torus doublings constructed in the first part of the article.  These doublings encompass the prior doublings constructed via gluing methods.  We now discuss these results in more detail. 

In Theorem \ref{Tcliff}, we construct a family of doublings which is expected to include the examples constructed by Ketover in \cite{Ketover}.  The family is parametrized by a relatively prime pair $(a,b)$ of integers and positive integers $k$ and $m$; on each of $k$ maximally separated parallel copies of an $(a,b)$-torus knot on $\T$, the set $L$ of centers of the catenoidal bridges contains $m$ equally spaced points, arranged so that the symmetry group $\group^L_{\sym} \leq O(4)$ of $L$ as a subset of $\Sph^3 \subset \R^4$ acts transitively on $L$.  A fundamental domain for the action of $\group^L_{\sym}$ on $\T$ can be taken to be a parallelogram containing one point of $L$.   The construction can be carried out as long as $m$ is chosen large enough in terms of $k \sqrt{a^2+b^2}$ and $(a, b, k) \notin \{(1,1,1), (1,2,1), (2, 1, 1)\}$.  Given $(a, b, k)$, for many sufficiently large $m$, the corresponding doublings do not pass to a genus two surface in any lens space quotient.

For fixed but large $m$ any such doubling with parameters $(a, b, k)$ and $k=1$ has genus $m+1$.  Estimating the number of such doublings leads to (Proposition \ref{Pnum})  a bound of the form
\begin{align*}
|\Mcal_\gamma| \geq C \gamma^2
\end{align*}
on the number $|\Mcal_\gamma|$ of pairwise non-isometric minimal surfaces embedded in $\Sph^3$ with genus $\gamma$.  This improves Ketover's bound $|\Mcal_\gamma| \geq C \gamma/ \log \log \gamma$ from \cite{Ketover}, and the linear bound $|\Mcal_{\gamma}| \geq \lfloor \frac{\gamma-1}{4} \rfloor + 1$ from \cite{KKMS}.

While Ketover's doublings from \cite{Ketover} are expected to coincide with a subfamily of the surfaces from \ref{Tcliff}, we lack a proof that this is the case.  We remark, however, that characterizations of other families of minimal surfaces in $\Sph^3$ by their symmetry have recently been established \cite{KapWiygulsym, KWcliff}.

In Theorem \ref{Tcliff2}, we generalize the preceding to situations in which a fundamental parallelogram for the group $\group^L_{\sym}$ contains three points of the configuration $L$ which are inequivalent modulo $\group^L_{\sym}$.  As discussed in \cite[p. 305]{LDG}, these constructions can be further generalized to situations with more than three non-equivalent bridges per fundamental domain. 

We now discuss the results related to Yau's conjecture.  In Theorem \ref{Tsph} we prove that the first Laplace eigenspace $\Ecal_{\lambda_1}(M)$ on any side-symmetric minimal $\Spheq$-doubling $M$ in $\Sph^3$ constructible by our general existence result \cite[Theorem A]{LDG} is spanned by the coordinate functions, proving the Yau conjecture for such surfaces.  By side-symmetric we mean the doubling is invariant under the reflection of $\Sph^3$ fixing pointwise the base $\Spheq$; this condition is satisfied by all known $\Spheq$-doublings and is enforceable in \cite[Theorem A]{LDG}, and we expect all minimal $\Spheq$-doublings to be side-symmetric.  Importantly, and in contrast to the main results in \cite{ChoeSoret, KusnerMcGrath}, no other symmetry assumptions are required in Theorem \ref{Tsph}. 

In Theorem \ref{Tcliff} we prove the first Laplace eigenspace $\Ecal_{\lambda_1}(M)$ on any minimal $\T$-doubling $M$ in $\Sph^3$ constructed earlier in the article is spanned by the coordinate functions, proving Yau's conjecture for these doublings.  As mentioned before, the $\T$-doublings constructed in this article are expected to encompass all known such examples, including the ones constructed in the articles \cite{KY, Wiygul, LDG, Ketover}. 

\subsection*{Outline of strategy and main ideas}
\phantom{ab}

The minimal surface constructions follow the Linearized Doubling (LD) methodology introduced by NK in \cite{KapSph}, which we developed further in \cite{KapMcG, LDG}.  Given a minimal surface $\Sigma$ embedded in a Riemannian $3$-manifold $N$, the LD approach produces an embedded minimal surface $\Mbreve \subset N$ doubling $\Sigma$ in three steps.

The first step is to construct on $\Sigma$ a suitable family of \emph{Linearized Doubling (LD) solutions}: an LD solution $\varphi$ is a singular solution of $\Lcal_\Sigma \varphi = 0$ with logarithmic singularities, where $\Lcal_\Sigma$ is the Jacobi operator on $\Sigma$.  In the second step the LD solutions are converted to approximately minimal ``initial surfaces" with the aid of chosen finite-dimensional obstruction spaces $\skernelv[L]$.  The initial surface $M$ corresponding to an LD solution $\varphi$ consists of catenoidal bridges joined smoothly to the graphs of $\varphi + \vunder$ and $- \varphi - \vunder$ for some $\vunder \in \skernelv[L]$, and each bridge is located in the vicinity of a singular point of $\varphi$ and its size is given by the strength of the logarithmic singularity of $\varphi$ at the point.  In the final step, one of the initial surfaces is perturbed to exact minimality, providing the new minimal surface.

For the new constructions in \ref{Tcliff} and \ref{Tcliff2}, the symmetry group $\group^L_{\sym}$ is generated by reflections through three points $p_0, p_1, p_2$ on $\T$; with respect to a particular fundamental parallelogram $P$ for $\group^L_{\sym}$ (see \eqref{dL} and Figure \ref{Fim0}), these points correspond to a corner vertex of $P$, the midpoint of one side of $P$, and the midpoint of another side of $P$.  More specifically given $(a,b)$ as before, the geodesic segment joining $p_0$ and $p_1$ lies on an $(a,b)$ torus knot on $\T$.  In the first construction \ref{Tcliff}, there is one singularity modulo the symmetries, with the singular set consisting of the $\group^L_{\sym}$ orbit of one of the three points.  In the second construction \ref{Tcliff2}, there are three singularities modulo the symmetries, with the singular set the union of the $\group^L_{\sym}$-orbits of each of the three points.  See Figures \ref{Fim0} and \ref{Fim}.

In particular, the symmetries completely determine the singular sets $L$, and there are no horizontal forces.  In the first construction, the family of LD solutions has one continuous parameter, and in the second the family has three such parameters; in each case, the parameters govern the vertical matching equations corresponding to the points of $L$. 

We now turn to Yau's conjecture, and first describe the strategy for proving the first eigenspace $\Ecal_{\lambda_1}$ on a side-symmetric $\Spheq$-doubling  $\Mbreve$ embedded in $\Sph^3$ is spanned by the coordinate functions.  Because of the symmetry, we have an orthogonal decomposition $\Ecal_{\lambda_1} = \Ecal^+_{\lambda_1} \oplus \Ecal^-_{\lambda_1}$ into even and odd parts with respect to the side-exchanging reflection.  

Since a straightforward nodal-domain argument (Proposition \ref{Leven}) shows that $\Ecal^-$ is either trivial or spanned by the coordinate function vanishing on $\Spheq$, the main task is to establish (Proposition \ref{Pee}) the even part $\Ecal^+$ is either trivial or spanned by the remaining three coordinate functions. This is done by first decomposing the doubling $\Mbreve$ into \emph{catenoidal} and \emph{graphical} regions, and studying the contributions on each of these regions to the Dirichlet energy of an eigenfunction $u \in \Ecal^+_{\lambda_1}$.  Because of the even symmetry and the smallness of the bridges, it is shown in Lemma \ref{Lw0uest2} that the contribution from the catenoidal regions is small in comparison to the contribution from the graphical part.  Because the graphical part is quantitatively close to its projection $U\subset \Spheq$, the eigenfunction $u$ then gives rise to a function $w$ on $\Sigma$ with small Rayleigh quotient.  Finally, since the first eigenspace on $\Spheq$ is spanned by three coordinate functions, it follows that if $u$ is nonzero, it cannot be orthogonal to the span of these coordinate functions.

In the general case where the base minimal surface $\Sigma \subset \Sph^3$ is not totally-geodesic, a doubling $\Mbreve$ constructed from \cite[Theorem 5.7]{LDG} is not side-symmetric, so $\Ecal_{\lambda_1}$ cannot be decomposed into even and odd parts as before.  Nonetheless, $\Mbreve$ is approximately side-symmetric because its catenoidal bridges are small, and the two sides of $\Mbreve$ are exactly isometric with respect to a small perturbation $g'$ of the induced metric $g$.  Since the eigenspaces of $(\Mbreve, g')$ with low eigenvalues can be characterized similarly to the $\Spheq$-doubling case, the main task is to show that the smallest eigenvalue $\lambda_i$ for which $\Ecal^+_{\lambda_i} (\Mbreve, g')$ is nontrivial is strictly larger than $2$.  When the base $\Sigma$ is the Clifford Torus, and the doubling is constructed as in \ref{Tcliff} or in \ref{Tcliff2}, the additional symmetries enable us to estimate in Proposition \ref{Peigen} that $\lambda_i$ is close to $4$.

\subsection*{Organization of the presentation}
\phantom{ab}

Besides the introduction, this article has three more sections and one appendix.  In Section \ref{Scliff} we construct the new Clifford Torus doublings.  After defining the singular sets $L$ and symmetry groups in \ref{dL}, in \ref{Lphiave} and \ref{LPhipest} we estimate the corresponding LD solutions.  Theorems \ref{Tcliff} and \ref{Tcliff2} construct the families of doublings described earlier, and Proposition \ref{Pnum} proves the quadratic lower bound on the number of Clifford Torus doublings with prescribed genus. 

Section \ref{Ssph} is concerned with the first eigenspace $\Ecal_{\lambda_1} = \Ecal_{\lambda_1}(\Mbreve)$ on minimal doublings $\Mbreve$ of the equatorial $\Spheq$ in $\Sph^3$. After introducing a decomposition $\Ecal_{\lambda_1} = \Ecal^+_{\lambda_1} \oplus \Ecal^-_{\lambda_1}$ into even and odd parts with respect to the reflection fixing $\Spheq$, we characterize the odd part $\Ecal^-_{\lambda_1}$ in \ref{Cem} and the even part in \ref{Pee}.  These characterizations lead to the proof of the main result, Theorem \ref{Tsph}. 

In Section \ref{SYauCliff}, we study the first eigenspace $\Ecal_{\lambda_1}$ on doublings of the Clifford Torus doublings.  The main technical result is Proposition \ref{Peigen}, which provides a lower bound for an eigenvalue corresponding to an approximately odd eigenfunction on a such a doubling.  This leads to the main result, Theorem \ref{TcliffE}.  Finally, Appendix \ref{Stori} catalogues some well-known results \cite{Berger, MontielRos} concerning the spectrum of the Laplacian on two-dimensional tori. 

\subsection*{General notation and conventions}
\phantom{ab}

In comparing equivalent norms or other quantities we will find the following notation useful. 
\begin{definition}
\label{Dsimc}
We write $a\Sim_c b$ to mean that 
$a,b\in\R$ are nonzero of the same sign, 
$c\in(1,\infty)$, 
and $\frac1c\le \frac ab \le c$.  We also write $a \lesssim b$ to mean that $a \leq C b$ for some constant $C> 0$ depending only on the background $(\Sigma, N, g)$. 
\end{definition}

\subsection*{Acknowledgments}
\phantom{ab}

This article is partially based upon work supported by the National Science Foundation under Grant No. DMS-1928930, 
while NK was in residence at the Simons Laufer Mathematical Sciences Institute (formerly MSRI) in Berkeley, California, 
during the Fall 2024 semester.

\section{Notation and Background from \cite{LDG}}
\label{SLDG}

\subsection*{Elementary geometry and notation} 
\phantom{ab}

We introduce some notation from \cite{KapWiygulsym}.  

\begin{notation}
For any $A \subset \Sph^3 \subset \R^4$ we denote by $\Span(A)$ the span of $A$ as a subspace of $\R^4$ and we set $\Sph(A) : = \Span(A) \cap \Sph^3$. 
\end{notation}

Given a vector subspace $V$ of the Euclidean space $\R^4$, we denote by $V^\perp$ its orthogonal complement in $\R^4$, and we define the reflection in $\R^4$ with respect to $V$, $\Rcapunder_V : \R^4 \rightarrow \R^4$, by
\begin{align}
\Rcapunder_V : = \Pi_V - \Pi_{V^\perp},
\end{align}
where $\Pi_V$ and $\Pi_{V^\perp}$ are the orthogonal projections of $\R^4$ onto $V$ and $V^\perp$ respectively.

\begin{definition}[$A^\perp$ and reflections $\Rcapunder_A$]
\label{dref}
Given and $A \subset \Sph^3 \subset \R^4$, we define $A^\perp : = (\Span(A))^\perp \cap \Sph^3$ and $\Rcapunder_A : \Sph^3 \rightarrow \Sph^3$ to be the restriction to $\Sph^3$ of $\Rcapunder_{\mathrm{Span}(A)}$.  Occasionally we will use simplified notation: for example for $p, q \in \Sph^3$ we may write $\Rcapunder_{p, q}$ instead of $\Rcapunder_{\{p, q\}}$.  
\end{definition}

If $\group$ is a group acting on a set $B$ and if $A$ is a subset of $B$, then we refer to the subgroup
\begin{align}
\Stab_{\group}(A) : = \{ \gbold \in \group : \gbold A = A \}
\end{align}
as the \emph{stabilizer} of $A$ in $\group$.  For $A$ a subset of $\Sph^3$ we set
\begin{align}
\group^A_{\sym} : = \Stab_{O(4)} A = \{ \gbold \in O(4) : \gbold A = A\}. 
\end{align}

\begin{definition}
\label{dT}
Given $v = (a,b) \in \R$, we define $\Tcap_v \in SO(4)$ by 
\begin{align*}
\Tcap_v(z, w) = (e^{i a} z, e^{ib} w),
\end{align*}
where we have identified $\R^4$ with $\C^2$. 
\end{definition}

\subsection*{Definitions and Background from \cite{LDG}}
\phantom{ab}

In order to keep this article self-contained, we must introduce some notation and definitions from our article \cite{LDG}.  Our main goal is to state, in Theorem \ref{Ttheory}, the main theorem from \cite{LDG}, along with its assumptions, which we collect in \ref{ALDfam}.  To state these assumptions, which will be verified in Section \ref{Scliff} once we have described the specific setting in which we will work, we now introduce several definitions from sections 3 through 5 of \cite{LDG}.

In what follows, we let $\Sigma$ be the Clifford torus and $(N, g)$ to be the three-sphere $\Sph^3$ with its round metric, and denote by $\Lcal_\Sigma : = \Delta + |A|^2 = \Delta + 4$ the Jacobi operator on $\Sigma$.  

For $A \subset \Sigma$, we write $\dbold^{\Sigma, g}_A$ for the distance function from $A$ with respect to $g$ and we define the \emph{tubular neighborhood of $A$ of radius} $\delta > 0$ by $D_A(\delta) = \{ p \in \Sigma : \dbold^{\Sigma, g}_A(p) < \delta\}$.  We may omit either of $\Sigma$ or $g$ when clear from context. If $A$ is finite we may just enumerate its points in both cases, for example if $A = \{ q\}$ we write $\dbold_q(p)$.  

\begin{convention}
\label{cLker}
We assume we are given a finite set $L\subset \Sigma$ such that the operator $\Lcal_\Sigma$ has trivial kernel, when restricted to $\group^L_\sym$-symmetric functions. 
\end{convention}

\begin{definition}[Spaces of affine functions] 
\label{DVcal}
Given $p\in\Sigma$ let $\val[p]\subset C^\infty(T_p\Sigma) $ be the space of \emph{affine functions on $T_p\Sigma$}.  
Given a function $v$ which is defined on a neighborhood of $p$ in $\Sigma$ and is differentiable at $p$ we define $\Ecalunder_p v:= v(p)+ d_pv\in\val[p]$. 
$\forall\kappaunder\in\val[p]$ let $\kappaunder=\kappaperp+\kappa$ be the unique decomposition with $\kappaperp\in\R$ and $\kappa\in T^*_p\Sigma$  
and let $|\kappaunder| := |\kappaperp| + |\kappa|$.  
We define for later use $\val[L] := \bigoplus_{p\in L} \val[p]$ for any finite $L\subset\Sigma$.  
\end{definition} 

\begin{convention}
\label{con:alpha}
\label{Akappa} 
We fix now some $\alpha >0$ which we will assume as small in absolute terms as needed.  
\end{convention}

\begin{definition}[LD solutions]
\label{dLD}
We call $\varphi$ a \emph{linearized doubling (LD) solution on $\Sigma$}     
when there exists 
a finite set $L\subset \Sigma$, 
called the \emph{singular set of $\varphi$}, 
and a function 
$\taubold: L \rightarrow \R\setminus\{0\}$,  
called the \emph{configuration of $\varphi$}, 
satisfying the following,   
where $\tau_p$ denotes the value of $\taubold$ at $p\in L$.  
\begin{enumerate}[label=\emph{(\roman*)}]
\item 
$
\varphi \in C^\infty (\, \Sigma \setminus L \, ) 
$ 
and       
$\Lcal_\Sigma\varphi=0$ on $\Sigma\setminus L$.
\item 
$\forall p \in L$ 
the function   
$\varphi - \tau_p \log \dbold_p$ is bounded on some deleted neighborhood of $p$ in $\Sigma$. 
\end{enumerate}
\end{definition}

\begin{convention}[The constants $\delta_p$]
\label{con:L}
Given $L$ as in \ref{dLD} 
we assume that for each  $p\in L$ a constant $\delta_p>0$ has been chosen so that the following are satisfied. 
\begin{enumerate}[label={(\roman*)}]
\item 
$\forall p,p'\in L$ with $p\ne p'$ we have 
$D^\Sigma_p(9\delta_p) \cap D^\Sigma_{p'}(9\delta_{p'}) = \emptyset $. 
\item $\min_{p \in L} \delta_p$ is small enough in absolute terms. 
\end{enumerate} 
\end{convention}
Combined with the homogeneity of the Clifford Torus $\Sigma$ \ref{con:L}(ii) ensures that conditions (ii) and (iii) of \cite[Convention 3.8]{LDG} hold.  These conditions assert that $\delta_p$ is small enough in terms of the injectivity radius of the normal exponential map for $\Sigma$ in $N$ at $p$, and that the Dirichlet problem for $\Lcal_\Sigma$ is uniquely solvable on each $D_p(\delta_p)$, and are trivially verified in the present case where $\Sigma$ is the Clifford torus. 

\begin{assumption}[Obstruction spaces] 
\label{aK}
Given $L$ as in \ref{dLD} we assume we have chosen a subspace $\skernelv[L] =  \bigoplus_{p\in L}\skernelv[p] \subset C^\infty(\Sigma)$ satisfying the following, 
where the map 
$\Ecal_L : \skernelv[L] \rightarrow \val[L]$ (recall \ref{DVcal}) is defined by  
$\Ecal_L(v) := \bigoplus_{p\in L} \Ecalunder_p v$. 
\begin{enumerate}[label = {(\roman*)}]
\item The functions in $\skernelv[p]$ are supported on $D^\Sigma_p(4 \delta_p)$.
\item The functions in $\skernel[p]$, where $\skernel[p]: = \Lcal_\Sigma \skernelv[p]$, are supported on $D^\Sigma_p(4 \delta_p) \setminus D^{\Sigma}_p(\delta_p/4)$. 
\item 
$\Ecal_L : \skernelv[L] \rightarrow \val[L]$ is a linear isomorphism.  
\item  $\left\| \Ecal^{-1}_L \right\| \le C \delta_{\min}^{-2-\beta}$, 
where 
$\delta_{\min}:=\min_{p\in L}\delta_p$ 
and 
$\left\| \Ecal^{-1}_L \right\|$ is the operator norm of $\Ecal^{-1}_L  : \val[L] \rightarrow  \skernelv[L]$ 
with respect to the $C^{2, \beta}\left( \Sigma, g\right)$ norm  on the target and the maximum  norm on the domain subject to the metric $g$ on $\Sigma$. 
\item 
$\forall \kappaunderbold = (\kappaunder_p)_{p\in L} \in \val[L]$ we have for each $p\in L$
\[ 
\| \kappaunder_p \circ (\exp^\Sigma_p)^{-1} - \Ecal^{-1}_L\kappaunder_p : C^k( D^\Sigma_p(\delta_p), \dbold^\Sigma_p, g, (\dbold^\Sigma_p)^2)\| \le 
C(k) \, |\kappaunder_p| . 
\] 
\end{enumerate}
\end{assumption}

\begin{convention}[Uniformity of LD solutions] 
\label{con:one}
We assume given $\varphi$, $L$, and $\taubold$ as in \ref{dLD} with $\tau_p>0$ $\forall p\in L$, 
and $\delta_p$'s as in \ref{con:L}, 
satisfying the following with $\alpha$ as in \ref{con:alpha} and 
\begin{equation} 
\label{Ddeltaprime}
\begin{gathered} 
\tau_{\min}:=\min_{p\in L}\tau_p, 
\\ 
\delta_p':=\tau_p^\gammagl \quad (\forall p\in L),  
\end{gathered} 
\qquad  \qquad  
\begin{gathered} 
\tau_{\max}:=\max_{p\in L}\tau_p, 
\\ 
\delta_{\min}':=\min_{p\in L}\delta_p'=\tau_{\min}^\gammagl. 
\end{gathered} 
\end{equation} 
\begin{enumerate}[label=(\roman*)]
\item 
\label{con:one:i} 
\ref{con:L} holds and---in accordance with \ref{con:alpha}---$\tau_{\max}$ is as small as needed in terms of $\alpha$ only.  
\item
$\forall p\in L$ we have $9 \delta_p' = 9 \tau_p^\gammagl < \tau_p^{\alpha/100} < \delta_p \, $. 
\item $\tau_{\max}\le \tau_{\min}^{1-\gammagl/100}$.  
\item $\forall p\in L$ we have $(\delta_p)^{-2} \| \, \varphi : C^{2,\beta}(\, \partial D^\Sigma_p(\delta_p)    ,\, g\,)\,\| \le\tau_p^{1-\gammagl/9}$. 
\item $\| \varphi:C^{3,\beta} ( \, \Sigma  \setminus\disjun_{q\in L}D^\Sigma_q(\delta_q')    \, , \, g \, ) \, \|
\le
\tau_{\min}^{8/9} \, $.
\item
On $\Sigma\setminus\disjun_{q\in L}D^\Sigma_q(\delta_q') $ we have $\tau_{\max}^{1+\alpha/5} \le \varphi$.  
\end{enumerate}
\end{convention}

\begin{assumption}[Families of LD solutions] 
\label{ALDfam}
We assume \ref{cLker} holds and that we are given continuous families of the following parametrized by $\zetabold\in \domzb \subset \Pcal$, 
where $\Pcal$ is a finite dimensional vector space and $\domzb \subset \Pcal$ a convex compact subset containing the origin $\zerobold$. 
\begin{enumerate}[label=(\roman*)]
\item 
\label{Idiffeo} 
Diffeomorphisms $\Fcal^\Sigma_\zetabold : \Sigma \rightarrow \Sigma$ with $\Fcal^\Sigma_\zerobold$ the identity on $\Sigma$.  
\item 
Finite sets $L=L\llbracket \zetabold\rrbracket=\Fcal^\Sigma_\zetabold L\llbracket\zerobold\rrbracket \subset\Sigma$ of cardinality    
$|L|=|L\llbracket \zetabold\rrbracket|= |L\llbracket\zerobold\rrbracket|$.     
\item 
Configurations $\taubold=\taubold\llbracket\zetabold\rrbracket: L\llbracket\zetabold\rrbracket\to \R_+$.  
\item 
LD solutions $\varphi =\varphi \llbracket \zetabold \rrbracket$ as in \ref{dLD}  
of singular set 
$L=L\llbracket \zetabold\rrbracket$     
and configuration 
$\taubold=\taubold\llbracket\zetabold\rrbracket$.   
\item 
For each $L=L\llbracket \zetabold\rrbracket$  
constants $\delta_p=\delta_p\llbracket\zetabold\rrbracket$ as in \ref{con:L}.  
\item 
For each $L=L\llbracket \zetabold\rrbracket$  
a space $\skernelv \llbracket \zetabold \rrbracket = \skernelv[ \, L\llbracket \zetabold \rrbracket \, ]$ as in \ref{aK}. 
\item 
Linear isomorphisms $Z_{\zetabold}: \val\llbracket \zetabold\rrbracket \rightarrow \Pcal$ where 
$\val\llbracket \zetabold\rrbracket := \val [\, L \llbracket \zetabold \rrbracket \, ]$ (recall \ref{DVcal}).  
\end{enumerate} 
Moreover we assume the following are satisfied   
$\forall \zetabold\in \domzb$.      
\begin{enumerate}[label=\emph{(\alph*)}]
\item 
\label{Aa} 
$\| \Fcal^\Sigma_\zetabold : C^4 \| \le C$ where the norm is defined with respect to some atlas of $\Sigma$ and the constant $C$ depends only on the background $(\Sigma,N,g)$. 
\item 
\label{Ab} 
$\forall p\in L\llbracket\zerobold\rrbracket$ we have $\Fcal^\Sigma_\zetabold( D^\Sigma_p(3\delta_p)) = D^\Sigma_{q}(3\delta_q)$ with $q:=\Fcal^\Sigma_\zetabold(p)$. 
\item 
\label{Ac} 
$\varphi =\varphi \llbracket \zetabold \rrbracket$,  
$L=L\llbracket \zetabold\rrbracket$, and 
$\taubold=\taubold\llbracket\zetabold\rrbracket$ 
satisfy \ref{con:one}, including the smallness of $\tau_{\max}$ in \ref{con:one}\ref{con:one:i}
which is now in terms of the constant $C$ in \ref{Aa} as well.   
\item 
\label{Atau}
$\forall p\in L\llbracket\zerobold\rrbracket$ we have the uniformity condition $\tau^2_q \leq \tau_p \leq \tau_q^{1/2}$, 
where here $\tau_p$ denotes the value of $\taubold\llbracket\zerobold\rrbracket$ at $p$ 
and $\tau_q$ the value of $\taubold\llbracket \zetabold\rrbracket$ at $q = \Fcal^\Sigma_\zetabold(p)\in L\llbracket \zetabold \rrbracket$. 
\item 
\label{AZ}
$\zetabold - Z_{\zetabold} ( \Mcal_{L\llbracket\zetabold\rrbracket} \varphi\llbracket \zetabold\rrbracket) \in \frac{1}{2} \domzb$ (prescribed unbalancing). 
\end{enumerate} 
\end{assumption}

We are now ready to state the main existence result from \cite{LDG}. 

\begin{theorem}[Theorem A, \cite{LDG}] 
\label{Ttheory}
Assuming that 
\ref{ALDfam} holds,  
there is a smooth closed embedded minimal surface $\Mhat$ doubling $\Sigma$ in $N$ satisfying   
\begin{equation} 
\begin{gathered} 
\label{EMguO}
\Mhat = \graph^{N, g}_{\Omegahat}( \ubreve^+) \cup \graph^{N, g}_{\Omegahat} (-\ubreve^-),  
\\ 
\quad \text{where} \quad 
\Omegahat = \Pi_\Sigma(\Mhat) = \Sigma \setminus \textstyle{ \bigsqcup_{p \in L} \Dbreve_{p} }, 
\quad 
L = L\llbracket \zetaboldhat\rrbracket, 
\\
\text{ and } \quad 
D^\Sigma_p(\tau_p(1-\tau_p^{8/9})) \subset \Dbreve_p \subset D^\Sigma_p(\tau_p(1+\tau^{8/9}_p)) \quad \forall p\in L. 
\end{gathered} 
\end{equation} 
Moreover 
$\Mhat $ has genus $2g_\Sigma-1+|L|$ (where $g_\Sigma$ is the genus of $\Sigma$) and its area $|\Mhat|$ satisfies 
\begin{align}
\label{E:Marea} 
| \Mhat | = 2 |\Sigma| - \pi \sum_{p \in L } \tau^2_p\left(1+ O(\, \tau_p^{1/2}|\log \tau_p|\, )\right). 
\end{align}
\end{theorem}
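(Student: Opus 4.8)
The plan is to follow the Linearized Doubling scheme. From the given family of LD solutions $\varphi\llbracket\zetabold\rrbracket$ one first manufactures, for each $\zetabold\in\domzb$ and each $\vunder\in\skernelv\llbracket\zetabold\rrbracket$, an \emph{initial surface} $M=M[\zetabold,\vunder]$: away from the singular set $L=L\llbracket\zetabold\rrbracket$ it is the union of the two normal graphs over $\Omegahat=\Sigma\setminus\bigsqcup_{p\in L}\Dbreve_p$ of $\varphi\llbracket\zetabold\rrbracket+\vunder$ and of $-(\varphi\llbracket\zetabold\rrbracket+\vunder)$, while near each $p\in L$, where $\varphi$ has a logarithmic singularity of strength $\tau_p$, one inserts a rescaled and correctly positioned truncated catenoid, the \emph{catenoidal bridge}, whose waist radius is comparable to $\tau_p$. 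One matches catenoid and graph on an annular transition region around $p$ well inside $D^\Sigma_p(\delta_p)$, using the asymptotics $\varphi\approx\tau_p\log\dbold_p$ together with the known expansion of the catenoid; \ref{con:one}(ii),(iv) guarantee that the transition annuli are admissibly placed and that $\varphi$ and its derivatives there are small, so that the mean curvature $H$ of $M$ is pointwise small in appropriate $\tau$-weighted H\"older norms, with its dominant part supported in the transition annuli. This construction directly yields the structure in \eqref{EMguO}, including $D^\Sigma_p(\tau_p(1-\tau_p^{8/9}))\subset\Dbreve_p\subset D^\Sigma_p(\tau_p(1+\tau_p^{8/9}))$.

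Next comes the linear analysis, which is the heart of the argument. Writing the minimality equation for the normal graph of a function $u$ over $M$ as $H(u)=0$, its linearization is $\Lcal_\Sigma$ plus lower-order corrections on the graphical part and a catenoid Jacobi operator on each bridge. The approximate cokernel is generated by the mismatch modes at the bridges, which is precisely the role of the obstruction spaces $\skernelv[p]$ and the evaluation map $\Ecal_L:\skernelv[L]\to\val[L]$ from \ref{aK}. Using \ref{cLker} (non-degeneracy of $\Lcal_\Sigma$ on $\group^L_\sym$-symmetric functions) together with \ref{aK}, one builds a right inverse for $\Lcal_\Sigma$ modulo $\skernel[L]$ between suitable $\tau$-weighted H\"older spaces, uniformly in $\zetabold\in\domzb$, with operator norm controlled by a negative power of $\delta_{\min}$ as in \ref{aK}(iv); the support conditions \ref{aK}(i),(ii) and the linearity of $\Ecal_L$ allow one to solve the global Jacobi equation with prescribed mismatch at the points of $L$. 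The uniformity hypotheses \ref{Aa}--\ref{Atau} in \ref{ALDfam} are what make these estimates hold simultaneously over the family.

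With the right inverse in hand, the nonlinear problem is recast as a fixed-point equation $u=\Rcal\big(H(0)+Q(u)\big)$ modulo the obstruction space, where $H(0)$ is the mean curvature of $M$ itself and $Q$ collects the quadratic and higher-order terms; contraction on a small weighted ball follows from the smallness of $\tau_{\max}$ forced by \ref{con:one}\ref{con:one:i}. This produces, for each $\zetabold$, an exactly minimal perturbation of $M[\zetabold,\vunder(\zetabold)]$ up to an element of $\skernel[L]$, i.e.\ up to a residual mismatch recorded by a vector in $\val[L]$, equivalently by $\Mcal_L\varphi\llbracket\zetabold\rrbracket$; in the applications the symmetries kill all horizontal components, so only the ``vertical'' matching equations remain. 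Transporting this residual to $\Pcal$ via $Z_\zetabold$ gives a continuous self-map of $\domzb$, and the prescribed-unbalancing hypothesis \ref{AZ} — that $\zetabold\mapsto\zetabold-Z_\zetabold(\Mcal_L\varphi\llbracket\zetabold\rrbracket)$ sends $\domzb$ into $\tfrac12\domzb$ — is exactly the input for a Brouwer fixed-point argument yielding a parameter $\zetaboldhat\in\domzb$ for which all matching conditions are satisfied; the corresponding surface $\Mhat$ is then smooth, closed and minimal.

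It remains to record the geometric conclusions. $\Mhat$ is embedded because it is a small normal perturbation of an initial surface whose two graphical sheets are disjoint — by positivity of $\varphi$ on $\Omegahat$, \ref{con:one}(vi) — and whose bridges are pairwise disjoint by \ref{con:L}(i). Topologically $\Mhat$ is two copies of $\Sigma$, each with $|L|$ open disks removed, glued along $|L|$ annuli, so $\chi(\Mhat)=2\chi(\Sigma)-2|L|$, giving genus $2g_\Sigma-1+|L|$. The area expansion \eqref{E:Marea} follows by adding up the contribution of each region: the graphical part is $2|\Sigma|$ minus the areas of the $2|L|$ removed disks, each equal to $\pi\tau_p^2$ up to higher order, while the bridges and transition annuli supply the stated $O(\tau_p^{1/2}|\log\tau_p|)$ corrections. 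The main obstacle throughout is the uniform linear estimate of the second step: constructing a right inverse for the Jacobi-type operator on $M$ with norm growing only polynomially in $1/\tau_{\min}$, in function spaces adapted at once to the nearly flat graphical region and to the small, highly curved catenoidal bridges, and uniformly over the parameter family $\domzb$; once this is in place, the contraction mapping, the Brouwer argument, and the geometric conclusions are comparatively routine.
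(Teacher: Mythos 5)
This theorem is not proved in the present paper; it is quoted verbatim as \cite[Theorem A]{LDG}, and the paper uses it as a black box. There is therefore no ``paper's own proof'' here to compare against; the relevant comparison is with the LD methodology as the paper sketches it in the introduction and invokes it in \S\ref{Scliff}. Measured against that, your outline is a faithful, high-level rendering of the three-step scheme: build the initial surface from the LD solution and obstruction element, set up the uniform right inverse modulo $\skernel[L]$ between weighted H\"older spaces using \ref{cLker} and \ref{aK}, run a contraction to absorb the quadratic error modulo the obstruction, and close the loop with a Brouwer fixed point on $\domzb$ via the prescribed-unbalancing hypothesis \ref{AZ}. The Euler characteristic bookkeeping for the genus claim is also correct.

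One inaccuracy worth flagging: your account of the area expansion does not reproduce the stated leading coefficient. Removing $2|L|$ disks of radius $\approx\tau_p$ from two copies of $\Sigma$ subtracts $2\pi\sum_p\tau_p^2$ at leading order, yet \eqref{E:Marea} asserts a deficit of only $\pi\sum_p\tau_p^2$. The missing $\pi\sum_p\tau_p^2$ is returned by the catenoidal necks themselves (together with the tilt of the graphs near the bridges), which contribute at leading order, not merely at the $O(\tau_p^{5/2}|\log\tau_p|)$ level as your phrasing suggests. This is not a gap in the proof architecture, but the area bookkeeping as written would yield the wrong coefficient. Everything else is consistent with the [LDG] scheme as the paper describes it.
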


\section{New Doublings of the Clifford Torus}
\label{Scliff}

\subsection*{Torus knots and symmetries}
\phantom{ab}

Let $\T: = \{(z_1, z_2) \in \C^2 : |z_1|=|z_2| = 1/\sqrt{2}\} \subset \Sph^3 \subset \C$ be the Clifford torus in the unit three-sphere $(\Sph^3, g)$.

\begin{definition}[Torus knots]
\label{dknot} 
Given $v=(a,b) \in \N \times \N$, define $\gamma = \gamma[v] \subset \T$ by
\begin{align}
\label{Eknot}
\gamma = \{ \frac{1}{\sqrt{2}} (e^{ia t}, e^{ibt}) : t \in \R\} 
= \{ \Tcap_{tv} p_0 : t \in \R\}, 
\quad
\text{where} 
\quad
p_0 = \frac{1}{\sqrt{2}} (1,1).
\end{align}
\end{definition}

\begin{assumption}
\label{Aab}
We will assume that the greatest common divisor of $a$ and $b$ is $1$. 
\end{assumption}

\begin{lemma}[Properties of $\gamma$]
\label{Lgammad}
The following hold.
\begin{enumerate}[label = \emph{(\roman*)} ]
\item The maximum value attained by $2\dbold^{\T}_\gamma$ is $\pi\sqrt{2}/|v|$; equivalently, the injectivity radius of the normal exponential map of $\gamma$ is $r_v:= \pi /( \sqrt{2} |v|)$.
\item The length of $\gamma$ is $|\gamma| = \pi \sqrt{2} |v|$.
\item $\group^\gamma_{\sym} = \langle  \Rcapunder_{p, \nu(p)}, \Rcapunder_{\gamma} | \, p \in \gamma \rangle$; moreover $\Rcapunder_{\gamma} = \id_{\Sph^3}$ if and only if $|v|>\sqrt{2}$. 
\end{enumerate}
\end{lemma}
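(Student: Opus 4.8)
The plan is to put $(\T,g)$ in its flat model, reduce parts (i) and (ii) to elementary lattice geometry, and prove (iii) via the restriction homomorphism $\group^\gamma_{\sym}\to\mathrm{Isom}(\gamma)$. First I would record the flat model: parametrizing $\T$ by $(\theta_1,\theta_2)\mapsto\frac1{\sqrt2}(e^{i\theta_1},e^{i\theta_2})$, a one‑line computation gives induced metric $\tfrac12(d\theta_1^2+d\theta_2^2)$, so $x_j:=\theta_j/\sqrt2$ realizes an isometry $(\T,g)\cong\R^2/\Lambda$, $\Lambda:=\sqrt2\,\pi\,\Z^2$, with the Euclidean metric, under which $\gamma[v]$ (parametrized by $\gamma(t)=\Tcap_{tv}p_0$) becomes the line $\ell_0:=\R(a,b)$ through the origin.

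For (ii): since $\gcd(a,b)=1$, the shortest vector of $\Lambda$ lying on $\ell_0$ is $\sqrt2\,\pi\,(a,b)$, of length $\sqrt2\,\pi|v|$, which is the period, so $|\gamma|=\sqrt2\,\pi|v|$. For (i): the preimage of $\gamma$ in $\R^2$ is $\ell_0+\Lambda$, a family of parallel lines; using $\gcd(a,b)=1$ one checks the distinct lines are indexed by $j\in\Z$ via their signed perpendicular offset $\tfrac{\sqrt2\,\pi}{|v|}\,j$ from $\ell_0$, hence are equally spaced with gap $\tfrac{\sqrt2\,\pi}{|v|}=2r_v$. Therefore $\dbold^{\T}_\gamma$ equals the distance from $\tfrac{\sqrt2\,\pi}{|v|}\Z$ of the perpendicular coordinate, whose maximum is $r_v$ (so $\max 2\dbold^{\T}_\gamma=\sqrt2\,\pi/|v|$); and the open tube of radius $r$ about $\gamma$ lifts to a disjoint union of slabs, hence is an embedded product $\gamma\times(-r,r)$ under the normal exponential map, exactly when $r\le r_v$, so the normal injectivity radius is $r_v$.

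For (iii): every $g\in\group^\gamma_{\sym}=\Stab_{O(4)}(\gamma)$ restricts to an arc‑length isometry of $\gamma$, giving a homomorphism $\rho:\group^\gamma_{\sym}\to\mathrm{Isom}(\gamma)\cong O(2)$. The inclusion ``$\supseteq$'' is a direct check: in the angle coordinates above $\Rcapunder_{p,\nu(p)}$ restricts on $\T$ to the rotation by $\pi$ about $p$ (the geodesic reflection at $p$), which preserves the geodesic $\gamma$ whenever $p\in\gamma$, while $\Rcapunder_\gamma$ fixes $\Span(\gamma)\supseteq\gamma$ pointwise. Next I would note $\Span(\gamma)=\R^4$ iff $\cos at,\sin at,\cos bt,\sin bt$ are linearly independent, iff $a\neq b$, iff (given $\gcd(a,b)=1$) $|v|>\sqrt2$; since $\Rcapunder_\gamma=\Pi_{\Span(\gamma)}-\Pi_{\Span(\gamma)^\perp}$ equals $\id_{\Sph^3}$ iff $\Span(\gamma)^\perp=\{0\}$, this gives the ``moreover''. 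Assuming $|v|>\sqrt2$, so $\Span(\gamma)=\R^4$, any $g$ fixing $\gamma$ pointwise fixes all of $\R^4$, so $\rho$ is \emph{injective}. On the other hand $\rho$ is already surjective on $G:=\langle\Rcapunder_{p,\nu(p)}:p\in\gamma\rangle$: the composition $\Rcapunder_{\gamma(s),\nu(\gamma(s))}\circ\Rcapunder_{p_0,\nu(p_0)}$ restricts on $\gamma$ to $\gamma(t)\mapsto\gamma(t+2s)$, yielding every rotation as $s$ varies (indeed, by injectivity of $\rho$ this composition equals $\Tcap_{2sv}\in\group^\gamma_\sym$), while $\Rcapunder_{p_0,\nu(p_0)}$ restricts to a reflection. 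Since $\rho$ is injective and $\mathrm{Isom}(\gamma)=\rho(G)\subseteq\rho(\group^\gamma_{\sym})\subseteq\mathrm{Isom}(\gamma)$, every $g\in\group^\gamma_{\sym}$ coincides with an element of $G$, i.e. $G=\group^\gamma_{\sym}$; and in this regime $\Rcapunder_\gamma=\id_{\Sph^3}\in G$, so $G=\langle\Rcapunder_{p,\nu(p)},\Rcapunder_\gamma:p\in\gamma\rangle$ as claimed. The remaining case $(a,b)=(1,1)$, where $\gamma$ is a great circle and $\Rcapunder_\gamma\neq\id_{\Sph^3}$, I would treat separately by inspecting directly the elements of $O(4)$ stabilizing a great circle.

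The lemma is elementary; the only steps needing genuine care are the injectivity of $\rho$, which rests on $\gamma$ spanning all of $\R^4$ — precisely where the hypothesis $\gcd(a,b)=1$ together with $|v|>\sqrt2$ enters — and the short computation identifying $\Rcapunder_{p,\nu(p)}|_\T$ with the geodesic reflection at $p$ (and hence its compatibility with $\gamma$). I would also be careful in part (i) that the slab decomposition genuinely exhibits an embedded product tubular neighbourhood, so that the normal injectivity radius is pinned down as $r_v$ and not as the full line spacing $2r_v$.
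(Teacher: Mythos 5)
Your treatment of (i) and (ii) via the flat model $\T\cong\R^2/\sqrt2\pi\Z^2$ is exactly the paper's approach (which defers to ``elementary trigonometry''), and your lattice computations are correct. For (iii) in the main regime $|v|>\sqrt2$, your argument is essentially the same as the paper's, just packaged more cleanly: both inclusions hinge on the fact that an element of $O(4)$ fixing $\gamma$ pointwise must be the identity because $\gamma$ spans $\R^4$. The paper phrases this as ``$\gbold\gbold'$ is the identity on $\T$, and an isometry of $\Sph^3$ fixing $\T$ pointwise is the identity,'' while you phrase it as injectivity of the restriction homomorphism $\rho$; the content is identical, and your version makes the role of $\Span(\gamma)=\R^4$ more transparent.

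Two points deserve attention. First, your intermediate equivalence ``$\Span(\gamma)=\R^4$ iff $a\neq b$'' is only correct when $a,b\geq1$: for $(a,b)=(1,0)$ one has $a\neq b$ but $\Span(\gamma)$ is $3$-dimensional. The end equivalence with $|v|>\sqrt2$ is right; just don't route it through $a\neq b$ without noting the $a,b\geq1$ hypothesis. Second, and more substantially, the ``remaining case'' $(a,b)=(1,1)$ is not merely a case you would ``treat separately'': if you actually carry out the inspection you propose, you will find the stated equality \emph{fails} there. For a great circle $\gamma\subset V:=\Span(\gamma)$ (a $2$-plane), the full $O(4)$-stabilizer is $O(V)\times O(V^\perp)\cong O(2)\times O(2)$, a $2$-dimensional group, whereas $\langle \Rcapunder_{p,\nu(p)},\Rcapunder_\gamma\,|\,p\in\gamma\rangle$ is only the $1$-dimensional ``anti-diagonal'' subgroup together with $(\id_V,-\id_{V^\perp})$: the elements $(\id_V,R_\theta)$ with $\theta\notin\{0,\pi\}$ fix $\gamma$ pointwise, lie in $\group^\gamma_\sym$, but are not generated (and a direct check shows they do not even preserve $\T$). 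This is a genuine gap in the lemma statement itself, and the paper's proof also silently breaks at ``there is $\gbold'\in G$ such that $\gbold\gbold'$ preserves the directions normal to $\gamma$'' in this case; it is harmless for the paper because the constructions exclude $(1,1,k)$, but you should not present $(1,1)$ as a routine leftover case.
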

\begin{proof}
Items (i) and (ii) follow from \eqref{Eknot}, \ref{Aab}, and elementary trigonometry, so we omit the details. 

 For (iii), note first that $\gamma$ lies fully in a $2$-dimensional subspace of $\R^4$ if $|v| = \sqrt{2}$, and lies fully in $\R^4$ if $|v|> \sqrt{2}$, which implies the second statement in (iii).  Now let $p \in \gamma$, and recall from \ref{dref} that $\Rcapunder_{p, \nu(p)}$ is the element of $O(4)$ which restricts to the identity on $\mathrm{Span}(p, \nu(p))$ and minus the identity on the orthogonal complement, which may be identified with $T_p \T$.  From this, it is easy to see that $\Rcapunder_{p, \nu(p)}(\T) = \T$ and that $\Rcapunder_{p, \nu(p)}$ restricts to the reflectional isometry of the square torus $\T$ through the point $p$.  In particular, $\Rcapunder_{p, \nu(p)}(\gamma) = \gamma$.
 
Denoting by $G$ the group $\langle  \Rcapunder_{p, \nu(p)}, \Rcapunder_{\gamma} | \, p \in \gamma \rangle$, the preceding shows that $G \leq \group^\gamma_{\sym}$, and it is easy to see that $G$ acts transitively on $\gamma$.

Now let $\gbold \in \group^\gamma_\sym$ be arbitrary.  By the preceding, there is an element $\gbold' \in G$ such that $\gbold \gbold'$ fixes $p$, preserves the orientation of $\gamma$, and preserves the directions normal to $\gamma$.  It follows that $\gbold \gbold'$ is the identity on $\T$.  Finally, since any isometry of $\Sph^3$ which fixes $\T$ pointwise must be the identity, it follows that $\gbold \gbold'$ is the identity, hence $\group^\gamma_{\sym} \leq G$.
\end{proof}

\begin{assumption}
\label{Am}
We fix $k, m \in \N$ and assume $m$ is as large as needed in terms of $k$ and $v$. 
\end{assumption}
\begin{figure}[h]
	\centering
	\begin{tikzpicture}[scale=.8]
	 \draw[thin,dotted] (0,0) grid (6,6);
	\draw[thin] (0, 0) rectangle (6,6); 
	\draw[thick] (0, 0) to (6, 4); 
	\draw[thick] (3, 0) to (6, 2); 
	\draw[thick] (0, 2) to (6,6);
	\draw[thick] (0, 4) to (3, 6);
	\draw (0, 0) node[circle,  left]{$p_0$}; 
	\draw (1.5, 1) node[circle, below ]{$p_1$}; 
	\draw (0, 1) node[circle,  left]{$p_2$};
	\filldraw[color=black](0, 0) circle (3pt);
	\draw[](1.5, 1) circle (3pt);
	\draw[](0, 1) circle (3pt);
	\filldraw[](3, 2) circle (3pt);
	\filldraw[](6, 4) circle (3pt);
	\filldraw[](0, 4) circle (3pt);
	\filldraw[](3, 6) circle (3pt);
	\filldraw[](3, 0) circle (3pt);
	\filldraw[](6, 2) circle (3pt);
	\filldraw[](0, 2) circle (3pt);
	\filldraw[](3, 4) circle (3pt);
	\filldraw[](6, 6) circle (3pt);
	\filldraw[](0, 0) circle (3pt);
	\filldraw[](0, 6) circle (3pt);
	\filldraw[](6, 0) circle (3pt);
	\draw[](3, 1) circle (3pt);
	\draw[](4.5, 1) circle (3pt);
	\draw[](6, 1) circle (3pt);
	\draw[](3, 1) circle (3pt);
	\draw[](4.5, 1) circle (3pt);
	\draw[](6, 1) circle (3pt);
	\draw[](0, 3) circle (3pt);
	\draw[](1.5, 3) circle (3pt);
	\draw[](3, 3) circle (3pt);
	\draw[](4.5, 3) circle (3pt);
	\draw[](6, 3) circle (3pt);
	\draw[](0, 5) circle (3pt);
	\draw[](1.5,5) circle (3pt);
	\draw[](3, 5) circle (3pt);
	\draw[](4.5, 5) circle (3pt);
	\draw[](6,5) circle (3pt);
	\draw[](1.5, 0) circle (3pt);
	\draw[](4.5,0) circle (3pt);
	\draw[](1.5, 2) circle (3pt);
	\draw[](4.5,2) circle (3pt);
	\draw[](1.5, 4) circle (3pt);
	\draw[](4.5,4) circle (3pt);
	\draw[](1.5, 6) circle (3pt);
	\draw[](4.5,6) circle (3pt);
	\end{tikzpicture}
	\caption{A depiction of a torus knot $\gamma$ and singular set $L$ as in \ref{dL} with $v = (2, 3)$, $k=1$ and $m = 6$.  The points of $L$ are denoted by black circles, and points $p$ of $\T \setminus L$ for which the reflection $\Rcapunder_{p, \nu(p)}$ about $p$ preserves $L$ are denoted by white circles. }
		\label{Fim0}
\end{figure}

With $p_0 \in \T$ as in \eqref{Eknot}, define a collection $L_0 \subset \gamma$ of $m$ equally spaced points on $\gamma$ by
\begin{align*}
L^0 = \{ \Tcap_{\frac{2\pi jv}{m}} p_0 : j=0, \dots, m-1 \}.
\end{align*}
Next, the set $\{ w' \in \R^2 \setminus \mathrm{Span}(v) : \Tcap_{w'} p_0 \in L^0\}$ is clearly nonempty and discrete, and hence contains an element $w$ with minimal length.  Now define a subset $L \subset \T$ of $km$ points, distributed on $k$ parallel copies of $L^0$,  points $p_1, p_2 \in \T$, and a symmetry group $\group$ by 
\begin{equation}
\label{dL}
\begin{gathered}
L = L[k,m,v] := \bigcup_{j =0}^{k-1} L^0, 
\quad
\text{where for }
j\geq 1,
\quad
L^j := \Tcap_{\frac{jw}{k}} L^0, 
\\
p_1 = \Tcap_{\frac{\pi v}{m}} p_0, 
\quad
p_2 = \Tcap_{\frac{w}{2k}} p_0, 
\quad
\group = \group[k, m, v]: = \langle \Rcapunder_{p_i, \nu(p_i)} |\,  i =1,2, 3\rangle.
\end{gathered}
\end{equation}
Clearly $L \subset \Lpar$, where $\Lpar \subset \T$ is a union of $k$ copies of $\gamma$ defined by
\begin{align}
\label{ELpar}
\Lpar = \bigcup_{j=0}^{k-1} \Tcap_{\frac{jw}{k}} \gamma.
\end{align}
Note by \ref{dT} and \ref{Lgammad}(ii) that the distance between adjacent components of $\Lpar$ is
 $2 r_v/k = \pi \sqrt{2}/ k|v|$.

\begin{lemma}[Properties of $\group^L_{\sym}$]
\label{Lgroup}
The following hold. 
\begin{enumerate}[label = \emph{(\roman*)} ]
\item For each $p\in L$, $\group^{L}_{\sym}$ contains the reflection $\Rcapunder_{p, \nu(p)}$ about the point $p$.
\item $\group^L_{\sym}$ acts transitively on $L$. 
\item $\group^L_{\sym} = \group$ if $|v| > \sqrt{2}$. 
\end{enumerate}
\end{lemma}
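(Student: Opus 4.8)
The plan is to establish the inclusion $\group\leq\group^L_\sym$ first, deduce (i) and (ii) formally from transitivity of $\group$ on $L$, and then prove the reverse inclusion required for (iii) by controlling the stabilizer of $p_0$. For $\group\leq\group^L_\sym$ I would record, exactly as in the proof of \ref{Lgammad}(iii), that each generating reflection $\Rcapunder_{p_i,\nu(p_i)}$ restricts to the point reflection of the flat square torus $\T$ through $p_i$; in angular coordinates $\theta$ on $\T$ normalized so that $p_0\leftrightarrow\zerobold$, this is the map $\theta\mapsto 2\theta_{p_i}-\theta$ on $\R^2/2\pi\Z^2$. From \eqref{Eknot}--\eqref{dL} one checks that $L^0$ is the cyclic subgroup of the abelian group $\T$ generated by $\Tcap_{2\pi v/m}p_0$, and that $L$ is the finite subgroup generated by $\Tcap_{2\pi v/m}p_0$ and $\Tcap_{w/k}p_0$. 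Since a point reflection $\theta\mapsto 2\theta_q-\theta$ of $\T$ preserves a subgroup containing $p_0$ exactly when $2\theta_q$ belongs to it, and $2\theta_{p_0}=\zerobold$, $2\theta_{p_1}=\frac{2\pi v}{m}$, $2\theta_{p_2}=\frac wk$ are all angular coordinates of points of $L$, each generator preserves $L$, whence $\group\leq\group^L_\sym$. Composing pairs of generators exhibits the translations $\Tcap_{2\pi v/m}=\Rcapunder_{p_1,\nu(p_1)}\Rcapunder_{p_0,\nu(p_0)}$ and $\Tcap_{w/k}=\Rcapunder_{p_2,\nu(p_2)}\Rcapunder_{p_0,\nu(p_0)}$ inside $\group$, and these two translations generate $L$ as a group, so $\group$ already acts transitively on $L$; this is (ii). For (i), given $p\in L$ I would choose a translation $t=\Tcap_c\in\group$ with $t(p_0)=p$; since $\Tcap_c$ is equivariant for the unit normal field of $\T$, so that $t\nu(p_0)=\nu(p)$, the standard conjugation rule for reflections gives $t\,\Rcapunder_{p_0,\nu(p_0)}\,t^{-1}=\Rcapunder_{\Span(tp_0,\,t\nu(p_0))}=\Rcapunder_{p,\nu(p)}$, so $\Rcapunder_{p,\nu(p)}\in\group\leq\group^L_\sym$.

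For (iii), assuming $|v|>\sqrt2$ and $m$ large as in \ref{Am}, I must show $\group^L_\sym\leq\group$; by transitivity it suffices to show that any $g\in\group^L_\sym$ with $g(p_0)=p_0$ lies in $\{\id,\Rcapunder_{p_0,\nu(p_0)}\}$. The argument I have in mind is this. Once $m>k|v|^2$ (using \ref{Lgammad}(i),(ii) together with the inter-component spacing $2r_v/k$ of $\Lpar$), the minimal distance between distinct points of $L$ equals $\pi\sqrt2|v|/m$ and is attained exactly by pairs of consecutive points on a single copy $L^j$; consequently the graph on $L$ joining points at this minimal distance is the disjoint union of the $k$ $m$-cycles $L^0,\dots,L^{k-1}$. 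Every $g\in\group^L_\sym$ is an automorphism of this graph and hence permutes its cycles; if $g$ fixes $p_0$ it must preserve the cycle $L^0$ and act on it as a cycle-automorphism fixing the vertex $p_0$, so $g|_{L^0}$ is either the identity or the reflection of $C_m$ through $p_0$ — and the latter is exactly $\Rcapunder_{p_0,\nu(p_0)}|_{L^0}$. After replacing $g$ by $g\,\Rcapunder_{p_0,\nu(p_0)}$ if necessary, I may assume $g$ fixes $L^0$ pointwise. A short orthogonality (discrete Fourier) computation then shows that when $|v|>\sqrt2$, i.e. $a\neq b$ since $\gcd(a,b)=1$, and $m$ is large, no nonzero vector of $\R^4$ is orthogonal to all of $L^0$; hence $L^0$ spans $\R^4$, an element of $O(4)$ fixing a spanning set is the identity, and so $g=\id$.

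The verification $\group\leq\group^L_\sym$, items (i)--(ii), and the transitivity reduction for (iii) are all essentially formal. The substantive points are the two facts invoked in the argument for (iii) that actually use the hypotheses: that the nearest-neighbour graph of $L$ reproduces exactly the decomposition $L=\bigsqcup_jL^j$ — which breaks down for small $m$, where points on distinct components may be as close as consecutive points on one component — and that $L^0$ spans $\R^4$, which is precisely where $|v|>\sqrt2$ is needed, since for $v=(1,1)$ the knot $\gamma$, and hence $L^0$, lies in a $2$-dimensional subspace. I expect the fussiest part to be pinning down all the relevant distance inequalities under Assumption \ref{Am}, but this is routine trigonometry.
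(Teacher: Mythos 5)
Your proposal is correct, and for items (i)--(ii) together with the reduction of (iii) to the stabilizer of $p_0$ it follows essentially the same outline as the paper: show $\group\le\group^L_\sym$ by checking the generating point-reflections preserve $L$, deduce transitivity and the reflections at all of $L$, and then reduce (iii) by transitivity to showing $\Stab_{\group^L_\sym}(p_0)=\{\id,\Rcapunder_{p_0,\nu(p_0)}\}$. Where you genuinely diverge is in the mechanism for that last step. The paper's proof asserts tersely that, since $|v|>\sqrt2$, one may further arrange that $\gbold\gbold'$ induces the identity on $T_{p}\T$, hence is the identity on $\T$, hence on $\Sph^3$; this mirrors the argument given for \ref{Lgammad}(iii) and implicitly relies on the element preserving $\T$ and on the torus-knot geometry pinning down the normal frame. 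Your version instead shows, via the nearest-neighbour graph of $L$ (which decomposes as $\bigsqcup_j L^j$ once $m>k|v|^2$) that an element fixing $p_0$ fixes $L^0$ as a set and, after composing with $\Rcapunder_{p_0,\nu(p_0)}$, pointwise; you then close the argument by the linear-algebraic observation that $L^0$ spans $\R^4$ when $a\ne b$ and $m$ is large (a character-orthogonality computation), so an element of $O(4)$ fixing it pointwise is the identity. The payoff of your route is that it is entirely elementary and self-contained: it avoids having to know in advance that elements of $\group^L_\sym$ preserve the Clifford torus $\T$, and it makes explicit exactly where both hypotheses ($|v|>\sqrt2$ for spanning, $m$ large for the graph decomposition) are used. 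The cost is the additional bookkeeping of the distance estimates, which, as you note, is routine but must be carried out under \ref{Am}.
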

\begin{proof}
By \ref{dL}, it is easy to see that the reflection $\Rcapunder_{p, \nu(p)}$ of $\T$ about a point $p\in L$ preserves $L$, so (i) follows.  Similarly, it is easy to see $\Rcapunder_{p_i, \nu(p_i)}$ preserves $L$ as a set, which implies $\group \leq \group^L_{\sym}$, and implies (ii).

For (iii), let $\gbold \in \group^{L}_{\sym}$ be arbitrary.  Since $\group$ acts transitively on $L$, it follows that there is an element $\gbold' \in \group$ such that $\gbold \gbold'$ fixes $p$; moreover, since $|v|> \sqrt{2}$, we may further assume $\gbold \gbold'$ induces the identity map on $T_p \T$.  It follows that $\gbold \gbold'$ is the identity on $\T$.  Finally, since any isometry of $\Sph^3$ which fixes $\T$ pointwise must be the identity, it follows that $\gbold \gbold'$ is the identity, hence $\group^L_{\sym} \leq \group$. 
\end{proof}

\begin{remark}
When $|v|\leq \sqrt{2}$, that is, when one component of $v$ is zero or when $|v| = \sqrt{2}$, $\group^L_{\sym}$ clearly contains $\group$ as a proper subgroup and in particular contains the reflection fixing the geodesic joining $p_0$ to $p_i$ for $i=1, 2$.  The constructions from \cite[Theorem 6.17]{LDG} have $|v| = 1$.

Furthermore, when $|v| =  \sqrt{2}$, $\group^L_{\sym}$ may contain the binary dihedral group as a subgroup; Pitts-Rubinstein proposed examples of this form in \cite[Example 11]{PittsR}.
\end{remark}

\begin{notation}
If $X$ is a function space consisting of functions defined on a domain $\Omega \subset \T$ and $\Omega$ is invariant under the action of $\groupL$, we use a subscript ``sym" to denote the subspace $X_{\sym} \subset X$ consisting of those functions $f \in X$ which are invariant under the action of $\groupL$.
\end{notation}

\begin{lemma}
\label{Lker}
The following hold.
\begin{enumerate}[label = \emph{(\roman*)} ]
\item $(\ker \Lcal_\T)_{\sym}$ is trivial, where $\Lcal_\T= \Delta_\T + 4$ is the linearized operator on $\T$. 
\item There is a unique $\group^L_{\sym}$-symmetric LD solution (in the sense of \cite[Definition 3.4]{LDG})  $\Phi = \Phi[L]$ with singular set $L$ and satisfying $\tau_p = 1$ $\forall p \in L$. 
\end{enumerate}
\end{lemma}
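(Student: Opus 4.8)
The plan is to prove both items by standard spectral theory on the flat torus, using the symmetry group $\group^L_{\sym}$ to eliminate the kernel of $\Lcal_\T$ and then invoking a Fredholm/variational argument for existence and uniqueness of the LD solution.

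For (i), recall that the Clifford torus $\T$ is, up to scaling, a flat square torus of a specific size; by Appendix \ref{Stori} and the computation $|A|^2 = 4$, the eigenvalues of $-\Delta_\T$ are known explicitly, and $4$ is indeed among them — its eigenspace is spanned by the four restrictions to $\T$ of the ambient linear coordinate functions on $\R^4$ (equivalently, the lowest nonconstant eigenfunctions, coming from the four ``first'' lattice frequencies). Thus $\ker \Lcal_\T = \Span \Coord(\T)$ is four-dimensional. The task is to show no nonzero element of this space is $\group^L_{\sym}$-invariant. Each coordinate function is of the form $\mathrm{Re}(\bar c \cdot z)$ for $z=(z_1,z_2)$ and $c \in \C^2$; since $\group^L_{\sym}$ contains (by \ref{Lgroup}(i)) the point-reflection $\Rcapunder_{p,\nu(p)}$ at some $p \in L \subset \T$, and this reflection acts on $T_p\T$ as $-\id$ while fixing $p$ and $\nu(p)$, an invariant Jacobi field in $\ker \Lcal_\T$ would have to vanish at $p$ together with its differential at $p$ — but a nonzero element of $\ker\Lcal_\T$, being a linear coordinate restricted to $\T$, cannot have a zero of order $\geq 2$ (its zero set is a great circle met transversally, or it is identically zero). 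Using one more point-reflection at a second point of $L$ not antipodal to the first, or the transitivity in \ref{Lgroup}(ii), forces the function to be identically zero.

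For (ii), with Convention \ref{cLker} now verified by (i), existence and uniqueness of the $\group^L_{\sym}$-symmetric LD solution $\Phi[L]$ with $\tau_p = 1$ is a linear problem: I would construct $\Phi$ as $\Phi = \sum_{p\in L} G_p + \psi$, where $G_p$ is a fixed local model with the prescribed logarithmic singularity at $p$ (e.g. a cutoff of $\log \dbold_p$, symmetrized over the $\group^L_{\sym}$-orbit structure so that the sum is $\group^L_{\sym}$-invariant), and $\psi \in C^\infty_{\sym}(\T)$ is the correction solving $\Lcal_\T \psi = -\Lcal_\T(\sum_p G_p)$, whose right-hand side is a smooth $\group^L_{\sym}$-invariant function supported away from $L$. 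By (i) the operator $\Lcal_\T$ is an isomorphism on the $\group^L_{\sym}$-symmetric Sobolev/Hölder spaces, so $\psi$ exists and is unique; this gives existence. Uniqueness of $\Phi$ follows because the difference of two such LD solutions is a $\group^L_{\sym}$-symmetric solution of $\Lcal_\T(\cdot) = 0$ on $\T \setminus L$ which is bounded near each $p \in L$ (the $\log$ singularities cancel since both have $\tau_p = 1$), hence extends smoothly across $L$ by elliptic removability of point singularities in dimension two, and therefore lies in $(\ker \Lcal_\T)_{\sym} = \{0\}$ by (i).

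The main obstacle is (i): one must be certain that $4$ is an eigenvalue of $-\Delta_\T$ (so that $\ker \Lcal_\T$ is genuinely nontrivial and its structure must be pinned down) and that the explicit eigenfunctions are exactly the coordinate restrictions, then argue carefully that the point-reflection symmetries in $\group^L_{\sym}$ are incompatible with invariance — the cleanest route is the vanishing-to-second-order observation above, which reduces everything to the elementary fact that a linear function on $\R^4$ restricted to $\T$ has only transverse zeros. Everything in (ii) is then routine linear elliptic theory once (i) is in hand.
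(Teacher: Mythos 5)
Your argument for item (i) contains a substantive error in the identification of $\ker\Lcal_\T$, and the vanishing argument built on it breaks down. The coordinate functions on a minimal surface in $\Sph^3$ satisfy $\Delta_\Sigma x_i = -2 x_i$, hence $\Lcal_\T x_i = (\Delta_\T + 4)x_i = 2x_i \neq 0$: the coordinate restrictions are \emph{not} in $\ker\Lcal_\T$. Rather, $\ker\Lcal_\T$ is the eigenspace of $-\Delta_\T$ with eigenvalue $4$, which on the Clifford torus is spanned by $\cos(\theta_1\pm\theta_2), \sin(\theta_1\pm\theta_2)$ (equivalently $\cos 2x, \sin 2x, \cos 2y, \sin 2y$ in geodesic normal coordinates oriented along the two diagonals, as the paper uses). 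These are quadratic, not linear, in the ambient coordinates, so the ``zero set is a great circle met transversally'' picture does not apply.

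The vanishing argument also fails at a more basic level. Invariance of $u$ under the point-reflection $\Rcapunder_{p_0,\nu(p_0)}$, which acts on $T_{p_0}\T$ as $-\id$, forces $d_{p_0}u = 0$ but does \emph{not} force $u(p_0)=0$; the even functions $\cos 2x$ and $\cos 2y$ are invariant under this reflection and equal $1$ at $p_0$. And even a second-order vanishing hypothesis wouldn't pin things down on its own: $\cos 2x - \cos 2y \in \ker\Lcal_\T$ is $\Rcapunder_{p_0,\nu(p_0)}$-invariant and vanishes to second order at $p_0$ without being identically zero. This is precisely why the paper's proof is genuinely two-step: the reflection at $p_0$ eliminates only the odd span $\{\sin 2x,\sin 2y\}$ (via the $L^2$-orthogonality calculation $\int u v = -\int uv$ for symmetric $v$), and one then must repeat at $p_1\neq p_0$ and use the addition formula together with the largeness of $m$ to eliminate the remaining even span $\{\cos 2x,\cos 2y\}$. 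Your mention of a second reflection gestures in the right direction, but the mechanism you give for why it works is not the correct one. Part (ii) as you sketch it is sound in outline — a cutoff of the Green's-function model plus a correction via invertibility of $\Lcal_\T$ on symmetric spaces, with uniqueness via removability of bounded point singularities — and is essentially the content of the cited \cite[Lemma 5.22]{LDG}, which is what the paper invokes directly; but since it rests on (i), it inherits the gap until (i) is repaired.
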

\begin{proof}
Take geodesic normal coordinates $(x,y)$ for $\T$ centered at $p_0 \in L$; in these coordinates the space $\Lcal_\T$ is spanned by the functions $\sin 2 x, \sin 2y, \cos 2x$, and $\cos 2y$.

Denote by $f$ the restriction of $\Rcapunder_{p_0, \nu(p_0)}$ to $\T$ and note that $f(x,y) = (-x,-y)$ in the local coordinate system above.  If $u$ is in the span of $\sin 2x$ and $\sin 2y$, it is not difficult to see that $u \circ f = - u$, hence
\begin{align*}
\int_\T uv = \int_\T (u \circ f) (v \circ f) = - \int_\T u v
\end{align*}
for any $v \in L^2_{\sym}(\T)$. 

Next, repeating the same argument at $p_1 \in L$ shows that the analogous functions in geodesic normal coordinates centered at $p_1$ are orthogonal to $L^2_{\sym}(\T)$.  After using the trigonometric addition formula $\sin (a+b) = \sin a \cos b + \sin b \cos a$ to write these in terms of $\sin 2 x, \sin 2y, \cos 2x$, and $\cos 2y$ and using the largeness of $m$, we conclude that $\cos 2x$ and $\cos 2y$ are orthogonal to $L^2_{\sym}(\T)$.  This completes the proof of (i). 

Item (ii) follows immediately from (i) and \cite[Lemma 5.22]{LDG}.
\end{proof}

\subsection*{Analysis of $\group^L_{\sym}$-symmetric LD Solutions}
\phantom{ab}

By \ref{Lker}, there is a unique $\group^L_{\sym}$-symmetric LD solution $\Phi = \Phi[k, m, v]$ with singular set $L$ and satisfying $\tau_p = 1$ $\forall p \in L$.  We now estimate $\Phi$.

\begin{definition}
\label{dave}
Given a function $\varphi$ defined on some domain $\Omega \subset \T$, we define a function $\varphi_{\ave}$ on the union $\Omega'$ of the parallel curves of $\gamma$ on which $\varphi$ is integrable (whether contained in $\Omega$ or not), by requesting that on each such curve $C'$, 
\begin{align*}
\varphi_{\ave}|_{C'} : = \avg_{C'}\varphi.
\end{align*}
we also define $\varphi_{\osc}$ on $\Omega \cap \Omega'$ by $\varphi_{\osc}: = \varphi - \varphi_{\ave}$.  Finally, we call a function $\varphi$ defined on a domain $\Omega \subset \T$ which is a union of parallel circles of $\gamma$ and depends only on $\dbold_{\Lpar}$ \emph{rotationally invariant}. 
\end{definition}

We next characterize the rotationally invariant part $\Phi_\ave$ of $\Phi$.  The following lemma and its proof generalize \cite[Lemma 6.8]{LDG}, which covered the case where $|v| = 1$. 

\begin{lemma}[The rotationally invariant part $\Phi_\ave$]
\label{Lphiave}
The following hold. 
\begin{enumerate}[label = \emph{(\roman*)} ]
\item $\Phi_\ave = \frac{m}{\sqrt{8} | v| \sin \frac{\sqrt{2}\pi}{k|v|}} \cos ( \sqrt{2} \frac{\pi}{k|v|}- 2 \dbold^\T_{\Lpar})
$.  
\item $\Phi_\ave |_{\Lpar} = \frac{m}{2F}$, where $F: = \sqrt{2} |v|  \tan\frac{\sqrt{2} \pi}{k|v|}$.

\end{enumerate}
\end{lemma}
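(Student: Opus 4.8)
The plan is to reduce the computation of $\Phi_{\ave}$ to an explicitly solvable ODE by averaging the defining equation of $\Phi$ over the parallel copies of $\gamma$.

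\emph{Setting up.} Since $\T$ is flat, I would introduce global flat coordinates $(s,t)$ on $\T$ with $\partial_s$ parallel to $v$, with $p_0$ at the origin, with $s$ periodic of period $|\gamma| = \pi\sqrt{2}\,|v|$ (by \ref{Lgammad}(ii)), and with $t$ periodic of period $2r_v = \tfrac{\sqrt{2}\pi}{|v|}$; in these coordinates the parallel copies of $\gamma$ are the slices $\{t = \mathrm{const}\}$ and $\Lpar = \{t \in \ell\Z\}$, where $\ell := \tfrac{\sqrt2\pi}{k|v|}$ is the spacing of consecutive components of $\Lpar$ (the distance computed in the text after \eqref{ELpar}), and $\dbold^\T_{\Lpar}$ equals the distance from $t$ to $\ell\Z$. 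By construction $\Phi_{\ave}$ is a function of $t$ alone, and I would next observe that it depends only on $\dbold^\T_{\Lpar}$: the point reflection $\Rcapunder_{p_0,\nu(p_0)} \in \group^L_{\sym}$ (see \ref{Lgroup}(i)) acts by $(s,t)\mapsto(-s,-t)$, so $\Phi_{\ave}$ is even about $\{t=0\}$, and $\Rcapunder_{p_2,\nu(p_2)} \in \group^L_{\sym}$ is the point reflection about $p_2$, which lies on the midpoint slice $\{t = \ell/2\}$, so $\Phi_{\ave}$ is also even about $\{t=\ell/2\}$. Together these force $\Phi_{\ave}$ to be $\ell$-periodic, hence a function of $\dbold^\T_{\Lpar} \in [0,\ell/2]$.

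\emph{Averaging the equation.} Away from $L$ we have $\Lcal_\T\Phi = 0$, while near each $p\in L$ the flatness of $\T$ gives $\dbold_p = |\cdot - p|$ and, by the definition of LD solution together with interior elliptic regularity, $\Phi - \log\dbold_p$ extends smoothly across $p$; hence $\Lcal_\T\Phi = 2\pi\sum_{p\in L}\delta_p$ as distributions on $\T$ (using $\tau_p \equiv 1$). Testing this identity against functions of $t$ alone, using $\int\partial_s^2\Phi\,ds = 0$ on each slice, and using that each component of $\Lpar$ carries exactly $m$ points of $L$, yields on $\R/2r_v\Z$
\begin{align*}
\Phi_{\ave}'' + 4\,\Phi_{\ave} \;=\; \frac{2\pi m}{|\gamma|}\sum_{j=0}^{k-1}\delta_{j\ell} \;=\; \frac{\sqrt2\,m}{|v|}\sum_{j=0}^{k-1}\delta_{j\ell},
\end{align*}
with $\Phi_{\ave}$ smooth on each open interval between consecutive components of $\Lpar$.

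\emph{Solving.} On the fundamental interval $[0,\ell/2]$, where $\dbold^\T_{\Lpar} = t$, I would write $\Phi_{\ave} = \alpha\cos 2t + \beta\sin 2t$ and impose $\Phi_{\ave}'(\ell/2) = 0$ (smoothness at the midpoint slice, which meets no point of $L$) and $\Phi_{\ave}'(0^+) = \tfrac12\cdot\tfrac{\sqrt2 m}{|v|} = \tfrac{m}{\sqrt2|v|}$ (half the derivative jump, by evenness about $\Lpar$). This gives $\beta = \tfrac{m}{2\sqrt2\,|v|}$ and $\alpha = \beta\cot\ell$, so $\Phi_{\ave} = \tfrac{\beta}{\sin\ell}\cos(\ell - 2t)$, which is exactly the formula in (i). Evaluating at $\dbold^\T_{\Lpar} = 0$ gives $\Phi_{\ave}|_{\Lpar} = \tfrac{m\cos\ell}{\sqrt8\,|v|\sin\ell} = \tfrac{m}{2F}$ with $F = \sqrt2\,|v|\tan\ell$, which is (ii).

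\emph{Main obstacle.} The delicate point is the averaging step: one must justify that $\Phi_{\ave}$ is genuinely smooth off $\Lpar$ with an honest derivative jump across $\Lpar$, and that the distributional source survives averaging with the correct constant — correctly accounting for the flux $2\pi$ of a logarithmic singularity on a flat surface, the $m$ points on each component of $\Lpar$, the normalization by $|\gamma| = \pi\sqrt2\,|v|$, and the factor $\tfrac12$ coming from evenness. This is exactly the mechanism of \cite[Lemma 6.8]{LDG} (the case $|v|=1$, $k=1$); here only the geometric constants $|\gamma|$ and $\ell$ change, so I would invoke that argument and carry the new constants through.
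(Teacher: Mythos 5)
Your proposal is correct and follows essentially the same approach as the paper's proof: both arguments first pin down the form $\Phi_{\ave}=C\cos\bigl(\sqrt2\pi/(k|v|)-2\dbold^\T_{\Lpar}\bigr)$ from the homogeneous ODE and the symmetries, and then determine $C$ via the flux $2\pi$ of each logarithmic singularity. The paper packages the flux computation as an integration-by-parts identity on the annulus $D^\T_{\Lpar}(\epsilon_2)\setminus D^\T_L(\epsilon_1)$ followed by limits $\epsilon_1\searrow 0$, $\epsilon_2\searrow 0$, whereas you recast the same flux balance as a distributional ODE $\Phi_{\ave}''+4\Phi_{\ave}=\tfrac{\sqrt2 m}{|v|}\sum_j\delta_{j\ell}$ in the averaged variable and solve it; the two are equivalent, and your constants are right.
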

\begin{proof}
Since $\Lcal_\T \Phi_{\ave} = 0$ on $ \T \setminus \gamma$ and the maximum value of $2\dbold^{\T}_{\Lpar}$ is $\sqrt{2}\pi/k|v|$ by \ref{Lgammad}, the symmetries imply that 
$\Phi_\ave = C \cos ( \sqrt{2}\pi/k|v|- 2 \dbold_{\Lpar})$
 for some $C \neq 0$. By integrating $\Lcal_\T \Phi = 0$ on $\Omega_{\epsilon_1, \epsilon_2} : = D^{\T}_{\Lpar}(\epsilon_2)\setminus D^{\T}_{L}(\epsilon_1),$ where $0<\epsilon_1 << \epsilon_2$ and integrating by parts, we obtain
\begin{align*}
\int_{\partial\Omega_{\epsilon_1, \epsilon_2}} \frac{\partial}{\partial \eta} \Phi + \int_{\Omega_{\epsilon_1, \epsilon_2}} 4 \Phi = 0,
\end{align*}
where $\eta$ is the unit outward conormal field along $\partial \Omega_{\epsilon_1, \epsilon_2}$.  By taking the limit as $\epsilon_1 \searrow 0$ and then as $\epsilon_2 \searrow 0$, we obtain by using the logarithmic behavior near $L$, the preceding, and \ref{Lgammad}(ii) that
\begin{align*}
2\pi k m = 4 \pi \sqrt{2} k |v| C \sin \frac{\sqrt{2}\pi}{k|v|},
\end{align*}
which implies the conclusion. 
\end{proof}

For convenience, we define a scaled metric and scaled linear operator by
\begin{align}
\label{Egtilde}
\gtilde = m^2 g, 
\quad
\Lcaltilde_\T = \frac{1}{m^2} \Lcal_\T
= \Delta_{\gtilde} + \frac{4}{m^2}.
\end{align}
We also define $\delta = 1/(100m)$.

\begin{definition}
\label{Dphat}
Define $\Ghat \in C^\infty_{\sym}( \T \setminus L)$ and $\Phat, \Phip, E' \in C^\infty_{\sym}(\T)$ by requesting that
\begin{equation*}
\begin{gathered}
\Ghat = \Psibold[ 2 \delta, 3 \delta; \dbold^\T_p] ( G_p - \log \delta \cos (2 \dbold^\T_{\Lpar}), 0) 
\quad
\text{on} 
\quad
D^\T_L(3 \delta), 
\\
\Phat = \Phi_\ave - \Psibold \left[ \frac{2}{m}, \frac{3}{m}; \dbold^\T_{\Lpar}\right] \left( \frac{m}{\sqrt{8}|v|} \sin ( 2 \dbold^\T_{\Lpar}), 0\right), 
\end{gathered}
\end{equation*}
that $\Ghat = 0$ on $\T \setminus D^\T_L (3 \delta)$, $\Phat = \Phi_\ave$ on $\T \setminus D^\T_{\Lpar}(3/m)$, and
\begin{align}
\label{Ephat}
\Phi = \Ghat + \Phat + \Phip, 
\quad
E' = - \Lcaltilde_{\T}(\Ghat + \Phat). 
\end{align}
\end{definition}

\begin{remark}
Note that from Lemma \ref{Lphiave} and the fact that
\begin{align*}
\cos ( \sqrt{2} \frac{ \pi}{k|v|}- 2 \dbold^\T_\gamma) 
= 
\cos \frac{ \sqrt{2}\pi}{k|v|} \cos 2 \dbold^\T_\gamma
+
\sin \frac{\sqrt{2}\pi}{k|v|} \sin 2 \dbold^\T_\gamma
\end{align*}
that $\Phat$ as defined in \ref{Dphat} is indeed smooth across $\gamma$.
\end{remark}

\begin{lemma}
\label{LPhipest}
$E'$ vanishes on $D^\T_L (2\delta)$ and $E'_\osc$ is supported on $D^\T_{\Lpar}(3 \delta)$.  Moreover:
\begin{enumerate}[label = \emph{(\roman*)} ]
\item $\| \Ghat : C^j_{\sym}(\T \setminus D^\T_L(\delta), \gtilde) \| \leq C(j)$. 
\item $\| E' : C^j_{\sym}(\T, \gtilde)\| \leq C(j)$.
\item $\| \Phip : C^j_{\sym}(\T, \gtilde)\| \leq C(j)$. 
\end{enumerate}
In (ii), the same estimate holds if $E'$ is replaced with either $E'_\ave$ or $E'_\osc$. 
\end{lemma}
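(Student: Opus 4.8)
The plan is to prove Lemma \ref{LPhipest} by unwinding Definition \ref{Dphat} and reducing everything to estimates for the rotationally invariant part $\Phi_\ave$ (known explicitly by Lemma \ref{Lphiave}) together with estimates for the local Green's functions $G_p$ near the singular points. First I would address the two structural claims. The vanishing of $E' = -\Lcaltilde_\T(\Ghat + \Phat)$ on $D^\T_L(2\delta)$: on this region $\Phat = \Phi_\ave$ (since $2\delta < 3/m$), and $\Ghat$ is built precisely so that $\Ghat + \Phi_\ave$ reproduces the logarithmic singularity, i.e. $\Ghat + \Phat$ agrees with $\Phi$ on $D^\T_L(2\delta)$ up to the cutoff gluing region $D^\T_L(3\delta)\setminus D^\T_L(2\delta)$; since $\Lcal_\T \Phi = 0$ away from $L$ and the singular behavior is cancelled by the choice $G_p - \log\delta\,\cos(2\dbold_{\Lpar})$, we get $\Lcaltilde_\T(\Ghat + \Phat) = 0$ there. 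The support statement for $E'_\osc$: away from $D^\T_{\Lpar}(3\delta)$, both $\Ghat \equiv 0$ and $\Phat = \Phi_\ave$, which is rotationally invariant, so $\Lcaltilde_\T(\Ghat + \Phat) = \Lcaltilde_\T \Phi_\ave$ is rotationally invariant there, hence its oscillatory part vanishes; this pins $E'_\osc$ inside $D^\T_{\Lpar}(3\delta)$.

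Next I would prove the three norm bounds, working throughout in the scaled metric $\gtilde = m^2 g$ in which the relevant length scales ($\delta = 1/(100m)$, $1/m$, and the curve separations $\sim 1/(k|v|m)$ — here I should keep track that $m$ is large in terms of $k|v|$ by Assumption \ref{Am}) become uniformly controlled. For (i): on $\T\setminus D^\T_L(\delta)$, $\Ghat$ is a fixed cutoff times $G_p - \log\delta\,\cos(2\dbold_{\Lpar})$; the Green's function $G_p$ for $\Lcal_\T$ with unit logarithmic singularity, restricted to the annular region at distance $\geq \delta$ from $p$, has $C^j$-norm in $\gtilde$ bounded by $C(j)$ because in rescaled coordinates $G_p \sim \log(\text{rescaled distance})$ plus smooth, and $\log\delta = -\log(100m)$ is exactly the scale factor that makes this bounded once combined with the rescaling — this is where the precise constants in Definition \ref{Dphat} are chosen to make cancellations work. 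For (ii): $E' = -\Lcaltilde_\T(\Ghat + \Phat) = -\Delta_{\gtilde}(\Ghat+\Phat) - \frac{4}{m^2}(\Ghat + \Phat)$; the first term is supported in the gluing annuli where the cutoffs $\Psibold$ have derivatives of controlled size in $\gtilde$, and by (i) and the explicit form of $\Phi_\ave$ from Lemma \ref{Lphiave} (whose $\gtilde$-norms are $O(1)$ after noting $\Phi_\ave/m$ is a fixed trigonometric function of $2\dbold_{\Lpar}$, wait — $\Phi_\ave$ itself is $\sim m/(k|v|)$, so I need to check the normalization; actually $\Phi = \Phat+\Ghat+\Phip$ and the $\tau_p=1$ normalization together with the $m^2$ rescaling and the factor $\frac{m}{\sqrt 8|v|\sin(\cdots)}$ must conspire to $O(1)$, which is the content to verify carefully). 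For (iii): $\Phip = \Phi - \Ghat - \Phat$ satisfies $\Lcaltilde_\T \Phip = E'$ and is $\group^L_{\sym}$-symmetric with no logarithmic singularities (those are absorbed into $\Ghat$), so by Lemma \ref{Lker}(i) the operator $\Lcaltilde_\T$ restricted to symmetric functions is invertible, and elliptic Schauder estimates with the $C(j)$ bound on $E'$ from (ii) give $\|\Phip : C^j_{\sym}\| \leq C(j)$. The statement about $E'_\ave$ and $E'_\osc$ satisfying the same bound as (ii) follows since averaging over the parallel curves of $\gamma$ (Definition \ref{dave}) is a bounded operation on all the relevant norms.

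The main obstacle I anticipate is bookkeeping the normalizations and scales so that everything genuinely comes out $O(1)$ in $\gtilde$: one must simultaneously track (a) the $\tau_p = 1$ normalization versus the factor $m/(\sqrt 8 |v|\sin(\sqrt 2\pi/k|v|))$ in $\Phi_\ave$, (b) the rescaling $g \rightsquigarrow m^2 g$ which multiplies lengths by $m$ and hence shifts logarithms by $\log m$, (c) the choice $\log\delta = -\log(100m)$ in the definition of $\Ghat$, which is precisely engineered to cancel the $\log m$ from rescaling plus the $\cos(2\dbold_{\Lpar})$ leading behavior of the Green's function, and (d) the smallness of $1/(k|v|m)$ so that the extra parallel curves $L^1,\dots,L^{k-1}$ behave, after rescaling, like a comb of parallel lines at bounded separation and all the cross-interactions stay bounded. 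Once these are aligned — essentially as in the proof of \cite[Lemma 6.8]{LDG} but carrying the parameter $|v|$ and the multiplicity $k$ through — the $C^j$ estimates are routine applications of cutoff-function derivative bounds and interior Schauder estimates.
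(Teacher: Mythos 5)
Your structural claims and your outline for items (i)--(ii) are essentially correct (the paper defers these to the ``almost identical'' proofs of \cite[Lemma 6.12]{LDG}), and your attention to the interplay of scales is the right instinct. But your argument for (iii) has a genuine gap that the paper's proof is specifically designed to avoid.

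You claim that since $\Lcal_\T$ restricted to $\group^L_{\sym}$-symmetric functions is invertible (Lemma~\ref{Lker}(i)), elliptic Schauder estimates together with the bound on $E'$ from (ii) yield $\|\Phip : C^j_{\sym}(\T,\gtilde)\| \leq C(j)$. This does not follow, because the operator norm of $\Lcaltilde_\T^{-1}$ on symmetric functions, measured in the $\gtilde$-scaled norms, is \emph{not} bounded uniformly in $m$. Indeed $\Lcaltilde_\T = \Delta_{\gtilde} + 4/m^2$, the constant function is $\group^L_{\sym}$-symmetric, and $\Lcaltilde_\T 1 = 4/m^2 \to 0$; so the inverse has operator norm at least of order $m^2$. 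Triviality of the symmetric kernel (Lemma~\ref{Lker}(i)) is a statement for each fixed $m$ and is entirely consistent with the inverse blowing up as $m \to \infty$, which it does. Schauder estimates only give interior regularity and require an a priori $C^0$ bound on $\Phip$ as input --- that bound is precisely what is missing.

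The paper sidesteps this by the decomposition $\Phip = \Phip_{\ave} + \Phip_{\osc}$ that you only invoke for $E'$, and the two pieces require genuinely different arguments. For $\Phip_{\osc}$, the average-zero condition along parallel circles removes the near-kernel constant mode; the first eigenvalue of $\Delta_{\gtilde}$ on average-zero $\group^L_{\sym}$-symmetric functions is bounded below by a positive constant, because the quotient torus $\T/\group^L_{\sym}$ has $\gtilde$-sides bounded from below (this is the role of Appendix~\ref{Stori}), so here the ``bounded inverse plus regularity'' mechanism really does work. For $\Phip_{\ave}$, the argument is not a coercivity estimate at all: observe that $\Phip_{\ave}$ \emph{vanishes} outside $D^\T_{\Lpar}(3/m)$ (there $\Ghat = 0$ and $\Phat = \Phi_{\ave}$, so $\Phip = \Phi_{\osc}$), and inside that strip --- which has bounded $\gtilde$-width --- it satisfies the ODE $\partial^2_{\ssstilde}\Phip_{\ave} + (4/m^2)\Phip_{\ave} = E'_{\ave}$. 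The vanishing of $\Ghat$ near $\partial D^\T_{\Lpar}(2/m)$ gives $O(1)$ initial data for $\Phip_{\ave}$ and $\partial_{\ssstilde}\Phip_{\ave}$ there, which propagates over the bounded interval with bounds depending only on $\|E'_{\ave}\|$. You should replace your invertibility-plus-Schauder step with these two separate arguments.
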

\begin{proof}
The proofs of items (i) and (ii) are almost identical to the proofs of the corresponding items in \cite[Lemma 6.12]{LDG}, so we omit the details.  To prove (iii), it suffices to prove that the estimate holds when $\Phip = \Phip_{\ave} + \Phip_{\osc}$ is replaced by either $\Phip_{\ave}$ or $\Phip_{\osc}$.  We first prove the estimate for $\Phip_{\ave}$.  Note by \ref{Dphat} that
\begin{align*}
\Phip_{\ave} = \frac{m}{\sqrt{8}|v|} \sin ( 2 \dbold^{\T}_{\Lpar}) - \Ghat_{\ave}.
\end{align*} 
Note that the left hand side is smooth and the discontinuities on the right hand side cancel.  Using that $\Lcaltilde_{\T} \Phip_{\ave} = E'_{\ave}$, on $D^{\T}_{\Lpar}(3/m)$ we have 
\begin{align}
\label{Ephipode}
\partial^2_{\, \ssstilde} \Phip_{\ave} + \frac{4}{m^2} \Phip_{\ave} = E'_{\ave},
\end{align}
where $\ssstilde : = m \dbold^{\T}_{\Lpar}$.  On a neighborhood of $\partial D^{\T}_{\Lpar}(2/m)$, we have that $\Ghat = 0$ from Definition \ref{Dphat}.  It follows that $| \Phip_{\ave}| < C$ and $| \partial_{\, \sss} \Phip_\ave| < C$ on $\partial D^{\T}_{\Lpar} (2/m)$. Using this as initial data for the ODE and bounds on the inhomogeneous term from (ii) yields the $C^2$ bounds on $\Phip_\ave$ in (iii).  Higher derivative estimates follow inductively from differentiating \eqref{Ephipode} and again using (ii).

We now estimate $\Phip_{\osc}$.  By taking oscillatory parts of the equation \eqref{Ephat} defining $\Phip$, we have
\begin{align*}
\Lcaltilde_{\T} \Phip_{\osc} = E'_{\osc},
\end{align*}
where we have used that $\Lcaltilde_{\T} \Phip_{\osc} = ( \Lcaltilde_{\T} \Phip)_{\osc}$.
Now consider the quotient surface $\T / \group^L_{\sym}$; it is a torus, and in the metric induced from $\gtilde$ has shorter side bounded from below by a positive constant.   From this and Appendix \ref{Stori}, it follows that the first eigenvalue of $\Lcaltilde_{\T}$ with respect to the $\gtilde$ metric, when restricted to functions with average zero, is bounded from below.  By standard theory, it then follows that
\begin{align*}
\| \Phip_{\osc} : C^{2, \beta}_{\sym}(\T, \gtilde)\| 
\leq 
C
\| E' : C^{0, \beta}_{\sym}( \T, \gtilde)\|. 
\end{align*}
Using this, standard regularity theory, and (ii), we conclude that
\begin{align*}
\| \Phip_{\osc} : C^j_{\sym}( \T, \gtilde)\| \leq C(j).
\end{align*}
Item (iii) then follows using that $\Phip = \Phip_{\ave} + \Phip_{\osc}$ and the preceding estimates. 
\end{proof}

\begin{theorem} 
\label{Tcliff}
Let $v = (a,b) \in \N\times \N$ be a relatively prime pair of integers, $\gamma$ be an $(a,b)$-torus knot in the Clifford torus $\T \subset \Sph^3$, $\Lpar \subset \T$ be a maximally separated collection of $k \in \N$ parallel copies of $\gamma$ as in \ref{ELpar}, $L \subset \Lpar$ a collection of $km$ points as in \ref{dL}, and $\group^L_{\sym} \leq O(4)$ be the group of isometries preserving the set $L$.  There is a constant $C> 0$ such that if $m> C k |v|$ and $(a, b, k) \notin \{ (1, 1, 1), (1, 2, 1), (2, 1, 1)\}$,  there exists a smooth embedded $\group^L_{\sym}$-symmetric minimal doubling $\Mbreve$ of $\T$ in $\Sph^3$ with genus $km+1$ and $km$ doubling holes, each containing a point of $L$.

Moreover, there is a number $\cunder> 0$ independent of $v, k, m$ such that the area $|\Mbreve|$ of $\Mbreve$ satisfies
\begin{align*}
|\Mbreve| = 2 |\T| - \frac{1}{m} e^{2\zeta'} \exp\bigg(- \frac{m}{\sqrt{2}|v| \tan \frac{\pi \sqrt{2}}{k|v|}}\bigg),
\end{align*}
for some constant $\zeta' = \zeta'(v, k, m) \in [-\cunder, \cunder]$.
\end{theorem}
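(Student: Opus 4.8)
The plan is to deduce Theorem \ref{Tcliff} from the general existence theorem \ref{Ttheory}, applied with $\Sigma=\T$ and $(N,g)=(\Sph^3,g)$; once Assumption \ref{ALDfam} is verified for an appropriate family of LD solutions, the surface $\Mbreve$, its genus, its hole structure, and its area are all read off from \eqref{EMguO}--\eqref{E:Marea}. The first step is to set up the family. Since $L$ and $\group^L_\sym$ are rigidly determined by $(a,b,k,m)$, the singular set cannot be moved, so I would take every diffeomorphism $\Fcal^\Sigma_\zetabold$ to be the identity and $L\llbracket\zetabold\rrbracket\equiv L$; by Lemma \ref{Lgroup}(i) the element $\Rcapunder_{p,\nu(p)}\in\group^L_\sym$ acts on $T_p\T$ as $-\id$, so the $\group^L_\sym$-invariant subspace of $\val[p]$ (hence of $\val[L]$, by transitivity) is one-dimensional, spanned by the constant; thus $\Pcal=\R$, $\domzb=[-\cunder,\cunder]$, and there is a single real parameter governing a single vertical matching equation. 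By transitivity, $\taubold\llbracket\zeta\rrbracket\equiv\tau(\zeta)>0$ is one number.

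For each $\zeta$ I would put $\varphi\llbracket\zeta\rrbracket:=\tau(\zeta)\,\Phi+w_\zeta$, where $\Phi=\Phi[k,m,v]$ is the unit-strength symmetric LD solution of Lemma \ref{Lker}(ii), $w_\zeta$ lies in an obstruction space $\skernelv[L]$ built in the standard way from a symmetric radial cutoff supported in $D^\T_L(4\delta_p)$, and $Z_\zeta$ is the isomorphism of the invariant part of $\val\llbracket\zeta\rrbracket$ onto $\Pcal=\R$; the properties in \ref{aK} are then checked exactly as in \cite{LDG}. The value $\tau(\zeta)$ is determined by the vertical matching equation. Using the decomposition $\Phi=\Ghat+\Phat+\Phip$ of \ref{Dphat} together with Lemmas \ref{Lphiave} and \ref{LPhipest}, near each $p\in L$ one has $\Phi=\log\dbold^\T_p+\mathcal A+O(\dbold^\T_p)$ with $\mathcal A=\tfrac{m}{2F}+(\text{lower-order terms})$, where $\tfrac{m}{2F}=\Phi_\ave|_{\Lpar}$ comes from \ref{Lphiave}(ii) and $F:=\sqrt2\,|v|\tan\tfrac{\sqrt2\pi}{k|v|}$. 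Equating the finite part of $\tau\Phi+w_\zeta$ at $p$ with that of a catenoidal bridge of neck radius $\tau$ (which is $\tau\log(2/\tau)$ up to a bounded term) and dividing by $\tau$ gives, schematically,
\[
\log\tau \;=\; -\frac{m}{2F}\;+\;(\text{explicit lower-order terms})\;+\;\zeta\;+\;o(1),
\]
so that $\tau(\zeta)$ is exponentially small in $m$ when $F>0$ and depends monotonically on $\zeta$.

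Next I would verify the remaining hypotheses of \ref{ALDfam}. Conditions \ref{Aa}, \ref{Ab}, \ref{Atau} are immediate from the trivial diffeomorphisms and the common value of the $\tau_p$; condition \ref{Ac}, i.e.\ \ref{con:one}(ii)--(vi), follows from the explicit formula $\varphi=\tau\Phi+w_\zeta$ and Lemmas \ref{Lphiave}, \ref{LPhipest}: (ii)--(iii) because $\tau\ll1$ --- here one uses that, since $F>0$, even $\tau^{\alpha/100}$ is exponentially small and hence smaller than $\delta_p$ --- the $C^{2,\beta}$ and $C^{3,\beta}$ bounds because $\Ghat$ and $\Phip$ are controlled in the scaled metric $\gtilde$ and $\Phat=\Phi_\ave$ is explicit, and the positivity $\varphi\ge\tau_{\max}^{1+\alpha/5}$ because $\Phi_\ave$ is bounded below by a positive multiple of $m$ exactly when $F>0$. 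Finally \ref{AZ} reduces to a one-dimensional fixed-point statement: the map $\zeta\mapsto\zeta-Z_\zeta(\Mcal_L\varphi\llbracket\zeta\rrbracket)$ sends $\domzb$ into $\tfrac12\domzb$ for a suitable $\cunder$, since $Z_\zeta(\Mcal_L\varphi\llbracket\zeta\rrbracket)$ tracks $\zeta$ up to a contraction through the $\zeta$-dependence in the matching equation; this is where the hypothesis $m>Ck|v|$ is used, to dominate the errors. With \ref{ALDfam} in hand, \ref{Ttheory} (whose framework carries the symmetries of the data, all of which are $\group^L_\sym$-symmetric here) produces a smooth, closed, embedded, $\group^L_\sym$-symmetric minimal doubling $\Mbreve$ of $\T$ with genus $2g_\T-1+|L|=km+1$ and $\Omegahat=\T\setminus\bigsqcup_{p\in L}\Dbreve_p$, i.e.\ $km$ doubling holes each containing a point of $L$; for the area, \eqref{E:Marea} with $\tau_p\equiv\tau$ gives $|\Mbreve|=2|\T|-\pi km\,\tau^2(1+O(\tau^{1/2}|\log\tau|))$, and substituting the matching equation --- the lower-order terms in $\log\tau$ accounting for the factor $km$, and the bounded remainders (including $1+O(\tau^{1/2}|\log\tau|)$ and the fixed-point value $\zetaboldhat$) being absorbed into a single $\zeta'=\zeta'(v,k,m)\in[-\cunder,\cunder]$ --- yields the stated formula. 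The excluded triples are exactly those with $\tan\tfrac{\sqrt2\pi}{k|v|}\le0$: when $(a,b,k)=(1,1,1)$ one has $F=0$, so $\Phi_\ave$ and the matching equation blow up, and when $(a,b,k)\in\{(1,2,1),(2,1,1)\}$ one has $F<0$, so $\Phi_\ave<0$ near $\gamma$, violating the positivity in \ref{con:one}(vi).

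I expect the main obstacle to be the verification of \ref{AZ} within the prescribed-unbalancing framework, together with the upstream task of extracting the finite part $\mathcal A$ of $\Phi$ at the singular points precisely enough --- not merely bounding $\Ghat$ and $\Phip$ as in Lemmas \ref{Lphiave}--\ref{LPhipest}, but tracking the exact lower-order terms --- so that the remainder in $\log\tau$, and hence the constant $\zeta'$, is bounded uniformly in $v$, $k$, and $m$. Securing that uniformity, especially in $k$, is the delicate point.
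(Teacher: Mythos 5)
Your overall strategy matches the paper's exactly: apply Theorem~\ref{Ttheory} with $\Fcal^\Sigma_\zetabold=\id_\T$, $L\llbracket\zetabold\rrbracket\equiv L$, $\Pcal\cong\R$ (via transitivity of $\group^L_{\sym}$ and the point-reflections of Lemma~\ref{Lgroup}), and verify Assumption~\ref{ALDfam} using Lemmas~\ref{Lker}, \ref{Lphiave}, and \ref{LPhipest}; the identification of the excluded triples with $F=\sqrt2\,|v|\tan\tfrac{\sqrt2\pi}{k|v|}\le0$ and the derivation of the area formula from \eqref{E:Marea} are also exactly as in the paper. However, there is a genuine error in your setup: you define $\varphi\llbracket\zeta\rrbracket:=\tau(\zeta)\Phi+w_\zeta$ with $w_\zeta\in\skernelv[L]$, but by Assumption~\ref{aK}(ii) the functions in $\skernel[L]=\Lcal_\Sigma\skernelv[L]$ are generically nonzero (supported on the annuli $D^\Sigma_p(4\delta_p)\setminus D^\Sigma_p(\delta_p/4)$), so $\tau\Phi+w_\zeta$ fails Definition~\ref{dLD}(i) and is not an LD solution, violating Assumption~\ref{ALDfam}(iv). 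In the LD framework the obstruction-space term $\vunder\in\skernelv[L]$ is added at the initial-surface stage, not as part of $\varphi\llbracket\zeta\rrbracket$; the paper sets $\varphi\llbracket\zeta\rrbracket=\tau\Phi$ with $\tau=\tau(\zeta):=\tfrac1m e^\zeta e^{-m/(2F)}$ prescribed a priori, and the parameter $\zeta$ closes the loop through condition (e) of Assumption~\ref{ALDfam} via the computation $\tfrac1\tau\Mcal_p\varphi=\zeta+\Phip(p)+\log 50$ combined with the bound on $\Phip$ from Lemma~\ref{LPhipest}(iii). Your ``matching equation'' heuristic is morally the same computation, but by folding $w_\zeta$ into $\varphi\llbracket\zeta\rrbracket$ you break the LD-solution structure and blur what $\zeta$ actually parametrizes. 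Once $\varphi\llbracket\zeta\rrbracket=\tau(\zeta)\Phi$ and the explicit choice of $\tau(\zeta)$ are in place, the rest of your outline---the verification of \ref{con:one}(ii)--(vi) and the absorption of the bounded remainders into $\zeta'$---goes through as in the paper.
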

\begin{proof}
We will apply the general existence result \cite[Theorem A]{LDG} for minimal doublings, with the obvious simplifications and adaptations necessary in order to accommodate the symmetries.  In order to apply this result, we must check that Assumption 5.2 of \cite{LDG} holds.

To this end, let $v = (a, b), \gamma, m$, and $L$ be as in the statement of the theorem.  Recall by \ref{Lker} that the kernel $(\mathrm{ker} \Lcal_{\T})_{\sym}$ of the linearized operator when restricted to $\group^L_{\sym}$-symmetric functions is trivial, so Assumption 4.1 of \cite{LDG} holds.  

We now define continuous families of objects as in \cite[Assumption 5.2]{LDG} parametrized by $\zeta \in [-\cunder, \cunder] $, where $\cunder> 0$ is a constant independent of $m$ which will be chosen later, and check that that items (i)-(vii) of Assumption 5.2 hold.  Specifically, we define diffeomorphisms $\Fcal^\T_{\zeta} : \T \rightarrow \T$, finite sets $L[ \zeta] \subset \T$, configurations $\taubold[\zeta] : L[\zeta]\rightarrow \R_+$, LD solutions $\varphi[ \zeta ]$ with singular sets $L[\zeta]$ and configurations $\taubold [\zeta] : L[\zeta] \rightarrow \R_+$, and for each $L[\zeta]$, constants $\delta_p[\zeta]$ by
\begin{equation}
\begin{gathered}
\label{Edefs}
\Fcal^\T_\zeta = \id_\T, 
\quad
L[\zeta] = L \quad \text{(recall \ref{dL})},
\quad
\tau_p[\zeta] = \tau: = \frac{1}{m}e^{\zeta} e^{-\frac{m}{2F}} \quad \text{for } p \in L, 
\\
\varphi [\zeta ] = \tau \Phi,
\quad 
\text{and}
\quad
\delta_p[\zeta] : = \delta = 1/(100m),
\end{gathered}
\end{equation}
where $F=\sqrt{2}|v| \tan \frac{\sqrt{2} \pi}{k |v|}$ was defined in \ref{Lphiave}.
It follows from these definitions that items (i)-(v) of Assumption 5.2 hold. 

 Next, define obstruction spaces $\skernelv \llbracket \zeta \rrbracket$ as in \cite[Remark 3.12]{LDG}; from the definition and the preceding, \cite[Assumption 3.11]{LDG} holds, hence item (vi) holds.  Now define symmetric versions of the obstruction spaces by letting $\skernelv_{\sym}[L], \val_{\sym}[L]$ be the subspaces of $\skernelv[L], \val[L]$ consisting of the $\group^L_{\sym}$-invariant elements, where $\skernelv[L] = \bigoplus_{p \in L} \skernelv[p]$.  Since $\group^L_{\sym}$ acts transitively on $L$, contains $\Rcap_{p, \nu(p)}$ for each $p \in L$ (recall \ref{Lgroup}), and the differential of any $\Rcapunder_{p, \nu(p)}$-invariant function vanishes at $p$, it is easy to see that $\val_{\sym}[L]$ is $1$-dimensional and may be identified with $\R$.

For (vii), define a family of linear isomorphisms $Z_\zeta  : \val_{\sym} \llbracket \zeta \rrbracket \rightarrow \Pcal$ by $Z_\zeta(\mu ) = \frac{1}{\tau} \mu$. 

Next we check that items (a)-(e) of Assumption 5.2 hold: (a), (b), and (d) are trivial from the prior definitions and the symmetry.  For (c) we must check the uniformity conditions \cite[Convention 3.15]{LDG} on the LD solutions $\varphi[\zeta]$.  The assumption on $(a, b, k)$ implies that $F = \sqrt{2}|v| \tan \frac{\sqrt{2}\pi}{k |v|}$ defined in \ref{Lphiave} is positive, hence
 $\tau$ in \eqref{Edefs} can be made as small as needed by taking $m$ large.  Then 3.15(i)-(iii) follow by taking $m$ large enough.  Again using that $F>0$, it follows from \ref{Lphiave} that $\Phi_{\ave} > ckm$ for some $c>0$.  The estimates in 3.15(iv)-(vi) now follow easily using that $\varphi = \tau \Phi$, the decomposition of $\Phi$ in \ref{Dphat}, and the estimates on $\Ghat$ and $\Phi'$ in \ref{LPhipest}.  This completes the verification of Assumption 5.2(c).

Finally, we check that item (e) of Assumption 5.2 holds.  By the symmetry, it suffices to show that
\begin{align}
\label{EZbd}
\zeta - Z_\zeta (\Mcal_p  \varphi) \in \frac{1}{2}[-\cunder, \cunder]
\end{align}
for any point $p \in L$, where we recall that $Z_\zeta \mu = \mu / \tau$. 

To prove \eqref{EZbd} holds, recalling the definition of the mismatch $\Mcal_p \varphi \in \val[p]$ from \cite[Definition 3.10]{LDG}, in light of the $\group^L_{\sym}$-symmetry, we see that $\Mcal_p \varphi$ can be identified with the real number such that
\begin{align*}
\varphi \circ \exp_p^{\T}(w) = \tau \log (2 |w|/\tau) + (\Mcal_p\varphi)(w) + O(|w|^2\log |w|)
\end{align*}
for small $w \in T_p \T$.  In particular using this and \eqref{Ephat} to expand $\Mcal_p \varphi$, we find 
\begin{align*}
\frac{1}{\tau} \Mcal_p \varphi 
= \frac{m}{2F} + \log  \frac{\tau}{2\delta} + \Phip(p)
= \zeta + \Phip(p) + \log (50), 
\end{align*}
where the second equality uses \eqref{Edefs}.  Equation \eqref{EZbd} now follows using \ref{LPhipest}(iii) to estimate $\Phip(p)$ and taking $\cunder$ large enough in terms of $\log 50$ and the bound on $\Phip$ from \ref{LPhipest}.  This completes the verification of \cite[Assumption 5.2]{LDG}.

We may therefore apply \cite[Theorem A]{LDG}, which implies the conclusions of the Theorem. 
\end{proof}

\begin{prop}
\label{Pnum}
There is a constant $c>0$ such that for all $m \in \N$ sufficiently large, the number of pairwise nonisometric minimal $\T$-doublings from Theorem \ref{Tcliff} with genus $m+1$ is at least $cm^2$. 
\end{prop}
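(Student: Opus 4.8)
The plan is to count the triples $(a,b,k)$ with $k=1$ that produce a doubling of genus $m+1$ via Theorem \ref{Tcliff}, and then to argue that (apart from a controlled number of coincidences) distinct such triples give rise to pairwise non-isometric surfaces. When $k=1$ the genus of the doubling produced by Theorem \ref{Tcliff} is $km+1 = m+1$, so for a \emph{fixed} large $m$ every admissible pair $v=(a,b)$ with $\gcd(a,b)=1$, $(a,b,1)\notin\{(1,1,1),(1,2,1),(2,1,1)\}$, and $m > C|v|$ yields such a doubling. The number of such pairs is comparable to the number of coprime pairs $(a,b)\in\N\times\N$ with $a^2+b^2 \le (m/C)^2$, which by a standard lattice-point count is $\Sim_{c'} m^2$ for some absolute $c'$; in particular it is at least $c'' m^2$ for large $m$.

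Next I would show these surfaces are pairwise non-isometric for distinct $v$, up to a count that does not affect the quadratic order. The key geometric invariant available is the symmetry group $\group^L_{\sym}$ together with the genus and the area. By Theorem \ref{Tcliff} the area is $2|\T| - \frac{1}{m}e^{2\zeta'}\exp\!\big(-m/(\sqrt{2}|v|\tan\tfrac{\pi\sqrt 2}{|v|})\big)$ with $\zeta'$ bounded independently of $v,m$; since the dominant factor $\exp(-m/(\sqrt 2 |v|\tan\tfrac{\pi\sqrt 2}{|v|}))$ is strictly monotincreasing in $|v|$ for $|v|$ in the relevant range and the prefactor $e^{2\zeta'}$ varies only within a bounded multiplicative window, two doublings with $|v|\neq |v'|$ sufficiently separated have different areas, hence are non-isometric. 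Pairs $v,v'$ with $|v|=|v'|$ (or with $|v|$ extremely close to $|v'|$) must be handled separately: here one uses the rotational structure of $L$ (the spacing of the $m$ bridges along the torus knot) and the fact that $\group^L_{\sym}$ determines $L$ up to ambient isometry when $|v|>\sqrt 2$ (Lemma \ref{Lgroup}(iii)), so that an ambient isometry carrying one doubling to another would have to carry $L[1,m,v]$ to $L[1,m,v']$; since an isometry of $\Sph^3$ preserving $\T$ acts on $\T$ by an isometry of the flat square torus, and such an isometry carries the $(a,b)$-torus knot to an $(a',b')$-torus knot only if $\{a,b\}=\{a',b'\}$, one concludes $v'$ is $v$ up to the symmetries $(a,b)\mapsto(b,a)$. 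Thus each isometry class is hit at most twice among the admissible $v$.

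Finally, assembling the pieces: the number of admissible $v$ is at least $c'' m^2$, each isometry class accounts for at most a bounded number (at most $2$, and at most $O(1)$ more if one is conservative about the near-coincidences in area) of admissible $v$, so the number of pairwise non-isometric doublings of genus $m+1$ is at least $c m^2$ for a suitable $c>0$ and all large $m$, which is the assertion.

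I expect the main obstacle to be the non-isometry argument for pairs $v\neq v'$ with $|v|=|v'|$ — i.e.\ ruling out an exotic ambient isometry matching two of these surfaces when the coarse invariants (genus, area) fail to separate them. The cleanest route is the symmetry-group argument sketched above: an ambient isometry between the two minimal surfaces must (by uniqueness of the doubling and the precise control of its geometry, the bridges sitting in the $\Dbreve_p$ near the points of $L$) restrict to an isometry of $\T$ matching the configurations $L$, and on the flat square torus the orbit of lattice points along a $(a,b)$-knot determines the unordered pair $\{a,b\}$. Care is needed only when $|v|\le\sqrt 2$, but those finitely many exceptional $v$ (namely $(1,1)$ and the excluded cases) do not affect the count.
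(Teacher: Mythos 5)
Your proposal is correct and rests on the same two ingredients as the paper's proof: a quadratic lower bound on the number of admissible coprime pairs $v=(a,b)$ with $k=1$ (classical lattice-point count, cf.\ Hardy--Wright Thms.\ 331--332), plus the observation that an ambient isometry between two such doublings would have to carry the singular set $L[1,m,v]$ to $L[1,m,v']$, hence match the underlying torus knots, and on the flat square torus distinct slopes $b/a$ give non-isometric knots.

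Two remarks on where your route diverges. First, the paper sidesteps the $(a,b)\leftrightarrow(b,a)$ double-counting by restricting to $a<b$ from the outset, so that distinct admissible $v,v'$ automatically have distinct slopes $>1$ and there is nothing to divide by; this is marginally cleaner than allowing both orderings and quotienting by at most $2$. Second, your supplementary area argument does not actually carry much weight: with $F(|v|)=\sqrt{2}|v|\tan(\pi\sqrt{2}/|v|)=2\pi+\tfrac{4\pi^3}{3|v|^2}+O(|v|^{-4})$, the $v$-dependent part of the exponent $-m/F(|v|)$ is of size $\sim m/|v|^2$, which for $|v|$ comparable to $m/C$ (the bulk of the count) is $O(1/m)$ and therefore drowned out by the $O(1)$ window $\zeta'\in[-\underline{c},\underline{c}]$. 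So the area invariant separates at most a bounded number of coarse bins of $|v|$ and cannot by itself produce $\gtrsim m^2$ classes; you correctly fall back to the knot/symmetry argument, which is what carries the proof in the paper and in your proposal alike. The area detour is harmless but inessential.
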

\begin{proof}
Let $C> 0, v = (a,b), k$, and $m$ be as in the statement of Theorem \ref{Tcliff}, so that as long as $m\geq Ck|v|$, the theorem ensures the existence of an embedded $\T$-doubling $\Mbreve = \Mbreve[v, k, m]$ with $m$ bridges on each of $k$ parallel $(a, b)$ torus knots.  Fixing $m$ large and denoting by $\Mbreve_{v, m} = \Mbreve[v, 1, m]$, we are interested in the number of pairwise nonisometric doublings $\Mbreve_{v,m}$.  

Given $R>0$, let $S_R = \{ v = (a, b) \in \N\times \N : a< b, \mathrm{gcd}(a,b) = 1,  |v|\leq R\}$.  By classical number-theoretic arguments (see Theorems 331 and 332 in \cite{HardyWright}), there is a constant $C_1> 0$ such that for all large enough $R$, the cardinality of $S_R$ is at least $C_1 R^2$.  

By combining the preceding, in order to prove the theorem, it suffices to prove that if $v, v' \in S_{m/C}$, then $\Mbreve_{v, m}$ and $\Mbreve_{v', m}$ are not isometric.  By the definition of $S_R$, each $v \in S_R$ can be identified with a torus knot $\gamma[v]$ on $\T$ with slope $b/a> 1$.  Since $v$ and $v'$ correspond to different slopes, the corresponding knots $\gamma[v]$ and $\gamma[v']$ in $\T$ are not related by an isometry of $\T$.  Since the catenoidal bridges of $\Mbreve_{v, m}$ and $\Mbreve_{v', m}$ are positioned with centers equally spaced on $\gamma[v]$ and $\gamma[v']$, it follows from the largeness of $m$ that $\Mbreve_{v,m}$ and $\Mbreve_{v', m}$ are not related by an isometry of $\Mbreve$. 
\end{proof}

\subsection*{Configurations with three singularities modulo symmetries}
\phantom{ab}

In this subsection we construct and estimate $\group[k, m, v]$-symmetric LD solutions on $\T$ which have three singularities on each fundamental domain, and apply \cite[Theorem 5.7]{LDG} to construct corresponding minimal surfaces.  This generalizes the construction from \cite[Theorem 6.25]{LDG}, where we considered the case where $|v|=1$.

To simplify the estimates, we assume in this subsection that $m/k < C_1$ for a fixed constant $C_1>0$.  To begin, for $p_0, p_1, p_2, \group = \group[k,m,v]$ as defined in \eqref{dL}, define
\begin{align}
\label{ELup}
L = L[k, m , v] := \bigcup_{i=0}^2 L_i := \bigcup_{i=0}^2 \group p_i
\end{align}
and define for $i=0, 1, 2$ the $\group$-invariant LD solution $\Phi_i = \Phi_i[k,m, v]$ satisfying $\tau_p = 1$  $\forall p \in L_i$. 

\begin{figure}[h]
	\centering
	\begin{tikzpicture}[scale=.8]
	 \draw[thin,dotted] (0,0) grid (6,6);
	\draw[thin] (0, 0) rectangle (6,6); 
	\draw[thick] (0, 0) to (6, 4); 
	\draw[thick] (3, 0) to (6, 2); 
	\draw[thick] (0, 2) to (6,6);
	\draw[thick] (0, 4) to (3, 6);
	\draw (0, 0) node[circle,  left]{$p_0$}; 
	\draw (1.5, 1) node[circle, below ]{$p_1$}; 
	\draw (0, 1) node[circle,  left]{$p_2$};
	\filldraw[color=black](0, 0) circle (3pt);
	\draw[](1.5, 1) circle (3pt);
	\draw[](0, 1) circle (3pt);
	\filldraw[](0, 0) circle (3pt);
	\filldraw[](0, 1) circle (3pt);
	\filldraw[](0, 2) circle (3pt);
	\filldraw[](0, 3) circle (3pt);
	\filldraw[](0, 4) circle (3pt);
	\filldraw[](0, 5) circle (3pt);
	\filldraw[](0, 6) circle (3pt);
	\filldraw[](3, 0) circle (3pt);
	\filldraw[](3, 1) circle (3pt);
	\filldraw[](3, 2) circle (3pt);
	\filldraw[](3, 3) circle (3pt);
	\filldraw[](3, 4) circle (3pt);
	\filldraw[](3, 5) circle (3pt);
	\filldraw[](3, 6) circle (3pt);
	\filldraw[](6, 0) circle (3pt);
	\filldraw[](6, 1) circle (3pt);
	\filldraw[](6, 2) circle (3pt);
	\filldraw[](6, 3) circle (3pt);
	\filldraw[](6, 4) circle (3pt);
	\filldraw[](6, 5) circle (3pt);
	\filldraw[](6, 6) circle (3pt);
	\filldraw[](1.5, 1) circle (3pt);
	\filldraw[](1.5, 3) circle (3pt);
	\filldraw[](1.5, 5) circle (3pt);
	\filldraw[](4.5, 1) circle (3pt);
	\filldraw[](4.5, 3) circle (3pt);
	\filldraw[](4.5, 5) circle (3pt);
	\draw[](3, 1) circle (3pt);
	\draw[](4.5, 1) circle (3pt);
	\draw[](6, 1) circle (3pt);
	\draw[](3, 1) circle (3pt);
	\draw[](4.5, 1) circle (3pt);
	\draw[](6, 1) circle (3pt);
	\draw[](0, 3) circle (3pt);
	\draw[](1.5, 3) circle (3pt);
	\draw[](3, 3) circle (3pt);
	\draw[](4.5, 3) circle (3pt);
	\draw[](6, 3) circle (3pt);
	\draw[](0, 5) circle (3pt);
	\draw[](1.5,5) circle (3pt);
	\draw[](3, 5) circle (3pt);
	\draw[](4.5, 5) circle (3pt);
	\draw[](6,5) circle (3pt);
	\draw[](1.5, 0) circle (3pt);
	\draw[](4.5,0) circle (3pt);
	\draw[](1.5, 2) circle (3pt);
	\draw[](4.5,2) circle (3pt);
	\draw[](1.5, 4) circle (3pt);
	\draw[](4.5,4) circle (3pt);
	\draw[](1.5, 6) circle (3pt);
	\draw[](4.5,6) circle (3pt);
	\end{tikzpicture}
	\caption{A depiction of a torus knot $\gamma$ and singular set $L$ as in \ref{ELup} with $v = (2, 3)$, $k=1$ and $m = 6$.  The points of $L$ are denoted by black circles, and points $p$ of $\T \setminus L$ for which the reflection $\Rcapunder_{p, \nu(p)}$ about $p$ preserves $L$ are denoted by white circles. }
	\label{Fim}
\end{figure}
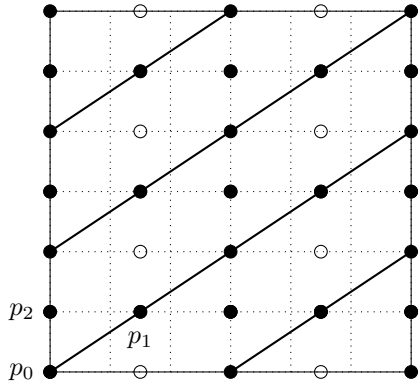

Define a space of parameters $\Pcal : = \R^2$, and for a constant $\cunder>0$ to be determined later, define $B_{\Pcal}: = [-\cunder, \cunder]\times [ - \frac{\cunder}{km}, \frac{\cunder}{km}]$, and for each $\zetabold = (\zeta, \sigma_1, \sigma_2) \in B_\Pcal$ and LD solution
\begin{align}
\label{Ephithree}
\varphi = \varphi \llbracket \zetabold \rrbracket : = \sum_{i=0}^2 e^{\sigma_i}  \tauunder \Phi_i,
\quad 
\text{where}
\quad
\tauunder = \tauunder \llbracket \zetabold \rrbracket : = \frac{1}{m} e^{\zeta} e^{-3 \frac{km}{4\pi}},
\end{align}
and by convention we define $\sigma_0: = - \sigma_1 - \sigma_2$. 

Since $\group^L_{\sym}$ acts transitively on each $L_i$, $i=1,2,3$ and $\forall p \in L$, the differential of any $\Rcapunder_{p, \nu(p)}$-invariant function vanishes at $p$, it follows that $\val_{\sym}\llbracket \zetabold \rrbracket := \val_{\sym} [L]$ is $3$-dimensional and may be identified with $\R^3$.

\begin{prop}
\label{Pzb}
There is an absolute constant $C$ (independent of $\cunder$) such that for $k, m$ as in \ref{Am}, $mk$ large enough (depending on $\cunder$), and $m/k< C_1$ the map $Z_\zetabold : \val_{\sym}\llbracket \zetabold \rrbracket \rightarrow \Pcal$ defined by
\begin{align}
\label{Zzeta}
Z_{\zetabold}(\muboldtilde) = \frac{1}{3}\left( \sum_{i=0}^2 \mutilde_i, \frac{4\pi}{3km} (\mutilde_0 + \mutilde_2 - 2 \mutilde_1), 
\frac{4\pi}{3km} (\mutilde_0 + \mutilde_1 - 2 \mutilde_2)
\right),
\end{align}
where here $\muboldtilde = \tauunder ( \mutilde_0, \mutilde_1, \mutilde_2)$ satisfies $\zetabold - Z_\zetabold( \Mcal_L \varphi) \in [- C, C] \times [ \frac{-C}{km} , \frac{C}{km} ]^2$.
\end{prop}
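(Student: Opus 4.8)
The plan is to follow the template of the proof of Theorem \ref{Tcliff} (and of \cite[Theorem 6.25]{LDG}, which handled the case $|v|=1$): compute the mismatch $\Mcal_L\varphi$ essentially explicitly, and observe that \eqref{Zzeta} has been chosen precisely so as to invert its leading part.

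First I would reduce to three scalars. Since $\varphi=\varphi\llbracket\zetabold\rrbracket$ is $\group^L_\sym$-symmetric and $\group^L_\sym$ acts transitively on each $L_i$ with point stabilizers containing the reflections $\Rcapunder_{p,\nu(p)}$ (so the differential of any symmetric function vanishes on $L$), the mismatch lies in the $3$-dimensional space $\val_\sym\llbracket\zetabold\rrbracket\cong\R^3$ and is determined by its values $\Mcal_{p_0}\varphi,\Mcal_{p_1}\varphi,\Mcal_{p_2}\varphi$, which we write as $\Mcal_L\varphi=\tauunder(\mutilde_0,\mutilde_1,\mutilde_2)$. Near $p_i$ only the summand $e^{\sigma_i}\tauunder\Phi_i$ of $\varphi$ is singular, the others being smooth there; so, transcribing the computation of $\frac1\tau\Mcal_p\varphi$ in the proof of Theorem \ref{Tcliff}, using a decomposition $\Phi_i=\Ghat_i+\Phat_i+\Phip_i$ as in \ref{Dphat}, and recalling $\tau_{p_i}=e^{\sigma_i}\tauunder$ and $\delta=1/(100m)$, one obtains
\begin{align*}
\mutilde_i=e^{\sigma_i}\Big(\Phat_i(p_i)+\log\frac{\tau_{p_i}}{2\delta}+\Phip_i(p_i)\Big)+\sum_{j\ne i}e^{\sigma_j}\Phi_j(p_i),
\qquad
\log\frac{\tau_{p_i}}{2\delta}=\zeta+\sigma_i-\frac{3km}{4\pi}+\log50,
\end{align*}
the powers of $m$ cancelling exactly as in Theorem \ref{Tcliff}.

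Next I would estimate the self- and cross-interactions. For $\Phi_0$ these are Lemmas \ref{Lphiave} and \ref{LPhipest}; for $\Phi_1,\Phi_2$ the singular sets $L_1,L_2$ are, like $L_0$, unions of $k$ parallel copies of $\gamma$ each carrying $m$ equally spaced points, so the proofs of \ref{Lphiave} and \ref{LPhipest} go through with only cosmetic changes, yielding a geometric constant $F_i$ with $\Phat_i(p_i)=\frac m{2F_i}$. Since $m/k<C_1$ forces $|v|$ bounded and $k,m$ large, $F_i$ is comparable to $2\pi/k$, so $\Phat_i(p_i)=\frac{km}{4\pi}+O(1)$; and because $p_i$ lies outside the support of $\Ghat_j$ for $j\ne i$ while $\Phat_j$ is bounded between $\frac m{2F_j}$ and a comparable quantity, also $\Phi_j(p_i)=\frac{km}{4\pi}+O(1)$ for $j\ne i$, and $\Phip_i(p_i)=O(1)$, with all $O(1)$'s absolute. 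Writing $S:=\sum_je^{\sigma_j}$ and substituting,
\begin{align*}
\mutilde_i=e^{\sigma_i}\big(\zeta+\sigma_i+O(1)\big)+\frac{km}{4\pi}\big(S-3e^{\sigma_i}\big)+O(1).
\end{align*}
Plugging into \eqref{Zzeta}: summing over $i$ annihilates the $\frac{km}{4\pi}(S-3e^{\sigma_i})$ terms and gives $\frac13\sum_i\mutilde_i=\zeta+O(1)$ once $km$ is large enough (in terms of $\cunder$) to absorb the $O(\cunder^2/(km))$ corrections; for the remaining coordinates one uses $\sigma_0=-\sigma_1-\sigma_2$ and $|\sigma_j|\le\cunder/(km)$ to get $e^{\sigma_0}+e^{\sigma_2}-2e^{\sigma_1}=-3\sigma_1+O(\cunder^2/(km)^2)$, hence $\mutilde_0+\mutilde_2-2\mutilde_1=\frac{9km}{4\pi}\sigma_1+O(1)$ and the second coordinate of $Z_\zetabold(\Mcal_L\varphi)$ equals $\sigma_1+O(1/(km))$, and likewise the third equals $\sigma_2+O(1/(km))$. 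This is exactly the asserted $\zetabold-Z_\zetabold(\Mcal_L\varphi)\in[-C,C]\times[\frac{-C}{km},\frac{C}{km}]^2$ with $C$ absolute.

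The step I expect to be the main obstacle is the estimation of the three LD solutions $\Phi_0,\Phi_1,\Phi_2$: identifying the spacings of the $\gamma$-parallels carrying $L_1$ and $L_2$, pinning down their common leading constant $\frac{km}{4\pi}$ (equivalently $\frac m{2F_i}$ with the correct geometric $F_i$), and controlling all the remaining parts by constants that are genuinely \emph{absolute}, uniformly in $k,m$ subject to $m/k<C_1$. Granting those estimates, the mismatch expansion is a transcription of the Theorem \ref{Tcliff} computation, and the final step is the routine observation that \eqref{Zzeta} inverts the $km$-leading part of the map $\zetabold\mapsto\Mcal_L\varphi$.
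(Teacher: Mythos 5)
Your argument is correct and follows essentially the same route as the paper's: the paper's proof also expands $\mu_i=\frac{1}{\tauunder}\Mcal_{p_i}\varphi$ via the decomposition of each $\Phi_j$ into average, oscillatory, and singular pieces (quoting \ref{Lphiave}, \ref{Dphat}, \ref{LPhipest}), obtains precisely your leading form $\mu_i=\frac{km}{4\pi}\sum_j e^{\sigma_j}+e^{\sigma_i}\log\frac{e^{\sigma_i}\tauunder}{2\delta}+O(C+\cunder^2/(km))$, and then recovers $\zeta$ from $\frac13\sum_i\mu_i$ and $\sigma_1,\sigma_2$ from the differences $2\mu_i-\mu_j-\mu_k$ using $\sum_i\sigma_i=0$, exactly as you do. The one step you rightly flag—transferring the estimates of \ref{Lphiave} and \ref{LPhipest} from $\Phi_0$ to $\Phi_1,\Phi_2$—is indeed taken for granted in the paper, and your observation that it holds because $L_1,L_2$ are configurations of the same type as $L_0$ (so the proofs carry over verbatim) is the correct justification.
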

\begin{proof}
Using \ref{Lphiave}, \ref{Dphat},  \eqref{Ephithree}, and the largeness of $k$ in the assumption,  for each $i \in \{0,1,2\}$ it follows that $\mu_i : = \frac{1}{\tauunder} \Mcal_{p_i} \varphi $ satisfies
\begin{equation}
\label{Emu}
\begin{aligned}
\mu_i &= \frac{km}{4\pi} \sum_{j=0}^2 e^{\sigma_j} + \bigg( \sum_{j=0}^2 e^{\sigma_j} \Phi'_j\bigg) \left. \right|_{ p_i } + e^{ \sigma_i} \log \frac{e^{\sigma_i} \tauunder}{2\delta} + O(1/k)\\
&= 3 \frac{km}{4\pi} + O\left( \frac{\cunder^2}{km}\right) + O(C) + (1+ \sigma_i) \log \frac{\tauunder}{2\delta} +O(1/k),
\end{aligned}
\end{equation}
where we have expanded the exponentials and used that $\sum_{i=0}^2 \sigma_i = 0$.  Therefore,
\begin{align}
\label{Emusum}
\frac{1}{3}\sum_{i=0}^2 \mu_i = 3 \frac{km}{4\pi} + \log \frac{\tauunder}{2\delta} + O( C + \frac{\cunder^2}{km} ) = \zeta + O \left( C + \frac{\cunder^2}{km}\right).
\end{align}
Using \eqref{Emu}, that $\sum_{i=0}^2 \sigma_i = 0$, and \eqref{Ephithree}, we calculate
\begin{equation}
\label{Emufinal}
\begin{aligned}
\frac{1}{3}( 2 \mu_2 - \mu_1 - \mu_0) &= - 3 \frac{km}{4\pi} \sigma_2 + O \left( C+ \frac{\cunder^2}{km} \right),
\\
\frac{1}{3}( 2 \mu_1 - \mu_2 - \mu_0) &= - 3 \frac{km}{4\pi} \sigma_1 + O \left( C+ \frac{\cunder^2}{km} \right).
\end{aligned}
\end{equation}
The proof is concluded by combining \eqref{Emusum} and \eqref{Emufinal}.
\end{proof}

\begin{theorem}
\label{Tcliff2}
Given $C_1> 0$, there exists an absolute constant $\cunder>0$ such that for all $(k, m) \in \N^2$ , $m$ large enough in terms of $\cunder$, and $m/k < C_1$, there exists a genus $3mk +1$, $\group^L_{\sym}$-invariant doubling of $\T$ by applying \cite[Theorem 5.7]{LDG}. 
\end{theorem}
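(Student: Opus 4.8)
The plan is to follow closely the proof of Theorem \ref{Tcliff}: we verify \cite[Assumption 5.2]{LDG} (recalled here as \ref{ALDfam}) for explicitly chosen families of data and then invoke the general existence theorem \cite[Theorem 5.7]{LDG}, making throughout the obvious simplifications afforded by the $\group^L_{\sym}$-symmetry. Concretely, with $L = L[k,m,v]$ and $\group = \group[k,m,v]$ as in \eqref{ELup}, we let the parameters $\zetabold = (\zeta, \sigma_1, \sigma_2)$ range over the box $B_{\Pcal}$ and set $\Fcal^\T_{\zetabold} = \id_\T$, $L\llbracket\zetabold\rrbracket = L$, the configuration $\taubold\llbracket\zetabold\rrbracket$ taking the value $e^{\sigma_i}\tauunder$ on $L_i$, the LD solution $\varphi\llbracket\zetabold\rrbracket = \sum_{i=0}^2 e^{\sigma_i}\tauunder\Phi_i$ of \eqref{Ephithree}, constants $\delta_p = \delta$ (a small multiple of $1/m$, as in the proof of \ref{Tcliff}), obstruction spaces defined as in \cite[Remark 3.12]{LDG}, and the linear map $Z_{\zetabold}$ of \eqref{Zzeta}. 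Granting that \ref{ALDfam} holds, \cite[Theorem 5.7]{LDG} produces a smooth embedded $\group^L_{\sym}$-invariant minimal doubling of $\T$ whose genus is $2g_\T - 1 + |L| = 1 + 3mk$ as claimed.

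The routine part of the verification is essentially as in Theorem \ref{Tcliff}. The standing hypothesis \cite[Assumption 4.1]{LDG}, that $\Lcal_\T$ has trivial kernel on $\group^L_{\sym}$-symmetric functions, follows from the analogue of \ref{Lker}(i) for the present $L$: since $\group \le \group^L_{\sym}$ and $p_0, p_1 \in L$, the computation at $p_0$ and $p_1$ together with the largeness of $m$ goes through unchanged, and this also furnishes the $\group$-invariant LD solutions $\Phi_0, \Phi_1, \Phi_2$. Items (i)--(vii) of \ref{ALDfam} are then immediate from the explicit choices above — item (vi) holding by \cite[Remark 3.12]{LDG}, $Z_{\zetabold}$ being by inspection a linear isomorphism of $\val_{\sym}[L]$ onto the parameter space, and the disjointness in \ref{con:L}(i) holding once $m$ is large — and items (a), (b), (d) hold trivially since $\Fcal^\T_{\zetabold}$ is the identity and $\taubold\llbracket\zetabold\rrbracket$ differs from the constant $\tauunder$ only by the bounded factors $e^{\sigma_i}$, with $\tauunder$ itself exponentially small in $m$.

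The substance is in items (c) and (e). For (c) one checks the uniformity conditions \ref{con:one} (\cite[Convention 3.15]{LDG}) for $\varphi\llbracket\zetabold\rrbracket$; by the symmetry it suffices to work near $p_0, p_1, p_2$. This requires estimates on the three LD solutions $\Phi_0, \Phi_1, \Phi_2$ paralleling and generalizing Lemmas \ref{Lphiave} and \ref{LPhipest}: that each rotationally invariant part $\Phi_{i,\ave}$ is positive and of size $\sim km$, and that the remainders $\Phi'_i$ are uniformly bounded in $C^j_{\sym}(\T, \gtilde)$. The assumption $m/k < C_1$ is precisely what keeps these estimates clean — the relevant angles are small and the trigonometric factors (compare $F = \sqrt 2 |v| \tan \frac{\sqrt 2 \pi}{k|v|}$ from \ref{Lphiave}) may be Taylor-expanded — and it is why the exponent $3km/(4\pi)$ in the definition of $\tauunder$ is the correct one, since $\sum_{j=0}^2 \Phi_{j,\ave}$ evaluated at a point of $L$ equals $3km/(4\pi)$ to leading order, so that this leading term cancels against $\log(\tauunder/2\delta)$ in the vertical matching; note also that, in contrast to \ref{Tcliff}, no exceptional triples $(a,b,k)$ need be excluded, positivity of the relevant leading constant being automatic in this regime. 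Granting the $\Phi_i$-estimates, conditions \ref{con:one}(i)--(vi) follow by taking $m$ large, exactly as in the proof of \ref{Tcliff}. Finally, item (e) — the prescribed-unbalancing inclusion $\zetabold - Z_{\zetabold}(\Mcal_L \varphi\llbracket\zetabold\rrbracket) \in \tfrac12 B_{\Pcal}$ — is exactly Proposition \ref{Pzb}, which yields an absolute constant $C$, independent of $\cunder$, with $\zetabold - Z_{\zetabold}(\Mcal_L\varphi) \in [-C,C] \times [-C/km, C/km]^2$; one then fixes $\cunder$ large enough in terms of $C$ so that this box lies in $\tfrac12 B_{\Pcal}$. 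With \ref{ALDfam} verified, \cite[Theorem 5.7]{LDG} applies and the theorem follows.

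I expect the main obstacle to be the estimates on $\Phi_0, \Phi_1, \Phi_2$ underlying item (c) — in particular, establishing the leading behavior $\Phi_{i,\ave} = \tfrac{km}{4\pi}(1 + o(1))$ at points of $L$ sharply enough that $\tauunder = \tfrac1m e^{\zeta} e^{-3km/(4\pi)}$ is genuinely minuscule and the matching at the points of $L$ is governed by the three parameters $(\zeta, \sigma_1, \sigma_2)$ — together with organizing the three coupled LD solutions so that the linearization $Z_{\zetabold}$ of Proposition \ref{Pzb} is the correct one. The remaining work is bookkeeping adaptation of \cite{LDG} to the three-singularity symmetric setting.
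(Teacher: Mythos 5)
Your proposal is correct and follows the same approach as the paper's proof, which is itself only a brief sketch referring back to Theorem \ref{Tcliff} and highlighting two points: that Assumption 5.2(d) survives the configuration $\taubold$ taking three distinct values (because of \eqref{Ephithree}), and that item (e) is exactly Proposition \ref{Pzb} once one observes $Z_{\zetabold}$ from \eqref{Zzeta} is a linear isomorphism. You identify both of these points, and your additional elaborations --- the analogue of Lemma \ref{Lker}(i) for the present $L$, the needed estimates on $\Phi_{i,\ave}$ and $\Phi'_i$ paralleling Lemmas \ref{Lphiave} and \ref{LPhipest}, the observation that $m/k< C_1$ forces $k$ large so that no exceptional triples $(a,b,k)$ arise and $\Phi_{i,\ave}$ has the right leading behavior $\sim km/(4\pi)$ --- are all consistent with what the paper tacitly relies on but does not spell out.
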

\begin{proof}
The proof is very similar to the proof of \ref{Tcliff} and consists of checking the hypotheses of \cite[Theorem 5.7]{LDG}, so we only give a sketch, pointing out some of the differences.  Although $\tau$ takes on three distinct values, Assumption 5.2(d) still holds because of \eqref{Ephithree}.  The map $Z_{\zetabold}$ defined in \eqref{Zzeta} is clearly a linear isomorphism for each $\zetabold \in B_{\Pcal}$, so by Proposition \ref{Pzb}, Assumption 5.2(e) holds. 
\end{proof}

\section{The first eigenspace for minimal doublings of $\Spheq$}
\label{Ssph}

\subsection*{General notation and conventions}
\phantom{ab}

Let $S$ be a closed surface embedded in the round three-sphere $(\Sph^3, g)$.  
Denote by $0= \lambda_0(S)< \lambda_1(S)< \cdots$ the eigenvalues of the Laplacian on $S$, by $\Ecal_{\lambda_i}(S)$ the eigenspace corresponding to $\lambda_i(S)$, and by $\Coord(S)$ the span of the coordinate functions on $S$.  If $S$ is minimal, the coordinate functions are well-known to be eigenfunctions with eigenvalue $2$, that is $\Coord(S) \subset \Ecal_{2}(S)$.  Throughout, we will tacitly use that the eigenspaces $\Ecal_{\lambda_i}(S)$ are mutually orthogonal with respect to the $L^2(S)$-inner product.

The goal of this section is to prove the following theorem, which proves the Yau conjecture for all  minimal $\Spheq$-doublings  constructible from our general existence result \cite[Theorem A]{LDG}.

\begin{theorem}
\label{Tsph}
Let $\Mbreve$ be a minimal doubling of an equatorial two-sphere $\Spheq$ in the round three-sphere $(\Sph^3, g)$ obtained from \cite[Theorem 5.7]{LDG}.  If $\Mbreve$ is side-symmetric, 
then the first nontrivial eigenspace of the Laplacian on $\Mbreve$ is spanned by the coordinate functions. 
\end{theorem}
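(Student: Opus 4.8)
The plan is to exploit the side-exchanging reflection to split the first eigenspace into even and odd parts, analyze each separately, and compare with the spectrum of the round equatorial sphere $\Spheq$, whose nonzero Laplace eigenvalues are $2,6,12,\dots$ and whose eigenvalue-$2$ eigenspace $\Ecal_2(\Spheq)$ is spanned precisely by the three linear coordinate functions. First I would fix coordinates on $\R^4$ so that $\Spheq=\{x_4=0\}\cap\Sph^3$ and let $\rho$ denote the reflection $(x_1,x_2,x_3,x_4)\mapsto(x_1,x_2,x_3,-x_4)$, which fixes $\Spheq$ pointwise. By the side-symmetry hypothesis $\rho$ restricts to an isometric involution of $\Mbreve$, giving an orthogonal splitting $\Ecal_\lambda(\Mbreve)=\Ecal^+_\lambda(\Mbreve)\oplus\Ecal^-_\lambda(\Mbreve)$ into $(\pm1)$-eigenspaces for every $\lambda$. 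The coordinate $x_4|_{\Mbreve}$ is odd, the coordinates $x_1|_{\Mbreve},x_2|_{\Mbreve},x_3|_{\Mbreve}$ are even, and all four are Laplace eigenfunctions of eigenvalue $2$ (as $\Mbreve$ is minimal) and linearly independent (as $\Mbreve$ spans $\R^4$), so $\Coord(\Mbreve)\subset\Ecal_2(\Mbreve)$ is $4$-dimensional and $\lambda_1(\Mbreve)\le 2$. Hence it suffices to prove that every eigenfunction of $\Mbreve$, even or odd, with eigenvalue $\lambda\le 2$ and $L^2(\Mbreve)$-orthogonal to $\Coord(\Mbreve)$ must vanish; this forces $\lambda_1(\Mbreve)=2$ and $\Ecal_{\lambda_1}(\Mbreve)=\Coord(\Mbreve)$.

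For the odd part (Proposition \ref{Cem}) I would use a Dirichlet comparison. The fixed-point set of $\rho$ on $\Mbreve$ is a disjoint union of circles, one through the waist of each catenoidal bridge, and removing them disconnects $\Mbreve$ into two pieces $\Mbreve^{\pm}$, each diffeomorphic to $\Spheq$ with finitely many disks removed. An odd eigenfunction $u$ vanishes on these circles, so $u|_{\Mbreve^+}$ solves the Dirichlet eigenvalue problem on $\Mbreve^+$ with eigenvalue $\lambda$, whence $\lambda\ge\mu_1(\Mbreve^+)$, the bottom of the Dirichlet spectrum there. Since $x_4|_{\Mbreve^+}>0$ is itself a Dirichlet eigenfunction of eigenvalue $2$, it is the ground state, so $\mu_1(\Mbreve^+)=2$ and is simple. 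Consequently $\Ecal^-_\lambda(\Mbreve)=0$ for $\lambda<2$, while an odd eigenfunction of eigenvalue $2$ is a multiple of $x_4|_{\Mbreve}$, hence vanishes if orthogonal to $\Coord(\Mbreve)$.

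For the even part (Proposition \ref{Pee}) I would transplant the eigenfunction to $\Spheq$ and invoke the spectral gap. Let $u\in\Ecal^+_\lambda(\Mbreve)$ with $\|u\|_{L^2(\Mbreve)}=1$, $\lambda\le 2$, $u\perp\Coord(\Mbreve)$, and (since $\lambda>0$) $u\perp 1$. Decompose $\Mbreve$ into its catenoidal regions and its graphical region $\Mbreve_{\mathrm{gr}}$. Lemma \ref{Lw0uest2} shows the catenoidal contributions to $\int_{\Mbreve}u^2$ and $\int_{\Mbreve}|\nabla u|^2$ are $o(1)$ as the bridges shrink; this is where evenness is essential, since an even function satisfies a Neumann condition at each bridge waist and so cannot concentrate there, whereas an odd function would be forced to grow like $x_4$. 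On $\Mbreve_{\mathrm{gr}}$, a bigraph over $\Spheq$ minus small disks with induced metric $C^1$-close to the round one, the two sheets of $u$ agree under the projection $\Pi_{\Spheq}$ by evenness, and a negligible modification near the disks then yields $w\in C^\infty(\Spheq)$ with $\|w\|_{L^2(\Spheq)}$ bounded away from $0$ and Rayleigh quotient $\le\lambda+o(1)\le 2+o(1)$. Moreover the $L^2(\Spheq)$-orthogonal projection of $w$ onto $\Ecal_0(\Spheq)\oplus\Ecal_2(\Spheq)=\Span(1,x_1,x_2,x_3)$ has norm $o(1)$, using $u\perp 1$ for the constant part and $u\perp x_i$ together with $x_i|_{\Mbreve}=x_i\circ\Pi_{\Spheq}+o(1)$ on $\Mbreve_{\mathrm{gr}}$ for the linear parts. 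But then the component of $w$ lying in the eigenspaces of $\Spheq$ with eigenvalue $\ge 6$ carries almost all of $\|w\|^2$, forcing the Rayleigh quotient of $w$ to be at least $6-o(1)$ and contradicting the bound $2+o(1)$ unless $w$, hence $u$, is zero.

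Combining the two cases gives $\lambda_1(\Mbreve)=2$ and $\Ecal_{\lambda_1}(\Mbreve)=\Ecal^+_2(\Mbreve)\oplus\Ecal^-_2(\Mbreve)=\Span(x_1,x_2,x_3)|_{\Mbreve}\oplus\R\, x_4|_{\Mbreve}=\Coord(\Mbreve)$, which is the theorem. I expect the main difficulty to be the quantitative core of the even case: establishing Lemma \ref{Lw0uest2}, that is, that the catenoidal regions are negligible for even functions---this rests on the precise geometry of the small catenoidal bridges supplied by Theorem \ref{Ttheory} together with interior elliptic estimates---and then controlling the $C^1$-closeness of $\Mbreve_{\mathrm{gr}}$ to $\Spheq$ and the near-disk modification of $w$ finely enough that every accumulated error stays $o(1)$ relative to the fixed spectral gap $6-2=4$ of $\Spheq$. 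The tight structure \eqref{EMguO} of the doubling and its holes is what makes these estimates feasible.
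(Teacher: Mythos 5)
Your proposal is correct and matches the paper's overall strategy: split $\Ecal_{\lambda_1}$ into even and odd parts under the side-exchanging reflection $\rho$, show each is contained in the corresponding part of $\Coord(\Mbreve)$, and conclude. The even-part argument is essentially identical to the paper's (transplant $u$ to a function $w$ on $\Spheq$, use Lemma~\ref{Lw0uest2} to kill the catenoidal contributions, get a near-eigenfunction on $\Spheq$ almost orthogonal to $\R \oplus \Coord(\Spheq)$, and invoke the spectral gap $\lambda_2(\Spheq)=6>2$).

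For the odd part your argument differs genuinely from the paper's. The paper (Proposition~\ref{Leven}) uses Courant's nodal domain theorem: an odd first eigenfunction has nodal set exactly $\Mbreve\cap\Spheq$, hence cannot be $L^2$-orthogonal to $x_4|_{\Mbreve}$, which has the same nodal set. You instead use a Dirichlet comparison on $\Mbreve^+=\Mbreve\cap\{x_4>0\}$: an odd eigenfunction of eigenvalue $\lambda$ restricts to a Dirichlet eigenfunction on $\Mbreve^+$, and since the positive function $x_4|_{\Mbreve^+}$ is a Dirichlet eigenfunction with eigenvalue $2$, it is the (simple) ground state, so $\lambda\ge 2$ with equality forcing $u$ proportional to $x_4$. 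Both arguments are valid. Your version is slightly stronger in scope—it directly shows there is \emph{no} odd eigenfunction with eigenvalue $<2$ and that the odd eigenspace at eigenvalue $2$ is exactly $\R\,x_4|_{\Mbreve}$, without first needing to know the eigenvalue in question is $\lambda_1$—whereas the Courant argument is tailored to the first eigenspace. The paper's Courant version is a bit more elementary (no need to set up the Dirichlet problem on a domain with boundary), but the conclusions are the same. Your logic for combining the two cases at the end is also sound and equivalent to the paper's dichotomy formulation.

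One very small inaccuracy: you write that the transplanted function $w$ is $C^\infty(\Spheq)$; in the paper's construction (Definition~\ref{dw0}) $w$ is only piecewise smooth (graphical pullback on $U$, harmonic extension on $\Omega$, continuous across the interface), which is all that is needed since only $W^{1,2}$ regularity and the Rayleigh quotient enter. This does not affect the argument.
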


\noindent By \emph{side-symmetric}, we mean that $\Mbreve$ is invariant under the reflection of $\Sph^3$ fixing $\Spheq$ pointwise. 

\begin{remark}
Note the following.
\begin{itemize}
\item The side-symmetry in Theorem \ref{Tsph} can be enforced in the application of \cite[Theorem 5.7]{LDG}; moreover, all known minimal $\Spheq$-doublings in $\Sph^3$ are side-symmetric.
\item Theorem \ref{Tsph} applies to the infinite families of $\Spheq$-doublings constructed in \cite{KapSph, KapMcG, LDG, KapZou}, but also more generally to any minimal doubling constructible using the general assumptions of \cite[Theorem 5.7]{LDG}; in particular, no symmetry beyond the side-symmetry already mentioned is required. 
\end{itemize}
\end{remark}

\begin{remark}
\label{AYau}
In the notation adopted above, Theorem \ref{Tsph} amounts to the assertion that $\Lapone(\Mbreve) = \Coord(\Mbreve)$.  Note also that $\Lapone(\Sph^2) = \Coord(\Sph^2)$ so in particular, $\lambda_1(\Sph^2) = 2$. 
\end{remark}

Before proving Theorem \ref{Tsph}, we fix some notation and conventions which will be used throughout. 

\begin{convention}
\label{con:eps}
Let $(\Sigma, N, g) = (\Spheq, \Sph^3, g)$ be the \emph{background} (recall the notation of \cite[Convention 2.1]{LDG}) consisting of an equatorial two-sphere $\Sigma = \Spheq$ embedded in the round three-sphere $(N, g) = (\Sph^3, g)$, $\Mbreve$ be a minimal $\Spheq$-doubling as in Theorem \ref{Tsph}, $\rho$ be the reflection of $\Mbreve$ fixing $\Mbreve \cap \Sph^2$, and $\epsilon >0$ be a number depending only on the background which may be taken as small as needed.

Finally, when the role of $\Mbreve$ is clear, we write $\Ecal_{\lambda_i}$ and $\Coord$ in place of $\Ecal_{\lambda_i}(\Mbreve)$ and $\Coord(\Mbreve)$. 
\end{convention}

In particular, the assumption that $\Mbreve$ is obtained using \cite[Theorem 5.7]{LDG} implies (recall  \cite[Convention 3.15(i)]{LDG}) that $\tau_{\max}$ may be taken as small as needed in terms of $\epsilon$.

\subsection*{Even-odd decompositions}
\phantom{ab}

 Because $\rho$ is an involutive isometry, it induces a linear involutive isometry $\rho^*$ of $L^2(\Mbreve)$ given by $\rho^*u =  u \circ \rho$.  Consequently, each invariant subspace $X \subset L^2(\Mbreve)$ for $\rho^*$ admits an $L^2(\Mbreve)$-orthogonal direct sum decomposition 
\begin{align}
\label{Epmdec}
X = X^+ \oplus X^-,
\quad
\text{where}
\quad
X^\pm  := \{ u \in X : \rho^* u  = \pm  u\}
\end{align}
 into subspaces of \emph{even} ($X^+$) and \emph{odd} ($X^-$) functions with respect to $\rho$. 
 
Using this notation, the conclusion of Theorem \ref{Tsph} is equivalent to the assertions that $\Ecal_{\lambda_1}^\pm = \Coord^\pm$.  The subspaces $\Ecal^+_{\lambda_1}$ and $\Ecal^-_{\lambda_1}$ will therefore be studied separately. 

\subsection*{The odd part $\Ecal^-_{\lambda_1}$}
\phantom{ab}

Because the two-sphere $\Sigma = \Sph^2 \subset \Sph^3$ lies in a $3$-dimensional subspace of $\R^4$, the subspace $\Coord^- \subset \Coord$ of odd coordinate functions is  $1$-dimensional.  We now show that $\Ccal^-$ contains $\Ecal^-_{\lambda_1}$. 
\begin{prop}
\label{Cem}
\label{Leven}
Either $\lambda_1 < 2$ and 
$\Lapone^- = \{ 0\}$, or
$\lambda_1 = 2$ and $\Ecal_{\lambda_1}^- = \Coord^-$.
\end{prop}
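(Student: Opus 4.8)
The plan is to exploit the odd symmetry together with Courant's nodal domain theorem. Let $u \in \Ecal^-_{\lambda_1}$ be a nonzero odd eigenfunction; by oddness $u$ vanishes on the fixed-point set $\Mbreve \cap \Spheq$, which is a disjoint union of circles separating $\Mbreve$ into its two ``sides'' $\Mbreve_+$ and $\Mbreve_-$ exchanged by $\rho$. I would first observe that $u$ restricted to $\Mbreve_+$ is a Dirichlet eigenfunction of the Laplacian on $\Mbreve_+$ with eigenvalue $\lambda_1$, and conversely any Dirichlet eigenfunction on $\Mbreve_+$ extends by odd reflection to an element of $\Lapone^-$. Thus the problem is reduced to identifying the first Dirichlet eigenvalue $\mu_1(\Mbreve_+)$ and its eigenspace.

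Next I would compare $\Mbreve_+$ with a hemisphere of $\Spheq$. The key geometric input is that, away from the small catenoidal bridges, the side $\Mbreve_+$ is a normal graph of a small function over (most of) a hemisphere $\Sph^2_+ \subset \Spheq$, and the bridges contribute only a small perturbation because $\tau_{\max}$ can be taken as small as needed (recall Convention \ref{con:eps} and the remark following it). The odd coordinate function on $\Spheq$ — the one vanishing on the equator bounding $\Sph^2_+$ — is a Dirichlet eigenfunction on the hemisphere with eigenvalue $2$ (since $\Lapone(\Sph^2) = \Coord(\Sph^2)$ and $\lambda_1(\Sph^2) = 2$, see Remark \ref{AYau}), and it is a standard fact that the first Dirichlet eigenvalue of the hemisphere is exactly $2$ with a one-dimensional eigenspace. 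A perturbation/continuity argument — or more precisely a two-sided estimate: an upper bound $\mu_1(\Mbreve_+) \leq 2 + o(1)$ obtained by using the restriction of the odd coordinate function of $\Mbreve$ as a test function, and a lower bound $\mu_1(\Mbreve_+) \geq 2 - o(1)$ obtained by transplanting a Dirichlet eigenfunction on $\Mbreve_+$ back to $\Sph^2_+$ and controlling the catenoidal contributions — forces $\mu_1(\Mbreve_+) = 2$ when $\tau_{\max}$ is small enough. (Here the integer-valued nature of the count, or rather the fact that $2$ is achieved exactly by a coordinate function, pins the value.) Consequently, if $\lambda_1 < 2$ then $\Lapone^- = \{0\}$, since an element of $\Lapone^-$ would give a Dirichlet eigenfunction on $\Mbreve_+$ with eigenvalue $\lambda_1 < 2 = \mu_1(\Mbreve_+)$, impossible.

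In the remaining case $\lambda_1 = 2$, I would use Courant's nodal domain theorem to show $\Lapone^- = \Coord^-$. Since $\Coord^- \subset \Ecal_2 = \Ecal_{\lambda_1}$ is one-dimensional and odd, it suffices to show $\dim \Lapone^- \leq 1$, equivalently that the first Dirichlet eigenspace of $\Mbreve_+$ is at most one-dimensional. By Courant's theorem a first Dirichlet eigenfunction on the connected domain $\Mbreve_+$ has no interior zeros, hence is of one sign; two linearly independent such eigenfunctions would each be of one sign and could be combined to produce a first Dirichlet eigenfunction changing sign, a contradiction. Therefore $\dim \Lapone^- \leq 1$, and combined with $\Coord^- \subset \Lapone^-$ this gives equality.

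The main obstacle will be the lower bound $\mu_1(\Mbreve_+) \geq 2 - o(1)$: one must transplant a normalized Dirichlet eigenfunction from the perturbed, bridge-containing surface $\Mbreve_+$ to the model hemisphere $\Sph^2_+$ while keeping the Rayleigh quotient under control, and in particular one must show the eigenfunction does not concentrate near the catenoidal necks. This is precisely the kind of catenoidal-versus-graphical energy comparison carried out in detail for the even part in Lemma \ref{Lw0uest2} and Proposition \ref{Pee}; here it is substantially easier because of the forced vanishing on $\Mbreve \cap \Spheq$ and the Dirichlet boundary condition, but the estimates near the bridges still need to be invoked. I would therefore structure the proof so that this estimate is either quoted from (or proved in parallel with) the machinery developed for Proposition \ref{Pee}.
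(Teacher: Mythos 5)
Your overall strategy — reduce to the Dirichlet problem on a side $\Mbreve_+$ via odd reflection, then use Courant's nodal domain theorem — is sound, and the one-dimensionality argument for the first Dirichlet eigenspace is correct. However, there is a genuine gap in the step where you claim $\mu_1(\Mbreve_+) = 2$ exactly: the two-sided estimate $2 - o(1) \leq \mu_1(\Mbreve_+) \leq 2$ does not pin the value, and the parenthetical remark that the exact value $2$ being ``achieved by a coordinate function'' forces equality only gives the upper bound $\mu_1(\Mbreve_+) \leq 2$, not equality. Without equality, your conclusion that $\lambda_1 < 2$ implies $\Lapone^- = \{0\}$ does not follow. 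The lower-bound transplantation argument you flag as ``the main obstacle'' is in fact unnecessary, and does not resolve the gap even if carried out.

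The fix is the same sign consideration you already deploy for the dimension bound, but applied earlier: the first Dirichlet eigenfunction $u_0$ on the connected domain $\Mbreve_+$ is of one sign by Courant; the odd coordinate function $v$, restricted to $\Mbreve_+$, vanishes precisely on $\partial \Mbreve_+ = \Mbreve \cap \Spheq$ and is therefore also of one sign on $\Mbreve_+$, and is a Dirichlet eigenfunction with eigenvalue $2$. If $\mu_1(\Mbreve_+) < 2$, then $u_0$ and $v|_{\Mbreve_+}$ would be $L^2(\Mbreve_+)$-orthogonal, which is impossible for two functions of fixed sign. Hence $\mu_1(\Mbreve_+) = 2$. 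With this in hand, your remaining logic goes through, and in particular no gluing estimate from Lemma \ref{Lw0uest2} or anything resembling it is needed.

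This is essentially the paper's argument, phrased on the quotient domain rather than on $\Mbreve$ itself. The paper works directly with $u \in \Lapone^-$ on $\Mbreve$: Courant forces the nodal set of $u$ to equal $\Mbreve \cap \Spheq$, which is also the nodal set of $v \in \Coord^-$; then $\int_{\Mbreve} uv = 2\int_{M_+} uv \neq 0$ by sign-definiteness on a side, so $u$ and $v$ cannot be orthogonal. Since eigenspaces with distinct eigenvalues are orthogonal and $v \in \Ecal_2$, this forces either $\Lapone^- = \{0\}$ (if $\lambda_1 < 2$) or $\lambda_1 = 2$ and $\Lapone^- \subset \Coord^-$. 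The two formulations are equivalent, but the paper's avoids introducing the Dirichlet problem and the hemisphere comparison altogether, making clear that no $\tau_{\max}$-smallness or perturbation machinery enters this part of the argument at all — it is a pure nodal-domain/symmetry statement.
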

\begin{proof}
It clearly suffices to prove that $\Ecal^-_{\lambda_1} \subset \Coord^-$.  To this end, suppose $u \in \Lapone^-$ is nonzero.
  Since $u$ is odd, its nodal set contains $\Mbreve \cap \Sph^2$; furthermore, since $\Mbreve \cap \Sph^2$ separates $\Mbreve$ and $u$ is a first eigenfunction, the Courant nodal domain theorem \cite{Courant} implies its nodal set is equal to $\Mbreve \cap \Sph^2$.

On the other hand, the nodal set of any nonzero odd coordinate function $v \in \Coord^-$ is also equal to $\Mbreve \cap \Sph^2$.  Now let $M_+$ be one of the components of $\Mbreve \setminus \Mbreve \cap \Sph^2$.  Using the oddness of $u$ and $v$, we find
\begin{align*}
\int_{\Mbreve} u v d\sigma  = \int_{M_+} u v d\sigma + \int_{\rho(M_+)} u v d\sigma = 2 \int_{M_+} u v d\sigma \neq 0.
\end{align*}  
Therefore, $u$ and $v$ are not $L^2(\Mbreve)$-orthogonal, which implies that $\Ecal^-_{\lambda_1} \subset \Coord^-$. 
\end{proof}

\subsection*{The even part $\Ecal^+_{\lambda_1}$}
\phantom{ab}

The study of $\Ecal^+_{\lambda_1}$ uses a decomposition of $\Mbreve$ into \emph{catenoidal} and \emph{graphical} regions, whose projections to $\Sigma$ are defined below.  In the definition, $\alpha> 0$ denotes a small number (recall \cite[Convention 2.10]{LDG}) from the construction of $\Mbreve$ in \cite[Theorem 5.7]{LDG} which depends only on the background $(\Spheq, \Sph^3, g)$.

\begin{definition}
\label{dOmegap}
For each $p \in L$, define $\Omegap = D^\Sigma_p(\tau^{2\alpha}_p)$, $\Omega = \cup_{p \in L} \Omega_p$, and $U = \Sigma \setminus \Omega$. 
\end{definition}

Recall from \cite[Theorem 5.7]{LDG} that $\Mbreve$ is the union of the graphs of functions $\ubreve^+$ and $-\ubreve^-$, where the functions $\ubreve^\pm$ are defined on $\Sigmabreve : = \Sigma \setminus \sqcup_{p \in L} \Dbreve_p$ for $\Dbreve_p$ a small perturbation of the disk $D^\Sigma_p (\tau_p)$ and satisfy 
\begin{align}
\label{Eubreve}
\| \ubreve^\pm : C^{2, \beta}(U) \| \lesssim \tau^{8/9}_{\max}.
\end{align}
Note the symmetry assumption (recall \ref{con:eps}) implies $\ubreve^+  = \ubreve^-$. 
We denote by $X_{\pm}: \Sigmabreve \rightarrow \Mbreve$ the maps parametrizing the graphs of $\ubreve$ and $-\ubreve^-$.

\begin{definition}[The approximate eigenfunction $w$]
\label{dw0}
Given $u \in \Lapone^+$, 
define a piecewise smooth function $w$ on $\Sigma$ by requesting that
\begin{align}
\label{Ew0def}
w = \Xp^* u = \Xm^* u
\quad
\text{on} 
\quad
U
\quad
\text{and}
\quad
\Delta w = 0
\quad
\text{on} 
\quad
\Omega
\end{align}
and a decomposition $w = \wlow + \whigh$, where $\wlow$ is the projection of $w$ onto $\Ecal_{\lambda_0}(\Sigma)\oplus \Ecal_{\lambda_1}(\Sigma)$.
 \end{definition}
 
\noindent Note that $w$ is well-defined in \ref{dw0} because the evenness of $u$ and the symmetry imply $\Xp^* u = \Xm^* u$.

The next lemma shows that the $L^2$ norms of $w$ and $d w$ on the graphical region $U$ are well-approximated by the corresponding norms of $u$ and $d u$ on $X_{\pm}(U)$. 
\begin{lemma}
\label{Lw0uest1}
For $u$ and $w$ as in \ref{dw0}, the following hold. 
\begin{enumerate}[label=\emph{(\roman*)}]
	\item $\| w \|_{L^2( U)} \Sim_{1+C\epsilon} \| u\|_{L^2( \Xpm(U))}$. 
	\item $\| d w \|_{L^2( U)} \Sim_{1+C\epsilon} \| d u\|_{L^2(\Xpm(U))}$.
	\end{enumerate}
\end{lemma}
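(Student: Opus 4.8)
The plan is to deduce both estimates from a single fact: on the graphical region $U$, each graph map $\Xpm$ restricts to a quantitative near-isometry from $U$ onto its image in $\Mbreve$, with metric distortion controlled by $\epsilon$. Granting this, (i) and (ii) will follow immediately from the change of variables induced by $\Xpm$, since $w = \Xpm^* u$ on $U$ by Definition \ref{dw0} and therefore $\Xpm^*(du) = d(\Xpm^* u) = dw$ on $U$ as well. The main work is thus to prove the near-isometry statement; the rest is bookkeeping.

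To establish the near-isometry, I would start from \eqref{Eubreve}, which gives $\| \ubreve^\pm : C^{2,\beta}(U)\| \lesssim \tau_{\max}^{8/9}$, together with the remark following Convention \ref{con:eps} that $\tau_{\max}$ may be taken as small as needed in terms of $\epsilon$; in particular the $C^1(U)$ norms of $\ubreve^\pm$ are $O(\epsilon)$. Since $\Xpm$ is the graph of $\pm\ubreve^\pm$ over $\Sigma$ in $N = \Sph^3$ built via the normal exponential map, the metric on $U$ pulled back through $\Xpm$ from the induced metric on $\Mbreve$ depends smoothly on the $1$-jet of the graphing function and reduces to $g|_U$ when that function vanishes; hence it differs from $g|_U$ by $O(\epsilon)$ in $C^0(U)$. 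Consequently the pulled-back volume form is $(1+C\epsilon)$-comparable to $dv_g$ pointwise on $U$, and likewise the pulled-back cometric evaluated on any fixed $1$-form is $(1+C\epsilon)$-comparable to the $g$-cometric. Along the way I would also note that $U \subset \Sigmabreve$, since $\Dbreve_p \subset D^\Sigma_p(\tau_p(1+\tau_p^{8/9})) \subset \Omegap$ once $\tau_p$ is small, so $\Xpm$ is genuinely defined and an embedding on $U$, and the uniformity in $p\in L$ is automatic because the $C^1$ bound holds on all of $U$ at once.

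With the near-isometry in hand, both parts are a change of variables. For (i), pulling back the integral of $u^2$ over $\Xpm(U)$ by the diffeomorphism $\Xpm|_U$ and using $\Xpm^* u = w$ together with the volume-form comparison gives $\| u\|_{L^2(\Xpm(U))}^2 \Sim_{1+C\epsilon} \| w\|_{L^2(U)}^2$, and taking square roots (absorbing the $(1+C\epsilon)^{1/2}$ factor into $1+C\epsilon$) yields the claim. For (ii) one repeats the computation with $|du|^2$ in place of $u^2$, using in addition $\Xpm^*(du) = dw$ and the cometric comparison to replace the norm of $dw$ with respect to the pulled-back metric by its $g$-norm up to a $(1+C\epsilon)$ factor. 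The only step with genuine content is the near-isometry estimate of the second paragraph; given the explicit graph description of $\Mbreve$ from \cite[Theorem 5.7]{LDG} and the bound \eqref{Eubreve}, this is routine, so I do not anticipate any real obstacle beyond keeping the $\epsilon$-dependence honest.
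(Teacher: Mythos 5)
Your proof is correct and follows essentially the same route as the paper: pull back the integral over $\Xpm(U)$ via the diffeomorphism $\Xpm|_U$, use $\Xpm^* u = w$ (and $\Xpm^*(du)=dw$), and control the resulting volume-form/cometric discrepancy by the $C^1$ smallness of $\ubreve^\pm$ from \eqref{Eubreve}. The paper phrases this as the identity $\| w \|^2_{L^2(U)} = \| u \|^2_{L^2(\Xpm(U))} + \int_{U} w^2 ( d\sigma - \Xpm^* d\sigma)$ with the error integral estimated by \eqref{Eubreve}, which is exactly the near-isometry comparison you spell out in more detail.
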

\begin{proof}
Recalling from \ref{dw0} that $\Xpm^* u = w$, we have
\begin{align*}
\| w \|^2_{L^2(U)} = \| u \|^2_{L^2 ( \Xpm(U))} + \int_{U} w^2 ( d\sigma - \Xpm^* d\sigma).
\end{align*}
Item (i) follows from this after using \eqref{Eubreve} to estimate the second integral.  The proof of (ii) is very similar and is omitted. 
\end{proof}

Next, we show that the $L^2$ norms of $u$ and $d u$ do not concentrate on the catenoidal regions $\Pi^{-1}_{\Sigma}(\Omega)$, and analogously that the $L^2$ norms of $w$ and $d w$ do not concentrate on $\Omega$.  
\begin{lemma}
\label{Lw0uest2}
For $u$ and $w$ as in \ref{dw0}, the following hold. 
\begin{enumerate}[label=\emph{(\roman*)}]
	\item $\| u \|_{L^2(\Pi^{-1}_\Sigma(\Omega))} \lesssim \epsilon \| u \|_{L^2(\Mbreve)}$. 
	\item $\| d u \|_{L^2(\Pi^{-1}_\Sigma(\Omega))} \lesssim \epsilon \| d u \|_{L^2(\Mbreve)}$. 
	\item $\| w \|_{L^2(\Omega)} \lesssim \epsilon \| w \|_{L^2(\Sigma)}$. 
	\item $\| d w  \|_{L^2(\Omega)} 
	 \lesssim \epsilon \| d w \|_{L^2(\Sigma)}$. 
	\end{enumerate}
\end{lemma}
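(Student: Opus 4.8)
The plan is to localize all four estimates to the catenoidal regions $\Pi_\Sigma^{-1}(\Omegap)$, $p\in L$, and to reduce the analysis there to trace-type control of $w$ on the graphical collars $\Omegap':=D^\Sigma_p(2\tau_p^{2\alpha})\setminus\Omegap\subset U$. Two elementary facts are used throughout. First, $\lambda_1=\lambda_1(\Mbreve)\in[1,2]$: the lower bound is the Choi--Wang estimate \cite{ChoiWang} (valid since $\Mbreve$ is minimal) and the upper bound holds since $\Coord(\Mbreve)\subset\Ecal_2(\Mbreve)$; hence, normalizing $\|u\|_{L^2(\Mbreve)}=1$ for $u\in\Lapone^+$, we have $\|du\|^2_{L^2(\Mbreve)}=\lambda_1$ comparable to $1$. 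Second, for each $p$ the region $\Pi_\Sigma^{-1}(\Omegap)$ is a catenoidal-type neck which in conformal coordinates is a flat cylinder $\Sph^1\times[-T_p,T_p]$ with $T_p$ comparable to $\log\tfrac1{\tau_p}$ and conformal area density $\phi_p^2\le C\tau_p^{4\alpha}$, on which the eigenvalue equation reads $\Delta_{\mathrm{flat}}u=-\lambda_1\phi_p^2 u$, an $O(\tau_p^{4\alpha})$-perturbation of the Laplace equation up to the $O(\tau_p^{8/9})$ metric distortion of \eqref{Eubreve}; in particular $|\Pi_\Sigma^{-1}(\Omega)|\lesssim|L|\,\tau_{\max}^{4\alpha}$, and $\Mbreve=\Xpm(U)\sqcup\Pi_\Sigma^{-1}(\Omega)$.

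For (i) and (ii) I would work on each such cylinder with the $\avg/\osc$ decomposition $u=u_\ave(t)+u_\osc$ of \ref{dave}, now averaging over the $\Sph^1$ factor. For $u_\osc$: the angular Poincar\'e inequality gives $\int\phi_p^2\,u_\osc^2\le C\tau_p^{4\alpha}\int(\partial_\theta u_\osc)^2$, and testing $\Delta_{\mathrm{flat}}u_\osc+\lambda_1\phi_p^2 u_\osc=0$ against $\zeta^2 u_\osc$ for a cutoff $\zeta$ whose transition lies in the graphical collar yields $\int_{\mathrm{cyl}}|\nabla u_\osc|^2\lesssim\|dw\|^2_{L^2(\Omegap')}$ after absorbing the small zeroth-order term. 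For $u_\ave$: evenness of $u$ under the side-exchanging reflection forces $u_\ave'(0)=0$, and integrating the ODE $u_\ave''=-\lambda_1\phi_p^2 u_\ave$ (whose coefficient has small integral) gives $\sup|u_\ave|\lesssim|u_\ave(T_p)|$ and $|u_\ave'|\lesssim\tau_p^{4\alpha}\sup|u_\ave|$, where $|u_\ave(T_p)|^2\le\fint_{\partial\Omegap}w^2$ is a trace of $w$. Using conformal invariance of the Dirichlet integral on the cylinder, $\phi_p^2\le C\tau_p^{4\alpha}$, summation over $p$, and the collar estimates of the last paragraph, one gets $\|u\|^2_{L^2(\Pi_\Sigma^{-1}(\Omega))}+\|du\|^2_{L^2(\Pi_\Sigma^{-1}(\Omega))}$ bounded by a quantity that tends to $0$ as $\tau_{\max}\to0$; by \ref{con:eps} this is $\le\epsilon^2$, and with the normalization and $\lambda_1\ge1$ this gives (i) and (ii).

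For (iii) and (iv): since $w$ is harmonic on each $\Omegap$ with boundary values the trace of $w|_U=\Xpm^*u$, the maximum principle gives $\|w\|_{L^\infty(\Omegap)}=\|w\|_{L^\infty(\partial\Omegap)}$, whence $\|w\|^2_{L^2(\Omega)}\lesssim\sum_p\tau_p^{4\alpha}\|w\|^2_{L^\infty(\partial\Omegap)}$; and the harmonic extension minimizes Dirichlet energy over $\Omegap$ among competitors with its boundary values, so comparing with the competitor obtained by reflecting $w|_{\Omegap'}$ inward across $\partial\Omegap$ gives $\|dw\|^2_{L^2(\Omegap)}\lesssim\|dw\|^2_{L^2(\Omegap')}$. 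Summing with the collar estimates, $\|w\|^2_{L^2(\Omega)}+\|dw\|^2_{L^2(\Omega)}\lesssim\epsilon^2$. On the other hand, Lemma \ref{Lw0uest1} together with parts (i), (ii) already proved gives $\|w\|_{L^2(U)}\Sim_{1+C\epsilon}\|u\|_{L^2(\Xpm(U))}$ with $\|u\|^2_{L^2(\Xpm(U))}=1-\|u\|^2_{L^2(\Pi_\Sigma^{-1}(\Omega))}\ge\tfrac12$, hence $\|w\|^2_{L^2(\Sigma)}\ge\tfrac13$, and likewise $\|dw\|^2_{L^2(\Sigma)}\ge\|du\|^2_{L^2(\Xpm(U))}=\lambda_1-\|du\|^2_{L^2(\Pi_\Sigma^{-1}(\Omega))}\ge\tfrac12$; dividing yields (iii) and (iv).

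The main obstacle is the collar estimate --- that a low eigenfunction cannot concentrate, in $L^2$ or in energy, on the annuli $\Omegap'$ of radius $\sim\tau_p^{2\alpha}$ about the bridges: concretely $\sum_p\|w\|^2_{L^2(\Omegap')}+\sum_p\tau_p^{4\alpha}\|w\|^2_{L^\infty(\partial\Omegap)}\lesssim\tau_{\max}^{4\alpha}\log\tfrac1{\tau_{\max}}$ and $\sum_p\|dw\|^2_{L^2(\Omegap')}\lesssim(\log\tfrac1{\tau_{\max}})^{-1}$. I would obtain these on the much larger annular regions $D^\Sigma_p(c_0)\setminus\Omegap$ (conformal modulus comparable to $\log\tfrac1{\tau_p}$, for a fixed small $c_0$), where $\|w\|_{L^\infty(\partial D^\Sigma_p(c_0))}\lesssim1$ by an interior elliptic estimate in the bounded-geometry part of $\Mbreve$, by splitting $w$ into rotationally invariant and oscillatory parts: the oscillatory part is a near-harmonic function on a long annulus with bounded data at the outer boundary, hence decays toward $\partial\Omegap$, so $|w_\osc|+|\nabla w_\osc|\lesssim\tau_p^{2\alpha}$ on $\Omegap'$; the rotationally invariant part is to leading order $A+B\log\rho$ with $B^2\log\tfrac1{\tau_p}\lesssim\|dw\|^2_{L^2(U)}\lesssim1$, so it is $O((\log\tfrac1{\tau_p})^{1/2})$ in $L^\infty(\partial\Omegap)$ and contributes $\lesssim(\log\tfrac1{\tau_p})^{-1}$ to the collar energy. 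Making this rigorous --- propagating the outer bound inward through the long annulus while controlling the perturbation terms, and matching the catenoidal and graphical descriptions near $\partial\Omegap$ --- is where the real work lies; the requisite decay estimates are of the same type as those established for LD solutions in \cite{LDG}.
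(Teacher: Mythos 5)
Your overall plan is built around the same intuition as the paper (harmonic functions on a long catenoidal neck or a small disk concentrate their $L^2$ mass and energy near the outer boundary, and evenness kills the linear-in-$\sss$ mode), but the architecture is genuinely different, and the difference is exactly what lets the paper avoid the step you flag as ``where the real work lies.'' You localize at the small scale $\runder = \tau_p^{2\alpha}$, reduce the catenoidal contribution to the trace of $w$ at $\partial\Omegap$ (via the max principle for (iii), the ODE for the average mode, and a Caccioppoli estimate for $u_{\osc}$), and then need a separate collar estimate bounding $\|w\|_{L^\infty(\partial\Omegap)}$ and the collar energy, which requires an absolute $L^\infty$ bound at a fixed outer radius $c_0$, the $A+B\log\rho$ asymptotics, and a normalization plus the Choi--Wang lower bound $\lambda_1\ge 1$. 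The paper instead compares the pullback $\ucheck$ to a $\gcir$-harmonic extension $\ucir$ on the \emph{enlarged} catenoidal region $\cat(\rtop)$ with $\rtop=\tau_p^\alpha$, matching boundary values at radius $\rtop$ rather than $\runder$. Because $\rtop/\runder = \tau_p^{-\alpha}$ is large, the one-line mode estimate \eqref{Emodes} on $v_k=\cosh(k\sss)e^{ik\theta}$ (even in $\sss$, which is where the evenness enters) yields $\|\ucir\|_{L^2(\cat(\runder))}\lesssim\epsilon\|\ucir\|_{L^2(\cat(\rtop))}$ and the same for $d\ucir$. Chaining $\|u\|_{L^2(\Pi^{-1}(\Omegap))}\sim\|\ucheck\|\sim\|\ucir\|_{\cat(\runder)}\lesssim\epsilon\|\ucir\|_{\cat(\rtop)}\sim\|u\|_{L^2(\Pi^{-1}(D_p(\rtop)))}$ and summing by disjointness of the $D_p(\rtop)$ gives (i)--(ii) directly as ratios, with no normalization, no Choi--Wang, and no $L^\infty$ or flux estimates; (iii)--(iv) are handled identically on the disk. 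In short, by enlarging the comparison domain from radius $2\tau_p^{2\alpha}$ to $\tau_p^\alpha$, the paper converts your collar estimate into a triviality about $\cosh(k\sss)$.

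Beyond the acknowledged gap in the collar estimate, two concrete problems: the ``reflecting $w|_{\Omegap'}$ inward'' competitor in (iv) is not defined on all of $\Omegap$ --- inversion of the fixed-ratio annulus $\Omegap'=D_p(2\runder)\setminus\Omegap$ through $\partial\Omegap$ covers only $\runder/2<|z|<\runder$, so you need either a wider collar or an explicit Fourier/trace argument there; and the summability claim $\sum_p \tau_p^{4\alpha}\|w\|^2_{L^\infty(\partial\Omegap)}\lesssim\tau_{\max}^{4\alpha}\log\tfrac1{\tau_{\max}}$ is asserted without controlling the possibly large number of points $|L|$. The paper's version never needs to sum quantities with an absolute size --- each catenoidal piece is compared only to the corresponding $D_p(\tau_p^\alpha)$, and these are disjoint, so $|L|$ never enters.
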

\begin{proof}

Fix $p \in L$, define $\runder= \tau^{2 \alpha}_p$, $\rtop = \tau^{\alpha}_p$ and for any $r \in [\runder,  \rtop]$ domains  $ \cat_{r}$ and $ \cat(r)$ of $\cat=  \cat[p, \tau_p, \kappaunder_p] \subset M\llbracket \zetaboldhatunder\rrbracket$ by
\begin{align*}
\begin{gathered}
\cat_r =(X^{N, g}_{M, \upphihat})^{-1}(\Mhat \cap \Pi^{-1}_\Sigma ( D^\Sigma_p(r))),
\\
\cat(r) = \cat \cap \Pi_\Sigma^{-1} ( D^\Sigma_p(r)),
\end{gathered}
\end{align*}
where the catenoidal bridge $\cat$ was defined in \cite[Definition 2.14]{LDG}, the initial surface $M = M\llbracket \zetaboldhatunder\rrbracket$ and perturbation function $\upphihat$ are as in \cite[Theorem 5.7]{LDG}, and the parametrization $X^{N, g}_{M, \upphihat} : M \rightarrow \Mbreve$ is as in \cite[Notation 1.2(viii)]{LDG}.  Define also $\ucheck \in M$ by $\ucheck = (X^{N, g}_{M, \upphihat})^* u$, and let $\gpeuc: = (\exp^{\Sigma, N, g}_p)_* \, g|_p $ be the metric induced by the Euclidean metric on $T_p N$ through the Fermi exponential map $\exp^{\Sigma, N,g}_p$ (recall \cite[Definition 2.2]{LDG}).
Using the smallness of $\upphihat$ from \cite[Theorem 5.7]{LDG},  the corresponding closeness of $\cat(\runder)$ to $\cat_{\runder}$, and the smallness of $g - \gpeuc$ in \cite[Lemma 2.28]{LDG}, it follows \cite[Lemma C.12]{LDG} that
\begin{equation}
\label{Eucheck0}
\begin{gathered}
\| u \|_{L^2( \Pi^{-1}_\Sigma ( \Omegap ), g)}
 \Sim_{1+C\epsilon} 
 \| \ucheck \|_{L^2(\cat(\runder), \gpeuc)},
 \\ 
 \| \nabla u \|_{L^2( \Pi^{-1}_\Sigma ( \Omegap ), g)}
 \Sim_{1+C\epsilon} 
  \| \nabla \ucheck \|_{L^2(\cat(\runder), \gpeuc)}.
  \end{gathered}
\end{equation}

Now let $\ucir$ be the $\gcir$-harmonic function on $\cat(\rtop)$ with $\ucir = \ucheck$ on $\partial \cat(\rtop)$.  Then $\ucheck - \ucir$ satisfies
\begin{equation*}
\begin{cases}
\Delta_{\gpeuc} ( \ucheck - \ucir) 
=E
\hfill
&\text{on}
\quad \cat(\rtop),
\\
\ucheck - \ucir = 0
\hfill
&\text{on}
\quad \partial \cat(\rtop),
\end{cases}
\end{equation*}
where $E = ( \Delta_{\gpeuc} - \Delta_{\gcheck}) \ucheck- \lambda_1(\Mbreve)\ucheck $.
Then for any $r \in [\runder, \rtop]$, scaling the metric on $\cat(r)$ to be of unit size and estimating the smallness of the inhomogeneous term, we use standard elliptic theory to estimate $\ucheck - \ucir$ and conclude 
\begin{equation}
\label{Eucheck}
\begin{gathered}
\| \ucheck \|_{L^2(\cat(r), \gpeuc)}
\Sim_{1+C\epsilon}
\| \ucir \|_{L^2(\cat(r), \gcir)},
\\
\| d \ucheck \|_{L^2(\cat(r), \gpeuc)}
\Sim_{1+C\epsilon}
\| d \ucir \|_{L^2(\cat(r), \gcir)}.
\end{gathered}
\end{equation}

To estimate $\ucir$, consider the space $L^2_{\sym}( \cat, \gcir)$ and complete orthogonal set $\{v_k\}_{k \in \Z}$, where
\begin{align*}
L^2_{\sym}( \cat, \gcir) = \{v \in  L^2(\cat, \gcir) : \rho^*v = v\},
\quad
v_k(\sss, \theta) = \cosh(k \sss) e^{ik \theta},
\end{align*}
and $(\sss, \theta)$ are the standard coordinates on $\cat$ (recall \cite[Notation 2.3]{LDG}).  Now fix $k \in \Z$.  
A straightforward calculation using the largeness of $\rtop/ \runder = \tau^{-\alpha}_p$ shows
\begin{equation}
\label{Emodes}
\begin{gathered}
\| v_k \|_{L^2( \cat(\runder), \gcir )} \lesssim \epsilon \| v_k \|_{L^2( \cat(\rtop), \gcir)},
\\
\| d v_k \|_{L^2( \cat(\runder), \gcir )} \lesssim \epsilon \| d v_k \|_{L^2( \cat(\rtop), \gcir)}.
\end{gathered}
\end{equation}
Since $\rho^* \ucir = \ucir$, expanding $\ucir$ in terms of the $v_k$ and using the Pythagorean theorem with \eqref{Emodes} shows
\begin{equation}
\label{Eubr}
\begin{gathered}
\| \ucir \|_{L^2( \cat(\runder), \gcir )} \lesssim \epsilon \| \ucir \|_{L^2( \cat(\rtop), \gcir)},
\\
\| d \ucir \|_{L^2( \cat(\runder), \gcir )} \lesssim \epsilon \| d \ucir \|_{L^2( \cat(\rtop), \gcir)},
\end{gathered}
\end{equation}
and combining \eqref{Eucheck0}, \eqref{Eucheck}, and \eqref{Eubr} establishes
\begin{equation}
\begin{gathered}
\| u \|_{L^2(\Pi^{-1}_\Sigma(\Omegap), g)} 
\lesssim
\epsilon
\| u\|_{L^2(\Pi^{-1}_\Sigma(D^{\Sigma}_p(\tau_p^{\alpha})))},
\\
\| d u \|_{L^2(\Pi^{-1}_\Sigma(\Omegap), g)} 
\lesssim
\epsilon
\|d u\|_{L^2(\Pi^{-1}_\Sigma(D^{\Sigma}_p(\tau_p^{\alpha})))}.
\end{gathered}
\end{equation}
Since any two disks $D^\Sigma_p(\tau^\alpha_p), D^\Sigma_q(\tau^\alpha_q)$ are disjoint for distinct $p, q\in L$, items (i)-(ii) follow.

By very similar considerations, we have
\begin{align*}
\| w \|_{L^2(\Omegap, g)} 
\Sim_{1+C\epsilon} \| w \|_{L^2( \Omegap, \gpeuc)}
\Sim_{1+C\epsilon} \| \wcir \|_{L^2( \Omegap, \gpeuc)},
\end{align*}
where $\wcir$ is the $\gcir$-harmonic function on $\Omegap$ with $\wcir = w$ on $\partial \Omegap$,
and another straightforward estimate using separation of variables shows
\begin{align}
\| \wcir \|_{L^2( \Omegap)} 
\lesssim
\epsilon \| \wcir \|_{L^2( D^\Sigma_p( \tau_p^\alpha))}. 
\end{align}
It follows in the same way as in the proof of (i) that (iii) holds.  Finally, the proof of (iv) is extremely similar, so we omit the details. 
\end{proof}

Using the estimates established in \ref{Lw0uest1} and \ref{Lw0uest2}, we now establish an upper bound for the Rayleigh quotient of $w$. 

\begin{cor}
\label{Cray0}
For $u$ and $w$ as in \ref{dw0}, the following hold.
\begin{enumerate}[label=\emph{(\roman*)}]
\item $\| d w \|^2_{L^2(\Sigma)}
\leq 
(1+ C\epsilon) \lambda_1(\Sigma) \| w \|^2_{L^2(\Sigma)}.$
\item $|2 \langle w , v \rangle_{L^2(\Sigma)} 
	-  \langle u, v \rangle_{L^2(\Mbreve)}|
	 \lesssim \epsilon \| w \|_{L^2(\Sigma)}  \| v \|_{C^1(N)}$ for any $v \in C^\infty(N)$.
\item $\| \wlow\|_{L^2(\Sigma)}  \lesssim \epsilon\| w \|_{L^2(\Sigma)}$, if $u \in \Coord^\perp$. 
\end{enumerate}
\end{cor}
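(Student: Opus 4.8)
The plan is to establish the three items in order, each drawing on its predecessors, with every estimate obtained by feeding Lemmas \ref{Lw0uest1} and \ref{Lw0uest2} into the eigenfunction identity $\| d u\|^2_{L^2(\Mbreve)} = \lambda_1(\Mbreve)\| u\|^2_{L^2(\Mbreve)}$ together with the bound $\lambda_1(\Mbreve)\le 2 = \lambda_1(\Sigma)$ from \ref{Leven} and Remark \ref{AYau}. For item (i) I would first record a preliminary fact used throughout: since $u$ is even and $\Mbreve$ is side-symmetric (so $\ubreve^+ = \ubreve^-$ and $X_-(U) = \rho(X_+(U))$), one has $\| u\|^2_{L^2(\Pi^{-1}_\Sigma(U)\cap\Mbreve)} = 2\| u\|^2_{L^2(X_+(U))}$, likewise for $d u$, and hence, combining with Lemma \ref{Lw0uest1} and Lemma \ref{Lw0uest2}(i)--(ii), the two-sided comparison $\| w\|_{L^2(\Sigma)}\Sim_{1+C\epsilon} 2^{-1/2}\| u\|_{L^2(\Mbreve)}$. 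Then I would split $\| d w\|^2_{L^2(\Sigma)} = \| d w\|^2_{L^2(U)} + \| d w\|^2_{L^2(\Omega)}$, absorb the second summand into the left side using Lemma \ref{Lw0uest2}(iv), and estimate $\| d w\|^2_{L^2(U)}\Sim_{1+C\epsilon}\| d u\|^2_{L^2(X_+(U))} = \tfrac12\| d u\|^2_{L^2(\Pi^{-1}_\Sigma(U)\cap\Mbreve)}\le \tfrac12\| d u\|^2_{L^2(\Mbreve)}$ via Lemma \ref{Lw0uest1}(ii); substituting $\| d u\|^2_{L^2(\Mbreve)} = \lambda_1(\Mbreve)\| u\|^2_{L^2(\Mbreve)}$, then $\lambda_1(\Mbreve)\le\lambda_1(\Sigma)$ and the norm comparison above, and collecting the $(1+C\epsilon)$ factors gives (i).

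For item (ii) the crucial step is to reduce to a $\rho$-symmetric test function. Writing $v = v_e + v_o$ for the even and odd parts with respect to the reflection $\Rcapunder_{\Spheq}$ of $\Sph^3$ fixing $\Spheq$ pointwise (which restricts to $\rho$ on $\Mbreve$), the oddness of $v_o$ together with the evenness of $u$ forces $\langle u, v_o\rangle_{L^2(\Mbreve)} = 0$, while $v_o$ vanishes identically on $\Spheq$ so $\langle w, v_o\rangle_{L^2(\Sigma)} = 0$; since also $\| v_e\|_{C^1(N)}\le\| v\|_{C^1(N)}$, I may assume $v\circ\Rcapunder_{\Spheq} = v$. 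Then $\langle u, v\rangle_{L^2(\Mbreve)} = 2\int_{X_+(U)} uv\, d\sigma + \int_{\Pi^{-1}_\Sigma(\Omega)\cap\Mbreve} uv\, d\sigma$; the second integral is $\lesssim \epsilon\| v\|_{C^0(N)}\| u\|_{L^2(\Mbreve)}\lesssim\epsilon\| v\|_{C^1(N)}\| w\|_{L^2(\Sigma)}$ by Lemma \ref{Lw0uest2}(i) and the crude bound $|\Pi^{-1}_\Sigma(\Omega)\cap\Mbreve|\lesssim 1$, and the first, pulled back by $X_+$, equals $2\int_U w\,(v\circ X_+)\,X_+^* d\sigma_{\Mbreve}$. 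Because $\| \ubreve^+ : C^{2,\beta}(U)\|\lesssim\tau^{8/9}_{\max}$, the map $X_+$ is $C^1$-close to the inclusion $U\hookrightarrow N$, so $v\circ X_+ = v|_U + O(\tau^{8/9}_{\max}\| v\|_{C^1(N)})$ and $X_+^* d\sigma_{\Mbreve} = (1+O(\tau^{8/9}_{\max}))\,d\sigma_\Sigma$; hence this term equals $2\int_U wv\, d\sigma_\Sigma + O(\tau^{8/9}_{\max}\| v\|_{C^1(N)}\| w\|_{L^2(\Sigma)})$, and replacing $\int_U$ by $\int_\Sigma$ costs $O(\epsilon\| v\|_{C^0(N)}\| w\|_{L^2(\Sigma)})$ by Lemma \ref{Lw0uest2}(iii). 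Since $\tau_{\max}$ may be taken small in terms of $\epsilon$ (recall the remark after \ref{con:eps}), summing these errors yields (ii).

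For item (iii), recall that $\wlow$ is the $L^2(\Sigma)$-orthogonal projection of $w$ onto $\Ecal_{\lambda_0}(\Sigma)\oplus\Ecal_{\lambda_1}(\Sigma)$, which on $\Sigma = \Spheq$ is spanned by the constant function and the restrictions to $\Spheq$ of the three ambient coordinate functions not vanishing there (recall $\lambda_1(\Sph^2) = 2$), all mutually orthogonal with fixed positive $L^2(\Sigma)$-norms. I would apply (ii) with $v = 1$, using $\langle u, 1\rangle_{L^2(\Mbreve)} = 0$ since $u$ is a $\lambda_1(\Mbreve)$-eigenfunction with $\lambda_1(\Mbreve)>0$, to bound the constant component of $w$; and apply (ii) with $v$ each of those three coordinate functions, using $\langle u, v|_{\Mbreve}\rangle_{L^2(\Mbreve)} = 0$ since $u\in\Coord(\Mbreve)^\perp$, to bound the other three components. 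Summing the four resulting estimates gives $\| \wlow\|_{L^2(\Sigma)}\lesssim\epsilon\| w\|_{L^2(\Sigma)}$.

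I expect the one genuinely delicate point to be the error bookkeeping in item (ii) --- in particular the even/odd splitting of the arbitrary ambient test function $v$, which is precisely where the side-symmetry hypothesis enters and cannot be circumvented, together with keeping track of which errors are $O(\epsilon)$ versus $O(\tau^{8/9}_{\max})$ and invoking the smallness of $\tau_{\max}$ relative to $\epsilon$ at the right moment. Everything else amounts to assembling the already-established Lemmas \ref{Lw0uest1} and \ref{Lw0uest2} with the eigenvalue bound $\lambda_1(\Mbreve)\le\lambda_1(\Sigma)$.
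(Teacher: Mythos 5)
Your proof is correct and follows essentially the same route as the paper: item (i) combines the norm equivalences from Lemmas \ref{Lw0uest1} and \ref{Lw0uest2} with the eigenfunction identity and $\lambda_1(\Mbreve)\le 2$; item (ii) compares the two inner products region by region, controlling the graphical discrepancy via the smallness of $\ubreve^{\pm}$ and the catenoidal contributions via Lemma \ref{Lw0uest2}; item (iii) then tests (ii) against a constant and the coordinate functions and invokes Pythagoras. The one genuine difference is in (ii): you first decompose $v$ into $\Rcapunder_{\Spheq}$-even and -odd parts and discard the odd part, whereas the paper never splits $v$, instead proving the $U$-graphical estimate $|\langle w,v\rangle_{L^2(U)}-\langle u,v\rangle_{L^2(X_{\pm}(U))}|\lesssim\epsilon\|w\|_{L^2(\Sigma)}\|v\|_{C^1(N)}$ separately for each sign $\pm$ and then summing; both work, and yours is a small detour rather than an essential step. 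In light of this your closing remark overstates matters: the even/odd split of the test function is \emph{not} where the side-symmetry hypothesis enters and it \emph{can} be circumvented --- the symmetry is already used (in Definition \ref{dw0} and Lemma \ref{Lw0uest1}, via $\Xp^*u=\Xm^*u$ and $\ubreve^+=\ubreve^-$) before item (ii) is even stated.
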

\begin{proof}
We first prove (i).  On one hand, we have  $\|\nabla u\|^2_{L^2(\Mbreve)} = \lambda_1(\Mbreve) \|u\|^2_{L^2(\Mbreve)}$ because  $u$ is a first eigenfunction.  On the other hand, combining  \ref{Lw0uest1} and \ref{Lw0uest2} implies
\begin{equation*}
\begin{gathered}
\| u \|^2_{L^2(\Mbreve)} \Sim_{1+C\epsilon} 2\| w \|^2_{L^2(\Sigma)},
\\ 
\| d u \|^2_{L^2(\Mbreve)} \Sim_{1+C\epsilon} 2\| d w \|^2_{L^2(\Sigma)}.
\end{gathered}
\end{equation*}
From these facts (i) follows using that $\lambda_1(\Mbreve) \leq  \lambda_1(\Sigma)=2$.

By the triangle inequality, to prove (ii), it suffices to prove
\begin{equation*}
\begin{gathered}
|\langle w, v \rangle_{L^2(U)} 
	-  \langle u, v \rangle_{L^2(\Xpm(U))}|
	 \lesssim \epsilon \| w \|_{L^2(\Sigma)} \| v \|_{C^1(N)},
\\
|\langle w, v \rangle_{L^2(\Omega)}|  \lesssim \epsilon \| w\|_{L^2(\Sigma)} \| v \|_{C^1(N)},
\quad
\text{and}
\\
|\langle u, v \rangle_{L^2(\Pi^{-1}_\Sigma(\Omega))}|  \lesssim \epsilon  \| w \|_{L^2(\Sigma)} \| v \|_{C^1(N)}.
\end{gathered}
\end{equation*}
After adding and subtracting $\int_{U} w \Xpm^* v d\sigma$, the difference on the left hand side of the first estimate can be written as $(I) + (II)$, where 
 \begin{equation*}
\begin{gathered}
(I) = \int_{U} w ( v - \Xpm^* v) d\sigma, 
\quad
(II) = \int_{U} w  \Xpm^* v ( d\sigma - \Xpm^* d\sigma). 
\end{gathered}
\end{equation*}
The first estimate follows by estimating $(I)$ and $(II)$ as in the proof Lemma \ref{Lw0uest1}.  The second and third estimates follow from the Cauchy-Schwarz inequality and \ref{Lw0uest2}(i) and \ref{Lw0uest2}(iii). 

For (iii), suppose $u \in \Coord^\perp$.  By taking $v$ in (ii) to be a constant or coordinate function, it follows that 
\begin{align*}
| \langle w, v \rangle_{L^2(\Sigma)} | \lesssim \epsilon \| w \|_{L^2(\Sigma)} \| v \|_{L^2(\Sigma)}. 
\end{align*}
Item (iii) follows from this estimate and the Pythagorean theorem, upon expanding $\wlow$ into an $L^2(\Sigma)$-orthonormal basis for $\Ecal_{\lambda_0}(\Sigma)\oplus \Lapone(\Sigma) = \R \oplus \Coord(\Sigma)$.
\end{proof}

We are now ready to prove the main result of this subsection, which characterizes $\Ecal^+_{\lambda_1}$. 
\begin{prop}
\label{Pee}
\label{Ceme}
Either $\lambda_1(\Mbreve) < 2$ and 
$\Lapone^+(\Mbreve) = \{ 0\}$, or
$\lambda_1(\Mbreve) = 2$ and $\Lapone^+(\Mbreve)= \Coord^+(\Mbreve)$.
\end{prop}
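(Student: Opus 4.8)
The plan is to deduce everything from one \emph{core claim}: if $u \in \Lapone^+(\Mbreve)$ is $L^2(\Mbreve)$-orthogonal to $\Coord(\Mbreve)$, then $u = 0$. Since $\Coord(\Mbreve) \neq \{0\}$ and its elements are eigenfunctions with eigenvalue $2$, we always have $\lambda_1(\Mbreve) \leq 2$, so the two alternatives in the statement are exhaustive. Granting the core claim, I would argue as follows. If $\lambda_1(\Mbreve) < 2$, then $\Lapone(\Mbreve)$ is $L^2(\Mbreve)$-orthogonal to $\Ecal_2(\Mbreve) \supset \Coord(\Mbreve)$, so each $u \in \Lapone^+(\Mbreve)$ lies in $\Coord^\perp$ and hence vanishes, giving $\Lapone^+(\Mbreve) = \{0\}$. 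If $\lambda_1(\Mbreve) = 2$, then $\Coord^+(\Mbreve) \subset \Ecal_2^+(\Mbreve) = \Lapone^+(\Mbreve)$; writing $\Lapone^+(\Mbreve) = \Coord^+(\Mbreve) \oplus W$ with $W$ the $L^2(\Mbreve)$-orthogonal complement, any $u \in W$ is even, hence orthogonal to $\Coord^-(\Mbreve)$, and orthogonal to $\Coord^+(\Mbreve)$ by construction, so orthogonal to all of $\Coord(\Mbreve)$; the core claim then forces $W = \{0\}$ and $\Lapone^+(\Mbreve) = \Coord^+(\Mbreve)$.

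To prove the core claim I would argue by contradiction, using the machinery already assembled in this subsection. Suppose $u \in \Lapone^+(\Mbreve) \cap \Coord^\perp$ is nonzero and form the associated function $w = \wlow + \whigh$ on $\Sigma = \Spheq$ as in \ref{dw0}. Since $u \in \Coord^\perp$, Corollary \ref{Cray0}(iii) gives $\|\wlow\|_{L^2(\Sigma)} \lesssim \epsilon \|w\|_{L^2(\Sigma)}$, so $\|\whigh\|_{L^2(\Sigma)}^2 \geq (1 - C\epsilon^2) \|w\|_{L^2(\Sigma)}^2$. Because $\wlow$ is smooth and $\whigh$ is $L^2(\Sigma)$-orthogonal to the ($\Delta_\Sigma$-invariant) span of $\wlow$, integration by parts gives $\langle d\wlow, d\whigh\rangle_{L^2(\Sigma)} = 0$, so $\|dw\|_{L^2(\Sigma)}^2 = \|d\wlow\|_{L^2(\Sigma)}^2 + \|d\whigh\|_{L^2(\Sigma)}^2 \geq \lambda_2(\Sigma)\|\whigh\|_{L^2(\Sigma)}^2$, where $\lambda_2(\Spheq) = 6$. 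On the other hand, Corollary \ref{Cray0}(i) with $\lambda_1(\Spheq) = 2$ gives $\|dw\|_{L^2(\Sigma)}^2 \leq 2(1 + C\epsilon)\|w\|_{L^2(\Sigma)}^2$, while the proof of that corollary also yields $\|u\|_{L^2(\Mbreve)}^2 \Sim_{1+C\epsilon} 2\|w\|_{L^2(\Sigma)}^2$, so $\|w\|_{L^2(\Sigma)} \neq 0$ since $u \neq 0$. Chaining the inequalities,
\begin{align*}
6\,(1 - C\epsilon^2)\,\|w\|_{L^2(\Sigma)}^2 \;\leq\; \lambda_2(\Sigma)\,\|\whigh\|_{L^2(\Sigma)}^2 \;\leq\; \|dw\|_{L^2(\Sigma)}^2 \;\leq\; 2\,(1 + C\epsilon)\,\|w\|_{L^2(\Sigma)}^2,
\end{align*}
so dividing by $\|w\|_{L^2(\Sigma)}^2 \neq 0$ forces $6(1-C\epsilon^2) \leq 2(1+C\epsilon)$, which is impossible once $\epsilon$ is small in absolute terms (recall \ref{con:eps}). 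This contradiction proves the core claim, hence the proposition; combined with Proposition \ref{Leven} it also gives $\Lapone(\Mbreve) = \Coord(\Mbreve)$ and thus Theorem \ref{Tsph}.

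The substantive analytic work — that the Dirichlet energy of $u$ does not concentrate on the catenoidal bridges (Lemma \ref{Lw0uest2}), that the graphical part of $\Mbreve$ transfers $u$ faithfully to $\Sigma$ (Lemma \ref{Lw0uest1}), and the resulting Rayleigh-quotient bound (Corollary \ref{Cray0}) — is already in place, so the main obstacle here is merely the spectral-gap bookkeeping above, which exploits the generous gap $\lambda_1(\Spheq) = 2 < 6 = \lambda_2(\Spheq)$. The one point to watch is that every implied constant in the ``$\lesssim$'' and ``$\Sim_{1+C\epsilon}$'' estimates depends only on the background $(\Spheq, \Sph^3, g)$, so that fixing $\epsilon$ — and hence $\tau_{\max}$, via \cite[Convention 3.15(i)]{LDG} — small enough in those terms actually closes the contradiction.
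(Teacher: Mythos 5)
Your proof is correct and follows essentially the same route as the paper's: reduce to showing $\Lapone^+(\Mbreve)\cap\Coord^\perp=\{0\}$, use Corollary \ref{Cray0}(i) and (iii) to trap the Rayleigh quotient of $w$, and exploit the spectral gap $\lambda_1(\Spheq)=2<6=\lambda_2(\Spheq)$ to force $w=0$ once $\epsilon$ is small. The only differences are presentational — you spell out explicitly why it suffices to prove the core claim and why $w=0$ forces $u=0$ via $\|u\|^2_{L^2(\Mbreve)}\Sim_{1+C\epsilon}2\|w\|^2_{L^2(\Sigma)}$, both of which the paper leaves implicit.
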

\begin{proof}
It clearly suffices to prove that $\Ecal^+_{\lambda_1}\cap \Coord^\perp = \{0\}$.  To this end, fix $u \in \Lapone^+$.  Using the decomposition $w=\wlow + \whigh$ from \ref{dw0} implies
\begin{align*}
\| d w \|^2_{L^2(\Sigma)}
& \geq 
\| d \whigh\|^2_{L^2(\Sigma)}
\\
&\geq
\lambda_2(\Sigma) \| \whigh \|^2_{L^2(\Sigma)}
\\
&=
\lambda_2(\Sigma) ( \| w\|^2_{L^2(\Sigma)} - \| \wlow \|^2_{L^2(\Sigma)}).
\end{align*}
If additionally $u \in \Coord^\perp$, combining the preceding with \ref{Cray0}(i) and \ref{Cray0}(iii) shows that
\begin{align}
\label{Eray2}
\lambda_1(\Sigma) (1+C\epsilon) \|w\|^2_{L^2(\Sigma)} \geq
\| d w\|^2_{L^2(\Sigma)}
 \geq \lambda_2(\Sigma) (1-C\epsilon) \| w \|^2_{L^2(\Sigma)}.
\end{align}
Since $\lambda_1(\Sigma) = 2 < 6= \lambda_2(\Sigma)$, taking $\epsilon>0$ sufficiently small implies $w = 0$, hence $u = 0$. 
\end{proof}

\subsection*{Proof of Theorem \ref{Tsph}}
\phantom{ab}

Using the preceding analysis of $\Ecal^-_{\lambda_1}$ and $\Ecal^+_{\lambda_1}$, we are ready for the proof. 
\begin{proof}[Proof of Theorem \ref{Tsph}]
On one hand, the nontriviality of $\Ecal_{\lambda_1}$ implies at least one of the factors in the decomposition $\Ecal_{\lambda_1} = \Ecal^+_{\lambda_1} \oplus \Ecal^-_{\lambda_1}$ is nontrivial.

On the other hand, by \ref{Cem} and \ref{Ceme}, the nontriviality of either $\Ecal^+_{\lambda_1}$ or $\Ecal^-_{\lambda_1}$ implies the nontriviality of the other, and further that $\Ecal^{\pm}_{\lambda_1} = \Coord^\pm$.  
Thus $\Ecal_{\lambda_1} = \Ecal^+_{\lambda_1} \oplus \Ecal^-_{\lambda_1} = \Coord^+ \oplus \Coord^- = \Coord$. 
\end{proof}

\section{The first eigenspace for minimal doublings of the Clifford torus}
\label{SYauCliff}
\subsection*{Preliminaries} 
\phantom{ab}

Let $(\Sigma, N, g) = (\Sigma, \Sph^3, g)$ be the \emph{background} (recall the notation of \cite[Convention 2.1]{LDG}) consisting of a closed surface $\Sigma$  embedded in the round three-sphere $(N, g) = (\Sph^3, g)$,  let $\Mbreve$ be a doubling of $\Sigma$ as in Theorem \cite[Theorem 5.7]{LDG}.  We keep to Convention  \ref{con:eps} regarding $\epsilon>0$.

We do not yet assume the base minimal surface $\Sigma$ is the Clifford torus $\T$, but do assume a strong form of Yau's conjecture holds on $\Sigma$, made precise as follows. 

\begin{assumption}
\label{Ayaubase}
We assume the first eigenspace $\Ecal_{\lambda_1}(\Sigma)$ on the base $\Sigma$ is the span $\Ccal(\Sigma)$ of the coordinate functions on $\Sigma$.
\end{assumption}

In this general case, the two sides of $\Sigma$ in $\Sph^3$ are no longer necessarily symmetric, so there need not exist a reflectional isometry $\rho : \Mbreve \rightarrow \Mbreve$, as was the case when $\Sigma$ was totally geodesic.  Our first step is to define a  surface $(M, g')$ whose eigenvalues are close to those of $(\Mbreve, g)$ which has an exact reflectional isometry exchanging the top and bottom halves of $M$.

\begin{definition}
\label{DM}
Let $M[ \varphi, \kappaunderbold]$ be the initial surface (recall \cite[Def. 3.17]{LDG}) corresponding to $\Mbreve$ as an \cite[Theorem 5.7]{LDG}, with corresponding singular set $L$ and elevation and tilt parameters $\kappaunderbold$.  Using \cite[Def. 3.17]{LDG} we define  $M : = M[\varphi, \zerobold]$. 
\end{definition}

Since the elevation and tilt parameters $\kappaunderbold$ corresponding to the initial surface $M=M[\varphi, \zerobold]$ vanish, it follows that $M$ is invariant under the coordinate reflection $\zz \mapsto - \zz$, where $\zz$ is the signed distance function from $\Sigma$ in a system of Fermi coordinates about $\Sigma$ (see \cite[Appendix A]{LDG}).  Denote by $\rho: M \rightarrow M$ the restriction of this involution to $M$.  On $M$, define a metric $g'$ by
\begin{align}
\label{Egp}
g' : = \frac{1}{2}( g + \rho^* g),
\end{align}
where $g$ is the metric induced by the ambient metric on $\Sph^3$. 

Note that $\rho$ is an involutive isometry of $(M, g')$ which fixes the circles $\{\zz = 0\}$, which are the waists of the catenoidal bridges.

\begin{lemma}
\label{Lemmbreve}
There is a diffeomorphism $f: M \rightarrow \Mbreve$ with the property that for any $u \in W^{1,2}(\Mbreve)$, 
\begin{align*}
\| u \|_{L^2(\Mbreve, g) } &\Sim_{1+ C\epsilon} \| f^* u \|_{L^2(M, g')},\\
\| d u \|_{L^2(\Mbreve, g) } &\Sim_{1+C\epsilon} \| d f^* u \|_{L^2(M, g')}. 
\end{align*}
\end{lemma}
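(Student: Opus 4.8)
## Proof proposal for Lemma \ref{Lemmbreve}

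\textbf{Overview.} The plan is to build the diffeomorphism $f$ as a composition: first identify $\Mbreve$ with the initial surface $M = M[\varphi, \zerobold]$ via the perturbation parametrization $X^{N,g}_{M, \upphihat}$ from \cite[Theorem 5.7]{LDG}, then compare the metric $g$ pulled back to $M$ with the symmetrized metric $g'$. The key point is that \emph{both} comparisons are controlled by small quantities supplied by \cite{LDG}: the perturbation function $\upphihat$ is small (uniformly $C^{2,\beta}$-small in terms of $\tau_{\max}$, hence in terms of $\epsilon$ by \ref{con:eps}), and the difference $g - \rho^* g$ is small because the two sides of $\Sigma$ in $\Sph^3$ differ only through the second fundamental form of $\Sigma$ and the ambient curvature, both of which contribute terms that are small on the scale of the catenoidal bridges (this is where \cite[Lemma 2.28]{LDG} or its analogue enters, exactly as in the proof of Lemma \ref{Lw0uest2}).

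\textbf{Steps.} First I would take $f := X^{N,g}_{M, \upphihat} : M \to \Mbreve$, which is a diffeomorphism by construction in \cite[Theorem 5.7]{LDG}. Pulling back, for $u \in W^{1,2}(\Mbreve)$ one has $\| u\|_{L^2(\Mbreve, g)} = \| f^* u\|_{L^2(M, f^* g)}$ and likewise for the gradient, with the gradient norm on the right computed using the metric $f^* g$. So the entire content reduces to the metric comparison
\begin{align}
\label{Emetcomp}
f^* g \Sim_{1 + C\epsilon} g' \quad \text{on } M,
\end{align}
in the sense that the two metrics are $(1+C\epsilon)$-quasi-isometric as quadratic forms on $TM$ (which immediately yields the stated comparison of both $L^2$ norms, since a $(1+C\epsilon)$-bilipschitz change of metric distorts $L^2$ norms of functions and of $1$-forms by a factor $\Sim_{1+C\epsilon}$). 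Second, I would estimate $f^* g - g$: since $\upphihat$ is $C^1$-small (of size $O(\tau_{\max}^{8/9})$ or better, controlled by $\epsilon$ via \cite[Convention 3.15(i)]{LDG} and \ref{con:eps}), the graph map $X^{N,g}_{M, \upphihat}$ is $C^1$-close to the identity on $M$, so $f^* g \Sim_{1+C\epsilon} g$. Third, I would estimate $g - g'= \frac12(g - \rho^* g)$: on the graphical part of $M$ (away from the catenoidal necks, where $M$ is a small normal graph over $\Sigma$ on each side) the induced metrics on the two sides differ by terms of order $\mathrm{dist}(\cdot, \Sigma)\cdot(|A_\Sigma| + \text{curvature})$, which is bounded by the size of the bridges, hence by $\epsilon$; on the catenoidal necks, which are by construction symmetric under $\zz \mapsto -\zz$ up to the small perturbation and the deviation of $g$ from the Fermi-coordinate Euclidean metric $\gpeuc$ (again controlled by \cite[Lemma 2.28]{LDG}), one has $\rho^* g \Sim_{1+C\epsilon} g$ as well. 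Combining, $g' \Sim_{1+C\epsilon} g \Sim_{1+C\epsilon} f^* g$, which is \eqref{Emetcomp}.

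\textbf{Main obstacle.} The routine parts are the graph-map estimate and the bookkeeping of quasi-isometry constants; the delicate step is controlling $g - \rho^*g$ \emph{uniformly down to the waists of the catenoidal bridges}, where the surface is most curved and where naive estimates on $g - \rho^* g$ in terms of $\mathrm{dist}(\cdot,\Sigma)$ are not obviously small relative to the (large) gradient of functions concentrated there. This is handled exactly as in Lemma \ref{Lw0uest2}: rescale each bridge $\cat[p,\tau_p,\kappaunder_p]$ to unit size, on which scale $g$, $\rho^* g$ and $\gpeuc$ are all mutually $(1+C\epsilon)$-close by \cite[Lemma 2.28]{LDG} and the smallness of $\upphihat$; the factor $\epsilon$ then comes from $\tau_{\max}$ being as small as needed. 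I would also note that $\rho$ as defined on $M$ via the Fermi coordinate $\zz \mapsto -\zz$ is an \emph{exact} isometry of $(M, g')$ by \eqref{Egp}, a point that is recorded just after \eqref{Egp} and is what makes the symmetrization worthwhile, but it is not strictly needed for this lemma's statement. Finally, since the claimed estimates are stated only for $u \in W^{1,2}(\Mbreve)$ and both sides are continuous in $u$ with respect to the $W^{1,2}$ norm, it suffices to verify them for smooth $u$, after which density finishes the argument.
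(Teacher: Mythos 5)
Your overall strategy matches the paper's: reduce to a $C^0$ comparison between $g'$ and the pulled-back metric $f^* g$, and control each piece (the graph map, the side-symmetrization, and the Fermi/Euclidean comparison near the bridges) by $\epsilon$. However, there is a genuine gap in the very first step, where you define $f$.

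You take $f := X^{N,g}_{M,\upphihat} : M \to \Mbreve$, but $\Mbreve$ is the normal graph of $\upphihat$ over the initial surface $M[\varphi,\kappaunderbold]$ with the actual (generically nonzero) elevation-and-tilt parameters $\kappaunderbold$, not over $M = M[\varphi,\zerobold]$ as in Definition \ref{DM}. In general $\kappaunderbold \neq \zerobold$, so $M$ and $M[\varphi,\kappaunderbold]$ are two \emph{different} initial surfaces, and the graph parametrization alone does not map $M$ to $\Mbreve$. The whole reason the paper introduces $M = M[\varphi,\zerobold]$ is precisely that it, unlike $M[\varphi,\kappaunderbold]$, has the exact $\zz \mapsto -\zz$ symmetry $\rho$; the price is that one must also bridge the gap between the two initial surfaces. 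The paper does this by interposing the diffeomorphism $\Fcal_{\zetaboldunder} : M[\varphi,\zerobold] \to M[\varphi,\kappaunderbold]$ from \cite[Lemma 5.5]{LDG}, setting $f = X \circ \Fcal_{\zetaboldunder}$, and proving an additional estimate $\| g - \Fcal_{\zetaboldunder}^* g : C^0 \| < C\epsilon$ (by arguing as in \cite[Lemma 5.5]{LDG} and \cite[Lemma 4.3]{LDG}). Your proposal omits both the composition with $\Fcal_{\zetaboldunder}$ and the corresponding metric estimate; without them the chain $g' \Sim_{1+C\epsilon} g \Sim_{1+C\epsilon} f^*g$ does not close, because the middle comparison has to pass through the initial surface that $\Mbreve$ is actually graphed over. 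The remaining two estimates in your sketch (smallness of $g - X^*g$ via smallness of $\upphihat$, and smallness of $g - g'$ via the Fermi-coordinate comparison with $\gcir$, with rescaling near the waists) do agree with the paper's argument, and the reduction to a $C^0$ metric bound via the bilipschitz distortion of $L^2$ norms is indeed what \cite[Lemma C.12]{LDG} provides.
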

\begin{proof}
Recall \cite[Theorem 5.7]{LDG} that the minimal surface $\Mbreve$ is the normal graph over the initial surface $M[\varphi, \kappaunderbold]$ of a smooth function $\upphihat$, and let  $X: M[\varphi, \kappaunder] \rightarrow \Mbreve$ denote corresponding graph parametrization.  Recall also the diffeomorphism $\Fcal_{\zetaboldunder} : M[\varphi, \zerobold] \rightarrow M[\varphi, \kappaunderbold]$ between the initial surfaces $M[\varphi, \zerobold]$ and $M[\varphi, \kappaunderbold]$ defined in \cite[Lemma 5.5]{LDG}, and define $f = X \circ \Fcal_{\zetaboldunder}$. 

Using \cite[Lemma C.12]{LDG}, in order to prove the lemma, it suffices to prove that 
\begin{align}
\label{Egpp}
\| g' - f^*g : C^0( M[\varphi, \zerobold], g) \| < C\epsilon
\end{align}
where $C> 0$ depends only on the background. 

Using the the smallness of $\upphihat$ in terms of $\tau_{\max}$ from \cite[Theorem 5.7]{LDG}, it follows that 
\[ \| g  - X^*g : C^0(M[ \varphi, \kappaunderbold], g)\|< \epsilon. \]
Similarly, arguing as in the proof of Lemma \cite[Lemma 5.5]{LDG} and \cite[Lemma 4.3]{LDG} establishes an analogous estimate on $g - \Fcal_{\zetaboldunder}^* g$ on $M[\varphi, \zerobold]$.
 
Finally, let $g'$ be defined as in \eqref{Egp}.  Letting $\gcir$ be the Euclidean metric on the system of Fermi coordinates (recall \cite[Def. 2.2]{LDG}) and using the smallness of $g-\gcir$ from \cite[Lemma 2.22]{LDG}, it follows that
\begin{align*}
\| g' - g : C^0(M[\varphi, 0], g)\| < \epsilon.
\end{align*}
Finally, combining the preceding estimates proves \eqref{Egpp} and proves the lemma.
\end{proof}

Because $(M, g')$ is a perturbation of the minimal surface $(\Mbreve, g)$, the coordinate functions $\Coord(M)$ are not necessarily first eigenfunctions on $(M, g')$.  To handle this, we instead consider a nearby space $\Coord'(M)$ spanned by eigenfunctions with eigenvalues near $2$.  
\begin{lemma}
\label{Lproj} 
There exists a subspace $\Coord'(M) \subset L^2(M)$ with the following properties.
\begin{enumerate}[label=\emph{(\roman*)}]
\item $\Coord'$ is four-dimensional and a direct sum of subspaces of eigenfunctions. 
\item $\| v - P v \|_{L^2(M)} \leq \epsilon \| v \|_{L^2(M)}$ for each $v \in \Coord$, where $P : \Coord(M) \rightarrow \Coord'(M)$ is the orthogonal projection.
\end{enumerate}
\end{lemma}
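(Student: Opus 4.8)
The plan is to obtain $\Coord'(M)$ by projecting the coordinate functions onto the span of the eigenspaces of $(M,g')$ with eigenvalue close to $2$; the point is that, although the coordinate functions need not be eigenfunctions on $(M,g')$, they are \emph{approximate} eigenfunctions with eigenvalue $2$, and a soft spectral-perturbation argument then finishes the job. Throughout, recall from Convention \ref{con:eps} that $\epsilon$ may be taken as small as needed by shrinking the admissible $\tau_{\max}$, and note that $\Coord(M)$ is four-dimensional, the coordinate functions being linearly independent on $M$ since $M$ is not contained in any hyperplane of $\R^4$. The main obstacle, identified below, will be the approximate-eigenfunction estimate near the catenoidal bridges.

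\emph{Step 1: the coordinate functions are approximate eigenfunctions.} First I would prove
\begin{align}
\label{ECoordpapprox}
\| \Delta_{g'} v + 2 v \|_{L^2(M, g')} \;\lesssim\; \epsilon\, \| v \|_{L^2(M, g')}
\qquad\text{for all } v \in \Coord(M).
\end{align}
Since $\Coord(M)$ is a fixed finite-dimensional space on which the $C^2(M,g)$ and $L^2(M,g)$ norms are mutually comparable, and $\|\cdot\|_{L^2(M,g')}\Sim_{1+C\epsilon}\|\cdot\|_{L^2(M,g)}$ by the $C^0$-closeness of $g'$ and $g$ established in the proof of Lemma \ref{Lemmbreve}, it suffices to bound $\|\Delta_{g'}(x_i|_M)+2\,x_i|_M\|_{L^2(M,g)}$ for each coordinate function $x_i$. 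Writing $\Delta_{g'}=\Delta_g+(\Delta_{g'}-\Delta_g)$ and using the standard relation
\begin{align*}
\Delta_g(x_i|_M) = -2\,x_i|_M - H_M\,\partial_\nu x_i
\end{align*}
between the intrinsic Laplacian of $M\subset\Sph^3$ and the ambient one, where $H_M$ is the scalar mean curvature of $M$, the claim reduces to the smallness of $\|H_M\|_{L^2(M,g)}$ and of $\|(\Delta_{g'}-\Delta_g)(x_i|_M)\|_{L^2(M,g)}$. The former is small since $M$ is an initial surface in the sense of \cite[Def.~3.17]{LDG}, with correspondingly small mean curvature. For the latter, $g'-g=\tfrac12(\rho^*g-g)$ is $C^0$-small on all of $M$ by Lemma \ref{Lemmbreve}'s proof; it fails to be $C^1$-small only on the waist regions of the catenoidal bridges, but those have total $g$-area $O(\tau_{\max}^2)$, so their contribution to the $L^2$ norm of $(\Delta_{g'}-\Delta_g)(x_i|_M)$ is still small, while elsewhere $g'-g$ is $C^1$-small and the pointwise bound applies. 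This proves \eqref{ECoordpapprox}.

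\emph{Step 2: spectral projection.} Fix a small constant $c_0>0$, and for each eigenvalue $\mu$ of $-\Delta_{g'}$ let $\Pi_\mu$ denote the $L^2(M,g')$-orthogonal projection onto the $\mu$-eigenspace $\Ecal_\mu(M,g')$. Set $\Pi:=\sum_{|\mu-2|\le c_0}\Pi_\mu$, a finite sum since the spectrum is discrete, and define
\begin{align*}
\Coord'(M):=\Pi\big(\Coord(M)\big),
\end{align*}
which is automatically a direct sum of subspaces of eigenspaces of $-\Delta_{g'}$, all with eigenvalue in $[2-c_0,2+c_0]$. For $v\in\Coord(M)$, expanding $v$ in an $L^2(M,g')$-eigenbasis gives
\begin{align*}
\| v-\Pi v\|^2_{L^2(M,g')}
&=\sum_{|\mu-2|>c_0}\|\Pi_\mu v\|^2
\le \frac1{c_0^2}\sum_{\mu}(\mu-2)^2\|\Pi_\mu v\|^2 \\
&=\frac1{c_0^2}\,\|\Delta_{g'}v+2v\|^2_{L^2(M,g')}
\lesssim \frac{\epsilon^2}{c_0^2}\,\|v\|^2_{L^2(M,g')},
\end{align*}
using \eqref{ECoordpapprox} in the last step; as $c_0$ is fixed and $\epsilon$ may be taken as small as needed, this is $\le\epsilon\|v\|$ after replacing $\epsilon$ by a fixed multiple of itself. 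Since $\Pi v\in\Coord'(M)$ while $v-\Pi v$ is orthogonal to the whole spectral subspace $\bigoplus_{|\mu-2|\le c_0}\Ecal_\mu(M,g')$, hence to $\Coord'(M)$, the vector $\Pi v$ is precisely the orthogonal projection $Pv$ of $v$ onto $\Coord'(M)$; this proves (ii). Finally $\|v-Pv\|\le\epsilon\|v\|<\|v\|$ forces $P$ to be injective on $\Coord(M)$, so $\dim\Coord'(M)=\dim\Coord(M)=4$, proving (i).

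\emph{Where the difficulty lies.} The crux is the approximate-eigenfunction estimate \eqref{ECoordpapprox}, and within it the catenoidal regions: there $g'$ genuinely fails to be $C^1$-close to $g$, so $\Delta_{g'}-\Delta_g$ cannot be controlled pointwise, and one must instead exploit the $O(\tau_{\max}^2)$ smallness of the area of the offending region to keep its contribution to the $L^2$ norm of the defect under control. The spectral-projection step is entirely soft.
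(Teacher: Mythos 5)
Your proof takes a genuinely different route from the paper's.  The paper's proof transfers the coordinate functions from $(\Mbreve, g)$ --- where they are \emph{exact} eigenfunctions with eigenvalue $2$ --- to $(M, g')$ via the diffeomorphism of Lemma~\ref{Lemmbreve}, and then invokes the cited perturbation result for eigenvalues and eigenspaces.  This route needs only the \emph{first-order} Rayleigh-quotient comparisons supplied by Lemma~\ref{Lemmbreve} (equivalence of $L^2$ norms and Dirichlet energies up to $1+C\epsilon$).  You instead work intrinsically on $(M,g')$, estimating the defect $\|\Delta_{g'}v + 2v\|_{L^2}$ for $v\in\Coord(M)$, and then run a soft spectral-projection argument.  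This is an appealing, citation-free alternative, but it trades the paper's first-order comparison for a \emph{second-order} one: you must control $(\Delta_{g'}-\Delta_g)v$ pointwise, which is precisely what the paper's formulation avoids.  Two concrete issues:

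\emph{(1) The near-bridge estimate in Step 1 is the crux, and the sketch does not establish it.}  Your appeal to the $O(\tau_{\max}^2)$ area of the waist region is not by itself decisive, because the \emph{inverse}-metric difference $g'^{ij}-g^{ij}$ degenerates there even though $g'_{ij}-g_{ij}$ is $O(\tau)$ in $C^0$: e.g.\ $g_{\theta\theta}\sim\tau^2$ forces $g'^{\theta\theta}-g^{\theta\theta}$ to be of size $\tau^{-1}$ up to logarithms.  So the coefficients of $\Delta_{g'}-\Delta_g$ blow up near the waist at a rate competing with the shrinking area, and the resulting $L^2$ bound requires a genuine scale-by-scale bookkeeping (in the spirit of the scaled-metric estimates elsewhere in \cite{LDG}) rather than a one-line appeal to small area.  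You correctly flag this as the difficulty, but as written it is a gap.

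\emph{(2) The assertion that $\Coord' := \Pi(\Coord(M))$ is ``automatically a direct sum of subspaces of eigenfunctions'' is incorrect.}  $\Pi(\Coord)$ is a subspace of $\bigoplus_{|\mu-2|\le c_0}\Ecal_\mu(M,g')$, but a subspace of a direct sum need not itself be a direct sum of subspaces of the summands --- consider the diagonal in $\R\oplus\R$.  So $\Coord'$ as you define it need not satisfy item (i).  The natural repair is to take $\Coord' := \sum_{|\mu-2|\le c_0}\Pi_\mu(\Coord(M))$, which \emph{is} a direct sum of subspaces of eigenspaces; the injectivity of $\Pi$ on $\Coord$ then gives $\dim\Coord'\ge 4$, but the equality $\dim\Coord'=4$ in (i) now requires a separate argument that each $\Pi_\mu(\Coord)$ does not inflate the dimension, which your proposal does not supply.
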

\begin{proof}
Note that $\Coord(\Mbreve) \subset \Ecal_{\lambda_1}(\Mbreve, g)$ and that $\Coord(\Mbreve)$ is four-dimensional.  The conclusion now follows from the closeness of $(\Mbreve, g)$ to $(M, g')$ from Lemma \ref{Lemmbreve}, and the continuous dependence of eigenvalues on $C^2$ perturbations of the metric \cite[Lemma 2.2]{FS1}.
\end{proof}

When the role of $(M,g')$ is clear, we write $\Ecal_{\lambda_i}$ in place of $\Ecal_{\lambda_i}(M, g')$.  Just as in Section \ref{Ssph}, the involutive isometry $\rho$ of $(M, g')$ induces even-odd decompositions on $\rho$-invariant subspaces of $L^2(M, g')$.  In the next subsections, we study even and odd eigenfunctions whose eigenvalues are close to $2$; for this reason it is convenient to make the following definition. 
\begin{definition}
\label{deappr}
Let $\Eappr : = \bigoplus_{i} \Ecal_{\lambda_i}$, where the sum is over the $i$ such that $\lambda_i \in (1/4, 2+\epsilon)$.
\end{definition}

\subsection*{The even part $\Eappr^+$}
\phantom{ab}

We will study $\Eappr^+$ in essentially the same way we studied $\Ecal^+_{\lambda_1}(\Mbreve)$ from Section \ref{Ssph}, when $\Mbreve$ was a symmetric $\Spheq$-doubling.   For clarity and completeness, we repeat the corresponding discussion from \ref{Ssph}, making modifications as appropriate to accommodate the more general setting.

\begin{definition}
\label{dOmegap1}
For each $p \in L$, define $\Omegap = D^\Sigma_p(\tau^{2\alpha}_p)$, $\Omega = \cup_{p \in L} \Omega_p$, and and $U = \Sigma \setminus \Omega$. 
\end{definition}

Recall from \cite[Def. 3.17]{LDG} that $\Pi^{-1}_\Sigma(\Omega)$ is the union of the graphs of functions $\varphi^{gl}_+$ and $-\varphi^{gl}_-$, where the functions $\varphi^{gl}_{\pm}$ are defined on $U$ and satisfy (recall \cite[Lemma 3.19]{LDG})
\begin{align}
\label{Eubreve2}
\| \varphi^{gl}_{\pm} : C^{2, \beta}(U) \| \lesssim \tau^{8/9}_{\max}.
\end{align}
Note the vanishing of the elevation and tilt parameters in the definition of $M$ implies moreover that $\varphi^{gl}_{+} = \varphi^{gl}_-$. 
We denote by $X_{\pm}: U \rightarrow \Mbreve$ the maps parametrizing the graphs of $\varphi^{gl}_{+}$ and $-\varphi^{gl}_-= - \varphi^{gl}_+$.

\begin{definition}[The approximate eigenfunction $w$]
\label{dw}
Given $u \in \Eappr^+$, 
define a piecewise smooth function $w$ on $\Sigma$ by requesting that
\begin{align}
\label{Ewdef}
w = \Xp^* u = \Xm^* u
\quad
\text{on} 
\quad
\Omega
\quad
\text{and}
\quad
\Delta w = 0
\quad
\text{on} 
\quad
\Sigma \setminus \Omega
\end{align}
and a decomposition $w = \wlow + \whigh$, where $\wlow$ is the projection of $w$ onto $\Ecal_{\lambda_0}(\Sigma)\oplus \Ecal_{\lambda_1}(\Sigma)$.
 \end{definition}
 
\noindent Note that $w$ is well-defined in \ref{dw} because the evenness of $u$ and the symmetry imply $\Xp^* u = \Xm^* u$. 

\begin{convention}
Unless otherwise stated, we adopt the convention that $L^2$ norms computed over domains of $M$ will be computed with respect to the metric $g'$. 
\end{convention}

The next two results are the analogs of \ref{Lw0uest1} and \ref{Lw0uest2} in the present context, and the proofs are omitted because they are essentially the same. 

\begin{lemma}
\label{Lwuest1}
For $u$ and $w$ as in \ref{dw}, the following hold. 
\begin{enumerate}[label=\emph{(\roman*)}]
	\item $\| w \|_{L^2( U)} \Sim_{1+C\epsilon} \| u\|_{L^2( \Xpm(U))}$. 
	\item $\| d w \|_{L^2( U)} \Sim_{1+C\epsilon} \| d u\|_{L^2(\Xpm(U))}$.
	\end{enumerate}
\end{lemma}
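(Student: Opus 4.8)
The plan is to follow the template of the proof of Lemma \ref{Lw0uest1} in the totally-geodesic case; the only new feature is that $L^2$ norms over subsets of $M$ are measured with respect to $g'$ rather than the induced metric $g$, and this discrepancy will be absorbed using the $C^0$-closeness of $g'$ to $g$ recorded in the proof of Lemma \ref{Lemmbreve}.

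For (i), I would begin by observing that $w = \Xpm^* u$ on $U$ by \eqref{Ewdef}, so that $w$ and $u|_{\Xpm(U)}$ correspond under the diffeomorphism $\Xpm : U \to \Xpm(U)$, whence
\begin{align*}
\| w \|^2_{L^2(U)} = \| u \|^2_{L^2(\Xpm(U))} + \int_U w^2\, \bigl( d\sigma - \Xpm^* d\sigma' \bigr),
\end{align*}
where $d\sigma$ is the area form of $(U, g)$ and $d\sigma'$ that of the graph $\Xpm(U) \subset (M, g')$. Since $\Xpm$ is the graph map of $\varphi^{gl}_{\pm}$ over $U$ in a Fermi coordinate system about $\Sigma$, the induced metric on the graph, pulled back by $\Xpm$, differs from $g|_U$ by a term controlled by $\| \varphi^{gl}_{\pm} : C^1(U)\|$ together with $\| g' - g : C^0(M, g)\|$; by \eqref{Eubreve2} and Lemma \ref{Lemmbreve} both are $\lesssim \tau_{\max}^{8/9} + \epsilon \lesssim \epsilon$ once $\tau_{\max}$ is taken small. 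Hence $|d\sigma - \Xpm^* d\sigma'| \lesssim \epsilon\, d\sigma$ pointwise on $U$, so the error integral is $\lesssim \epsilon \| w \|^2_{L^2(U)}$, and (i) follows.

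For (ii) I would use $dw = \Xpm^*(du)$ and write
\begin{align*}
\| dw \|^2_{L^2(U, g)} = \int_U |\Xpm^* du|^2_{g}\, d\sigma,
\qquad
\| du \|^2_{L^2(\Xpm(U))} = \int_U \Xpm^*\!\bigl( |du|^2_{g'} \bigr)\, \Xpm^* d\sigma',
\end{align*}
then compare the two integrands. The difference between the pointwise cometric norms $|\Xpm^* du|^2_{g}$ and $\Xpm^*(|du|^2_{g'})$ is controlled by the difference between the cometric of $g|_U$ and that of the pulled-back induced metric on the graph, which is $\lesssim \epsilon$ by the same estimate used in (i) (now one also uses the $C^1$ bound on $d\varphi^{gl}_{\pm}$), while the area forms were already compared in (i). Hence the two integrands agree up to a factor $1 + C\epsilon$, which proves (ii); uniformity over $U$ is automatic since the bounds on $\varphi^{gl}_{\pm}$ and on $g' - g$ hold uniformly.

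I do not anticipate a genuine obstacle: this is a routine normal-graph perturbation estimate, essentially identical to the totally-geodesic case of Lemma \ref{Lw0uest1}. The only point requiring care is that the comparison metric on the two sheets of $M$ is $g'$ rather than the induced metric $g$, so the proof must fold the $O(\epsilon)$ bound $\| g' - g : C^0(M,g)\| < \epsilon$ from the proof of Lemma \ref{Lemmbreve} into the error alongside the graphical smallness \eqref{Eubreve2}.
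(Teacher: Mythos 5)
Your proposal is correct and takes the route the authors intended: the paper omits the proof on the grounds that it is ``essentially the same'' as that of Lemma \ref{Lw0uest1}, and the one genuinely new ingredient you isolate — replacing the induced metric $g$ on the graphical region by $g'$ and absorbing the $C^0$ discrepancy $\| g' - g \| \lesssim \epsilon$ from the proof of Lemma \ref{Lemmbreve} into the $O(\epsilon)$ error alongside the graphical smallness \eqref{Eubreve2} — is precisely the adaptation required. No gaps; the normal-graph comparison of area forms and cometrics goes through uniformly over $U$ exactly as you describe.
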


\begin{lemma}
\label{Lwuest2}
For $u$ and $w$ as in \ref{dw}, the following hold. 
\begin{enumerate}[label=\emph{(\roman*)}]
	\item $\| u \|_{L^2(\Pi^{-1}_\Sigma(\Omega))} \lesssim \epsilon \| u \|_{L^2(M)}$. 
	\item $\| d u \|_{L^2(\Pi^{-1}_\Sigma( \Omega))} \lesssim \epsilon \| d u \|_{L^2(M)}$. 
	\item $\| w \|_{L^2(\Omega)} \lesssim \epsilon \| w \|_{L^2(\Sigma)}$. 
	\item $\| d w  \|_{L^2(\Omega)} 
	 \lesssim \epsilon \| d w \|_{L^2(\Sigma)}$. 
	\end{enumerate}
\end{lemma}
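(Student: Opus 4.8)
The plan is to prove items (i)--(iv) by following, essentially verbatim, the proof of \ref{Lw0uest2}; the only structural differences are that we work with $(M, g')$ in place of $(\Mbreve, g)$ and with $u \in \Eappr^+$, whose eigenvalue $\lambda_i$ is merely bounded above by $2 + \epsilon$ rather than equal to $\lambda_1 \leq 2$.

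First I would fix $p \in L$, set $\runder := \tau_p^{2\alpha}$ and $\rtop := \tau_p^\alpha$ so that $\rtop/\runder = \tau_p^{-\alpha}$ is large, pull $u$ back by the diffeomorphism $f$ of Lemma \ref{Lemmbreve} to a function $\ucheck$ on the model catenoidal bridge $\cat$ associated with $p$, and --- using the $C^0$-closeness of $g'$ to the flat Fermi metric $\gcir$ supplied by Lemma \ref{Lemmbreve} together with the closeness estimates from \cite{LDG} cited there --- replace all relevant $L^2(M,g')$-norms of $u$ and $du$ over $\Pi^{-1}_\Sigma(\Omegap)$, up to factors $\Sim_{1+C\epsilon}$, by norms with respect to the standard catenoid metric over the subdomains $\cat(\runder) \subset \cat(\rtop)$. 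Next I would replace $\ucheck$ by the $\gcir$-harmonic function $\ucir$ on $\cat(\rtop)$ with the same boundary values: since $\Delta_{g'} u = \lambda_i u$, the difference $\ucheck - \ucir$ solves a Dirichlet problem with zero boundary data whose inhomogeneous term is a sum of a metric-commutator term and $\lambda_i \ucheck$, both of size $\lesssim \|\ucheck\|$ (the bounded eigenvalue being all that is needed here), so rescaling to unit size and applying standard elliptic theory gives $\|\ucheck\|_{L^2(\cat(r))} \Sim_{1+C\epsilon} \|\ucir\|_{L^2(\cat(r))}$, and likewise for gradients, uniformly for $r \in [\runder, \rtop]$.

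The remaining step is the Fourier analysis on the catenoid. Since $\rho^* u = u$ forces $\rho^* \ucir = \ucir$, the function $\ucir$ lies in the span of the $\rho$-invariant harmonic modes $v_k(\sss, \theta) = \cosh(k\sss) e^{ik\theta}$, $k \in \Z$, the odd modes $\sinh(k\sss)e^{ik\theta}$ being excluded by evenness. Using the largeness of $\rtop / \runder = \tau_p^{-\alpha}$, the exponential growth in $|\sss|$ of $\cosh(k\sss)$ for $k \neq 0$ and of the catenoid area element $\cosh^2 \sss$ for $k = 0$ yields $\|v_k\|_{L^2(\cat(\runder))} \lesssim \epsilon\, \|v_k\|_{L^2(\cat(\rtop))}$ and the analogous gradient bound, uniformly in $k$; by the Pythagorean theorem these pass to $\ucir$, and chaining the three comparisons back to $u$ and summing over the pairwise disjoint disks $D^\Sigma_p(\tau_p^\alpha)$ proves (i) and (ii). For (iii) and (iv) I would argue identically with $w$, which is harmonic on $\Omega$ (recall \ref{dw}): compare $w$ on $\Omegap = D^\Sigma_p(\tau_p^{2\alpha})$ to the $\gcir$-harmonic function on $D^\Sigma_p(\tau_p^\alpha)$ with the same boundary data, expand in the planar harmonic modes $r^{|k|} e^{ik\theta}$ whose $L^2$-mass on the smaller concentric disk is an $\epsilon$-small (a factor $\lesssim \tau_p^{2\alpha}$) fraction of the mass on $D^\Sigma_p(\tau_p^\alpha)$, sum over $p$, and invoke \ref{Lwuest1} to transfer between $\Sigma$ and the graphs.

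The main point requiring care --- exactly as in \ref{Lw0uest2} --- is the uniformity in $k$ of the mode estimates on the catenoid, and in particular the correct treatment of the $k = 0$ mode, where the concentration of $L^2$-mass away from the waist comes from the growth of the catenoid area element rather than from the mode itself, so that one must work with the genuine model catenoid metric rather than a naive truncation.
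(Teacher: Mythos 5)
Your proposal is correct and follows exactly the approach the paper intends: the paper explicitly omits this proof on the grounds that it is ``essentially the same'' as that of Lemma \ref{Lw0uest2}, and you correctly identify the only substantive changes needed — replacing $(\Mbreve, g)$ by $(M, g')$ via the diffeomorphism and metric comparison of Lemma \ref{Lemmbreve}, and using that the eigenvalue $\lambda_i$ of $u \in \Eappr^+$ is merely bounded by $2+\epsilon$ rather than equal to $\lambda_1 \leq 2$, which is all the elliptic comparison with the $\gcir$-harmonic function requires. The Fourier-mode concentration argument on the catenoid and the planar-disk argument for items (iii)--(iv) carry over unchanged, as you note.
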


\begin{cor}
\label{Cray1}
For $u$ and $w$ as in \ref{dw}, the following hold.
\begin{enumerate}[label=\emph{(\roman*)}]
\item $\| \nabla w \|^2_{L^2(\Sigma)}
\leq 
(1+ C\epsilon) \lambda_1(\Sigma) \| w \|^2_{L^2(\Sigma)}.$
\item $|2 \langle w , v \rangle_{L^2(\Sigma)} 
	-  \langle u, Pv \rangle_{L^2(M)}|
	 \lesssim \epsilon \| w \|_{L^2(\Sigma)}  \| v \|_{C^1(N)}$ for any  $v \in \Coord(M)$.
\item $ \| \wlow\|_{L^2(\Sigma)}  \lesssim \epsilon\| w \|_{L^2(\Sigma)}$, if $u \in \Coord'^\perp$.
\end{enumerate}
\end{cor}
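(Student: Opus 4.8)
The plan is to transcribe the proof of Corollary \ref{Cray0} almost verbatim, making three bookkeeping substitutions and accommodating one genuinely new feature. The substitutions are: the side-exchanging ambient reflection is replaced throughout by the intrinsic involution $\rho$, which is an isometry of $(M,g')$ fixing the catenoidal waists; Lemmas \ref{Lw0uest1} and \ref{Lw0uest2} are replaced by their analogs \ref{Lwuest1} and \ref{Lwuest2}, with all $L^2$ norms over subsets of $M$ computed in $g'$; and, since the coordinate functions need no longer be eigenfunctions of $(M,g')$, the nearby space $\Coord'(M)$ and its orthogonal projection $P$ from Lemma \ref{Lproj} enter in parts (ii) and (iii). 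The new feature is that $u$ lies in $\Eappr^+$ rather than being a single eigenfunction; this costs only an extra additive $\epsilon$ in the relevant Rayleigh bound.

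For (i), I would first write $u=\sum_i u_i$ with $u_i\in\Ecal_{\lambda_i}(M,g')$ and $\lambda_i<2+\epsilon$, so that $L^2(M,g')$-orthogonality gives $\|du\|^2_{L^2(M)}\le(2+\epsilon)\|u\|^2_{L^2(M)}$. Then, exactly as in the proof of \ref{Cray0}(i), I would combine \ref{Lwuest1}(i)--(ii) on the graphical region $U$ with \ref{Lwuest2}(i) and (iii) on the catenoidal region $\Omega$, using that $\rho$ is an isometry of $(M,g')$ and $u$ is even, to obtain $\|u\|^2_{L^2(M)}\Sim_{1+C\epsilon}2\|w\|^2_{L^2(\Sigma)}$ and $\|du\|^2_{L^2(M)}\Sim_{1+C\epsilon}2\|dw\|^2_{L^2(\Sigma)}$. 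Dividing and using $\lambda_1(\Sigma)=2$ (Assumption \ref{Ayaubase}) to absorb $2+\epsilon$ into $(1+C\epsilon)\lambda_1(\Sigma)$ gives (i).

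For (ii), I would first establish the auxiliary bound $|2\langle w,v\rangle_{L^2(\Sigma)}-\langle u,v\rangle_{L^2(M)}|\lesssim\epsilon\|w\|_{L^2(\Sigma)}\|v\|_{C^1(N)}$ for $v\in\Coord(M)$, by the region split of the proof of \ref{Cray0}(ii): decompose $\langle u,v\rangle_{L^2(M)}$ as the sum of the integrals over $\Xp(U)$, $\Xm(U)$, and $\Pi^{-1}_\Sigma(\Omega)$; the last is $\lesssim\epsilon\|w\|_{L^2(\Sigma)}\|v\|_{C^1(N)}$ by Cauchy--Schwarz, \ref{Lwuest2}(i), and the smallness of the catenoidal region; and on each graphical piece, using that $\Xp^*u=\Xm^*u=w$ on $U$ by the $\rho$-evenness of $u$, adding and subtracting $\int_U w\,\Xpm^*v$ and estimating the resulting discrepancies via \eqref{Eubreve2} and \ref{Lwuest1}, each such integral equals $\langle w,v\rangle_{L^2(U)}$ up to the same error, so the two sum to $2\langle w,v\rangle_{L^2(U)}$, which in turn equals $2\langle w,v\rangle_{L^2(\Sigma)}$ up to the same error by \ref{Lwuest2}(iii). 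Finally I would pass from $\langle u,v\rangle_{L^2(M)}$ to $\langle u,Pv\rangle_{L^2(M)}$, paying $|\langle u,v-Pv\rangle_{L^2(M)}|\le\|u\|_{L^2(M)}\|v-Pv\|_{L^2(M)}\lesssim\epsilon\|w\|_{L^2(\Sigma)}\|v\|_{C^1(N)}$ by Lemma \ref{Lproj}(ii) together with the bound $\|u\|_{L^2(M)}\lesssim\|w\|_{L^2(\Sigma)}$ from the proof of (i).

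For (iii), assuming $u\in\Coord'^\perp$: taking $v$ in (ii) to range over a basis of $\Coord(M)$ gives $\langle u,Pv\rangle_{L^2(M)}=0$ since $Pv\in\Coord'$, hence $|\langle w,v\rangle_{L^2(\Sigma)}|\lesssim\epsilon\|w\|_{L^2(\Sigma)}\|v\|_{L^2(\Sigma)}$ (the $C^1(N)$ and $L^2(\Sigma)$ norms of a coordinate function being comparable); moreover, since $u\in\Eappr^+$ is orthogonal to the constants (the eigenvalue $0$ lies outside $(1/4,2+\epsilon)$), the same region split applied with $v\equiv1$, using $\int_M u=0$, gives $|\langle w,1\rangle_{L^2(\Sigma)}|\lesssim\epsilon\|w\|_{L^2(\Sigma)}$. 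Since $\Ecal_{\lambda_0}(\Sigma)\oplus\Ecal_{\lambda_1}(\Sigma)=\R\oplus\Coord(\Sigma)$ by Assumption \ref{Ayaubase}, expanding $\wlow$ in an $L^2(\Sigma)$-orthonormal basis of this space and applying the Pythagorean theorem yields (iii). I do not expect a substantive obstacle, since the analytic content has been isolated in Lemmas \ref{Lemmbreve}, \ref{Lproj}, \ref{Lwuest1} and \ref{Lwuest2}; the one step demanding real care is the bookkeeping in (ii) — tracking the projection $P$ and the fact that the graph maps $\Xpm$ are merely approximately the inclusion of $\Sigma$ — which is the exact analog of the care needed in the proof of \ref{Cray0}.
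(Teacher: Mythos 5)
Your proposal is correct and follows the same route as the paper: the paper's proof of \ref{Cray1}(i) is omitted as being ``essentially the same as the proof of \ref{Cray0}(i)'', (ii) is derived from the auxiliary bound $|2\langle w,v\rangle_{L^2(\Sigma)}-\langle u,v\rangle_{L^2(M)}|\lesssim\epsilon\|w\|_{L^2(\Sigma)}\|v\|_{C^1(N)}$ plus the $\|v-Pv\|$ correction from Lemma \ref{Lproj}(ii), and (iii) follows exactly as you describe by taking $v$ a coordinate function or constant and applying the Pythagorean theorem. You correctly flag and handle the only genuinely new point—that $u\in\Eappr^+$ is not a single eigenfunction so one uses $\|du\|^2_{L^2(M)}\le(2+\epsilon)\|u\|^2_{L^2(M)}$ in place of the exact Rayleigh identity—and you also spell out the routine $\|u\|_{L^2(M)}\lesssim\|w\|_{L^2(\Sigma)}$ and $\|v\|_{L^2(M)}\lesssim\|v\|_{C^1(N)}$ bookkeeping that the paper leaves implicit in combining the auxiliary estimate with \ref{Lproj}.
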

\begin{proof}
The proof of (i) is omitted because it is essentially the same as the proof of \ref{Cray0}(i).  For (ii), arguing as in the proof of \ref{Cray0}(ii) shows that 
\begin{align*}
|2 \langle w , v \rangle_{L^2(\Sigma)} 
	-  \langle u, v \rangle_{L^2(M)}|
	 \lesssim \epsilon \| w \|_{L^2(\Sigma)}  \| v \|_{C^1(N)}
\end{align*}
for any coordinate function $v$.  On the other hand, we have also
\begin{align*}
|\langle u, v \rangle_{L^2(M)} - \langle u, Pv \rangle_{L^2(M)} | \leq \| u \|_{L^2(M)} \| v - P v \|_{L^2(M)},
\end{align*}
and (ii) follows from combining the preceding with \ref{Lproj}.

For (iii), suppose $u \in \Coord'^\perp$.  By taking $v$ in (ii) to be a coordinate function, it follows that 
\begin{align*}
| \langle w, v \rangle_{L^2(\Sigma)} | \lesssim \epsilon \| w \|_{L^2(\Sigma)} \| v \|_{L^2(\Sigma)}. 
\end{align*}
Combining this with the analogous and easier estimate where $v$ is a constant function, (iii) follows from this estimate and the Pythagorean theorem, upon expanding $\wlow$ into an $L^2(\Sigma)$-orthonormal basis for $\Ecal_{\lambda_0}(\Sigma)\oplus \Lapone(\Sigma) = \R \oplus \Coord(\Sigma)$.
\end{proof}

\begin{lemma}
\label{Lecj}
$\Eappr^+ \subset \Coord'$.
\end{lemma}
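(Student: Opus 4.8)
The plan is to run the exact analogue of the proof of Proposition \ref{Pee} from Section \ref{Ssph}, now with the spectrally defined space $\Coord'$ in place of $\Coord$ and with $\Eappr^+$ in place of $\Ecal^+_{\lambda_1}(\Mbreve)$. Since every function in $\Coord'$ is an eigenfunction of $(M, g')$ with eigenvalue in $(1/4, 2+\epsilon)$ (recall \ref{Lproj} and \ref{deappr}), we have $\Coord' \subseteq \Eappr$; moreover $\Coord'$ is $\rho$-invariant, so $\Eappr^+$ splits orthogonally into its part inside $\Coord'$ and its part orthogonal to $\Coord'$. It therefore suffices to prove that the only $u \in \Eappr^+$ orthogonal to $\Coord'$ is $u = 0$.

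To do this I would fix such a $u$, form the approximate eigenfunction $w$ on $\Sigma$ as in \ref{dw}, and exploit its decomposition $w = \wlow + \whigh$. On one hand the variational characterization of $\lambda_2(\Sigma)$ gives
\[
\| \nabla w \|^2_{L^2(\Sigma)} \;\geq\; \| \nabla \whigh \|^2_{L^2(\Sigma)} \;\geq\; \lambda_2(\Sigma)\bigl( \| w \|^2_{L^2(\Sigma)} - \| \wlow \|^2_{L^2(\Sigma)} \bigr).
\]
On the other hand, \ref{Cray1}(i) bounds $\| \nabla w \|^2_{L^2(\Sigma)}$ by $(1+C\epsilon)\lambda_1(\Sigma)\| w \|^2_{L^2(\Sigma)}$, while \ref{Cray1}(iii) --- applicable precisely because $u$ is orthogonal to $\Coord'$ --- gives $\| \wlow \|_{L^2(\Sigma)} \lesssim \epsilon \| w \|_{L^2(\Sigma)}$. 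Combining these yields $(1+C\epsilon)\lambda_1(\Sigma) \geq (1-C\epsilon)\lambda_2(\Sigma)$ unless $w = 0$. By Assumption \ref{Ayaubase} the coordinate functions span $\Ecal_{\lambda_1}(\Sigma)$, and since they are eigenfunctions with eigenvalue $2$ on the minimal surface $\Sigma$ this forces $\lambda_1(\Sigma) = 2 < \lambda_2(\Sigma)$; hence taking $\epsilon$ small enough in terms of the spectral gap of $\Sigma$ forces $w = 0$. Finally $w = 0$ implies $u = 0$, because $\| u \|^2_{L^2(M)} \Sim_{1+C\epsilon} 2\| w \|^2_{L^2(\Sigma)}$ by \ref{Lwuest1} and \ref{Lwuest2}.

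I expect no serious obstacle, since the analytic content --- the comparability of the $L^2$ and Dirichlet norms of $u$ and $w$, the non-concentration of energy on the catenoidal necks, and the resulting Rayleigh-quotient and orthogonality estimates --- is already packaged into Lemmas \ref{Lwuest1} and \ref{Lwuest2} and Corollary \ref{Cray1}. The only points requiring a little care, and the only genuine differences from the totally geodesic case treated in \ref{Pee}, are (a) that $\Eappr^+$ is a direct sum of eigenspaces with eigenvalues up to $2+\epsilon$ rather than a single first eigenspace, which is exactly why the bound in \ref{Cray1}(i) reads $(1+C\epsilon)\lambda_1(\Sigma)$ and why the argument still closes once $\lambda_1(\Sigma) = 2 < \lambda_2(\Sigma)$; and (b) that one must work with the $\rho$-invariant, spectrally defined space $\Coord'$ from \ref{Lproj}, whose $O(\epsilon)$-closeness to $\Coord$ is what makes \ref{Cray1}(iii), and hence the final contradiction, available.
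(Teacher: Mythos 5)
Your proposal is correct and follows essentially the same route as the paper's own proof: the same reduction to showing $\Eappr^+\cap\Coord'^\perp=\{0\}$, the same variational lower bound on $\|dw\|^2_{L^2(\Sigma)}$ via $\lambda_2(\Sigma)$, the same use of \ref{Cray1}(i) and \ref{Cray1}(iii) to close the inequality, and the same conclusion $w=0\Rightarrow u=0$ from \ref{Lwuest1}--\ref{Lwuest2}. The only (welcome) addition is that you explicitly justify the reduction step by noting $\Coord'\subseteq\Eappr$ and that $\Coord'$ is $\rho$-invariant, which the paper asserts with ``clearly.''
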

\begin{proof}

The proof is very similar to the proof of \ref{Pee}, and it clearly suffices to prove that $\Eappr^+\cap \Coord'^\perp = \{0\}$.  To this end, fix $u \in \Eappr^+$, and let $w$ be as in \ref{dw}; 
arguing as in the proof of \ref{Pee} shows that
\begin{align*}
\| d w \|^2_{L^2(\Sigma)}
&\geq
\lambda_2(\Sigma) ( \| w\|^2_{L^2(\Sigma)} - \| \wlow \|^2_{L^2(\Sigma)}).
\end{align*} 
If additionally $u \in \Coord'^\perp$, combining the preceding with \ref{Cray1}(i) and \ref{Cray1}(iii) shows that
\begin{align}
\label{Eray}
2 (1+C\epsilon) \|w\|^2_{L^2(\Sigma)} \geq
\| d w\|^2_{L^2(\Sigma)}
 \geq \lambda_2(\Sigma) (1-C\epsilon) \| w \|^2_{L^2(\Sigma)}.
\end{align}
Since $2< \lambda_2(\Sigma)$, taking $\epsilon>0$ sufficiently small implies $w=0$, hence $u =0$.
\end{proof}

\subsection*{The odd part $\Eappr^-$}
\phantom{ab}

\label{S:nontot}
In order to characterize $\Eappr^-$, we now assume the base surface $\Sigma$ is a Clifford torus doubling:
\begin{assumption}
\label{Abd}
We assume $\Mbreve \subset \Sph^3$ is a doubling of the Clifford torus constructed in Section \ref{Scliff}.
\end{assumption}

\begin{convention}
\label{Ci}
In this subsection, denote by $i \in \N$ the smallest positive integer such that $\Ecal^-_{\lambda_i}(M, g')$ is nontrivial.
\end{convention}

In Proposition \ref{Peigen}, we show that $\Eappr^- = \{0\}$ by estimating $\lambda_i= \lambda_i (M, g')$, and showing it is close to $4$.  To begin, let $u\in \Ecal^-_{\lambda_i}(M)$ be nonzero.  Since $u$ vanishes on $\{\zz= 0\}$, which separates $M$, it follows from the Courant nodal domain theorem that $u$ spans $\Ecal^-_{\lambda_i}(M, g')$.

\begin{lemma}
\label{Lgroup2}
$\Ecal^-_{\lambda_i}(M, g')$ is $\group$-invariant, that is $\gbold^*v = v$ for each $v \in \Ecal^-_{\lambda_i}(M)$ and  each $\gbold \in \group$. 
\end{lemma}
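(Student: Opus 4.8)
The plan is to show that $\group$ acts on $(M,g')$ by isometries that commute with $\rho$, conclude that $\group$ preserves the one‑dimensional eigenspace $\Ecal^-_{\lambda_i}(M,g')$, and then rule out a sign by invoking positivity of the first Dirichlet eigenfunction on one side of $M$.

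First I would record that each $\gbold\in\group$ acts on $M$ by an isometry commuting with $\rho$. Indeed $M=M[\varphi,\zerobold]$ is $\group^L_{\sym}$‑symmetric by its construction in Section \ref{Scliff} and $\group\leq\group^L_{\sym}$, so $\gbold$ restricts to a diffeomorphism of $M$ which, being the restriction of an ambient isometry of $(\Sph^3,g)$, is an isometry of $(M,g)$. By \eqref{dL} it suffices to treat the generators $\Rcapunder_{p_i,\nu(p_i)}$. Each such reflection fixes $\Span(p_i,\nu(p_i))$ pointwise; since the geodesic of $\Sph^3$ normal to $\T$ at $p_i$ is the great circle $\{\cos\zz\,p_i+\sin\zz\,\nu(p_i):\zz\in\R\}\subset\Span(p_i,\nu(p_i))$, this geodesic is fixed pointwise, so $\Rcapunder_{p_i,\nu(p_i)}$ acts as $\zz\mapsto\zz$ along it. As $\Rcapunder_{p_i,\nu(p_i)}$ preserves $\T$ and hence the Fermi structure about $\T$, the side to which it sends $\{\zz>0\}$ is locally constant on $\T$; being the $+$ side at $p_i$, it is the $+$ side everywhere, so $\Rcapunder_{p_i,\nu(p_i)}$ preserves each side of $\T$ and commutes with the involution $\zz\mapsto-\zz$, whose restriction to $M$ is $\rho$. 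The same then holds for every $\gbold\in\group$, and therefore $\gbold^*g'=\gbold^*\tfrac12(g+\rho^*g)=\tfrac12(g+\rho^*\gbold^*g)=g'$, so $\group$ acts on $(M,g')$ by isometries commuting with $\rho$.

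It follows that $\group$ preserves the eigenspaces of $(M,g')$ together with the even--odd splitting induced by $\rho$, and in particular preserves $\Ecal^-_{\lambda_i}(M,g')$. Recall from the discussion preceding the lemma that this space is spanned by the single function $u$; moreover, by the same reasoning $u$ restricts on the component $M^+:=M\cap\{\zz>0\}$ to the first Dirichlet eigenfunction of the connected surface $M^+$, which we normalize to be positive. Now fix $\gbold\in\group$. Then $\gbold^*u$ is again an odd eigenfunction with eigenvalue $\lambda_i$, so $\gbold^*u=c\,u$ with $c\in\R$, and $|c|=1$ since $\gbold^*$ is an $L^2(M,g')$‑isometry. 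Because $\gbold$ preserves $M^+$ and $u>0$ on $M^+$, the function $\gbold^*u=u\circ\gbold$ is positive on $M^+$, which forces $c=1$; hence $\gbold^*u=u$, and by oddness $\gbold^*v=v$ for every $v\in\Ecal^-_{\lambda_i}(M,g')=\R u$. The only step requiring care is the verification that the generators of $\group$ preserve, rather than interchange, the two sides of $\T$ --- equivalently, that $\group$ commutes with $\rho$ --- for which the description of $\Rcapunder_{p,\nu(p)}$ as the linear reflection of $\R^4$ fixing $\Span(p,\nu(p))$ is essential; the remainder is a routine consequence of the simplicity of the first Dirichlet eigenvalue and the invariance of eigenspaces under isometries.
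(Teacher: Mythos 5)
Your proof is correct and follows the same overall strategy as the paper's: exploit that $\Ecal^-_{\lambda_i}(M,g')$ is one-dimensional, so $\gbold^*u = c\,u$, and then pin down the constant $c$. You are, however, more careful than the paper at two points, and the second one closes an actual gap. First, you verify explicitly that each $\gbold\in\group$ restricts to an isometry of $(M,g')$ commuting with $\rho$; the paper takes this for granted, but without the observation that $\Rcapunder_{p_i,\nu(p_i)}$ fixes the normal geodesic at $p_i$ pointwise and hence preserves (rather than interchanges) the two sides of $\T$, it is not clear that $\gbold^*g' = g'$ or that $\gbold^*$ respects the $\pm$-decomposition. Second, and more substantively, the paper concludes with ``Since $\gbold$ is an isometry of $(M, g')$, this multiple must be $1$,'' but being an $L^2$-isometry only gives $|c|=1$, i.e.\ $c=\pm 1$; your positivity argument---that $u$ restricts on $M^+$ to the first Dirichlet eigenfunction, which is signed, and that $\gbold$ preserves $M^+$---is precisely what is needed to exclude $c=-1$. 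So your proof not only reproduces the paper's route but also supplies the missing step.
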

\begin{proof}
Because $u$ spans $\Ecal^-_{\lambda_i}(M)$, it suffices to prove the lemma for $v = u$.  Note that $\gbold^* u = u \circ \gbold \in \Ecal^-_{\lambda_i}(M)$, hence must be a multiple of $u$.  Since $\gbold$ is an isometry of $(M, g')$, this multiple must be $1$, and the proof is complete.
\end{proof}

\begin{definition}
\label{dweo}
Given $u \in \Ecal^-_{\lambda_i}(M, g')$, define a piecewise smooth function $w$ on $\Sigma$ by requesting that
\begin{align*}
w &= \Xp^* u =- \Xm^* u 
\quad
\text{on}
\quad
\Omega
\quad
\text{and}
\quad
\Delta w = 0
\quad
\text{on} 
\quad
\Sigma \setminus \Omega.
\end{align*}
Define also the decomposition $w = \wave + \wosc$, where $\wave$ is rotationally symmetric as in \ref{dave}.
\end{definition}

\begin{lemma}
\label{Luo}
Given $u$ and $w$ as in \ref{dweo}, the following hold. 
\begin{enumerate}[label=\emph{(\roman*)}]
\item $\| u\|^2_{L^2(M)} \Sim_{1+C\epsilon} 2\| w \|^2_{L^2(\Sigma)}$.
\item $\| d w\|_{L^2(U)} \Sim_{1+C\epsilon} \|d u \|_{L^2(X_\pm(U))}.$
\end{enumerate}
\end{lemma}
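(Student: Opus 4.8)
The plan is to derive both estimates by combining, in the $\rho$-odd setting, the graphical-region comparison of \ref{Lwuest1} with the catenoidal concentration bounds of \ref{Lwuest2}; the only genuinely new input relative to the even case treated earlier in this section (and in Section \ref{Ssph}) is the analysis of an odd eigenfunction on the catenoidal bridges, where the even modes $\cosh(k\sss)e^{ik\theta}$ must be replaced by the $\rho$-anti-invariant ones. I would dispose of (ii) first. On $U$ we have $w=\Xpm^*u$ by \ref{dweo}, so $\|dw\|_{L^2(U)}$ and $\|du\|_{L^2(\Xpm(U))}$ differ only through the distortion of the metric and of the area element under the graph map $\Xpm$, which by \eqref{Eubreve2} is controlled by $\tau_{\max}^{8/9}$; this yields the asserted $\Sim_{1+C\epsilon}$ exactly as in \ref{Lwuest1}(ii), the sign in $w=\Xp^*u=-\Xm^*u$ being irrelevant for norms.

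For (i) I would split $\|u\|^2_{L^2(M)}=\|u\|^2_{L^2(\Xp(U))}+\|u\|^2_{L^2(\Xm(U))}+\|u\|^2_{L^2(\Pi^{-1}_\Sigma(\Omega))}$. Since $\rho$ is an exact isometry of $(M,g')$ interchanging the two graphical sheets and $|u|^2$ is $\rho$-invariant, the first two terms are equal, and each is $\Sim_{1+C\epsilon}\|w\|^2_{L^2(U)}$ by the $L^2$-norm version of the computation in (ii) (again parity plays no role, since $\|u\|^2_{L^2(\Xp(U))}=\int_U w^2\,\Xp^*d\sigma$). It then remains to show that (a) the bridge contribution $\|u\|^2_{L^2(\Pi^{-1}_\Sigma(\Omega))}$ is $\lesssim\epsilon^2\|u\|^2_{L^2(M)}$, and (b) $\|w\|^2_{L^2(\Omega)}\lesssim\epsilon^2\|w\|^2_{L^2(\Sigma)}$; granting these, $\|u\|^2_{L^2(M)}\Sim_{1+C\epsilon}2\|w\|^2_{L^2(U)}\Sim_{1+C\epsilon}2\|w\|^2_{L^2(\Sigma)}$, which is (i). Estimate (b) is precisely \ref{Lwuest2}(iii), whose proof uses only that $w$ is harmonic on $\Omega$ with bounded boundary data and therefore applies to the present $w$ verbatim.

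The substantive step is (a), and here I would follow the argument of \ref{Lw0uest2}(i)--(ii): fix $p\in L$, pass through the Fermi exponential chart to reduce to the model catenoid $\cat$ with its Euclidean metric $\gpeuc$ (close to $\gcir$), replace $u$ by $\ucheck$, and subtract the $\gcir$-harmonic function $\ucir$ with the same boundary values on $\cat(\rtop)$, where $\runder=\tau_p^{2\alpha}$ and $\rtop=\tau_p^{\alpha}$; after scaling the bridge to unit size the inhomogeneous term $(\Delta_{\gpeuc}-\Delta_{\gcheck})\ucheck-\lambda_i\ucheck$ is negligible, using the smallness of $\gpeuc-\gcheck$ together with $\tau_p^{2}\lambda_i$ being as small as needed. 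The latter requires an a priori bound $\lambda_i\le C$, which I would obtain from a $\rho$-odd test function equal to $\pm 1$ away from the necks and interpolating linearly in the catenoidal coordinate across each neck: its Rayleigh quotient is $\lesssim|L|/|\log\tau_{\min}|$, hence bounded in terms of $v$ and $k$ by \ref{Am}. The odd-specific point is the estimate of $\ucir$: since $\ucir$ is $\rho$-anti-invariant I would expand it in the complete orthogonal family of $\rho$-anti-invariant $\gcir$-harmonic functions on $\cat$, namely $\sss$ together with $\sinh(k\sss)e^{ik\theta}$ for $k\in\Z\setminus\{0\}$, and verify that each mode $v_k$ satisfies $\|v_k\|_{L^2(\cat(\runder),\gcir)}\lesssim\epsilon\|v_k\|_{L^2(\cat(\rtop),\gcir)}$, and similarly for $dv_k$, because both $\sinh(k\sss)$ and $\sss$ grow away from $\sss=0$ while the $\gcir$-area element grows like $\cosh^{2}\sss$; this is the analogue of \eqref{Emodes} and exploits the largeness of $\rtop/\runder=\tau_p^{-\alpha}$, equivalently of the collar length $\sim\alpha|\log\tau_p|$ of $\cat(\rtop)\setminus\cat(\runder)$. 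The Pythagorean theorem then gives $\|\ucir\|_{L^2(\cat(\runder))}\lesssim\epsilon\|\ucir\|_{L^2(\cat(\rtop))}$, and undoing the reductions yields (a). I expect the only real friction beyond the even case to be checking this concentration estimate for the degenerate-looking zero mode $v_0=\sss$, whose growth is linear rather than exponential; but this too reduces to an explicit integral of $\sss^2\cosh^2\sss$, which is still dominated by the outer part of the collar.
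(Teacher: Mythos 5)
Your proposal is correct and takes essentially the approach the paper has in mind, but you add substance where the paper is terse (it only says the proof is ``similar to'' \ref{Lwuest1}, which literally covers only the graphical part and hence only (ii); your observation that (i) also needs the analog of \ref{Lwuest2}(i) and (iii) is exactly right). You also correctly flag the a priori bound on $\lambda_i$, which is a genuine subtlety hidden by the paper's omitted proof: in \ref{Lwuest2} the eigenvalue is bounded because $u\in\Eappr^+$, whereas in \ref{dweo} one only knows that $\lambda_i$ is the lowest odd eigenvalue, and the inhomogeneous term $\lambda_i\ucheck$ after rescaling the neck is controlled only if $\lambda_i$ is $O(1)$; your $\rho$-odd test function gives the needed bound. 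The identification of the odd harmonic basis $\{\sss\}\cup\{\sinh(k\sss)e^{ik\theta}\}_{k\neq 0}$ is also the right replacement for \eqref{Emodes}.

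One side remark in your write-up is false, though it does not affect the proof: you assert that ``each mode $v_k$ satisfies $\|v_k\|_{L^2(\cat(\runder),\gcir)}\lesssim\epsilon\|v_k\|_{L^2(\cat(\rtop),\gcir)}$, and similarly for $dv_k$.'' The $L^2$ statement is fine for all the odd modes, including $v_0=\sss$ (as you check via the $\sss^2\cosh^2\sss$ integral), but the \emph{gradient} concentration fails for $v_0=\sss$: in the conformal cylinder coordinates $|d\sss|^2_{\gcir}\,dA_{\gcir}$ is constant, so $\|d\sss\|^2_{L^2(\cat(r),\gcir)}\propto \sss_r\approx|\log(r/\tau_p)|$, and the ratio $\sss_{\runder}/\sss_{\rtop}\approx(1-2\alpha)/(1-\alpha)$ is bounded away from zero, not $O(\epsilon)$. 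This is precisely why \ref{Luo} omits the gradient-concentration statement that its even counterpart \ref{Lwuest2}(ii) contains; only the $L^2$ concentration for $\ucir$ is claimed and used, and that part of your argument is sound. You should simply delete the ``and similarly for $dv_k$'' clause.
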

\begin{proof}
We omit the proof because it is similar to the proof of \ref{Lwuest1}.
\end{proof}

\begin{prop}
\label{Peigen}
Let $\Mbreve$ be as in \ref{Abd} and $\lambda_i$ be as in \ref{Ci}.  Then $\lambda_i \geq 4(1-C \epsilon)$.  In particular, $\Eappr^-= \{ 0\}$. 
\end{prop}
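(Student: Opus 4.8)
The plan is to estimate $\lambda_i$ from below by $4(1-C\epsilon)$ using a Rayleigh quotient argument on the function $w$ from Definition~\ref{dweo}, exploiting the $\group$-invariance of $u$ established in Lemma~\ref{Lgroup2} together with the spectral analysis of the quotient torus in Appendix~\ref{Stori}. First I would record that, since $u$ is odd and $\group$-invariant, the function $w$ on $\Sigma$ is also $\group^L_{\sym}$-invariant and, because $w$ is obtained from an odd function, it vanishes on $\Lpar$ (the $\group$-orbit of the torus knot $\gamma$) just as $\Phi_\ave$ does; more importantly, the rotationally invariant part $\wave$ of $w$ (in the sense of Definition~\ref{dave}) inherits the vanishing on $\Lpar$. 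The key point is that a $\group^L_{\sym}$-invariant function which is odd across the waists of the bridges, when averaged along the parallel circles of $\gamma$, must be orthogonal to the constants on each such circle, and more: by the largeness of $m$ relative to $k|v|$ the circles of $\gamma$ are long, so the relevant one-dimensional operator $\partial^2_{\ssstilde} + \tfrac{4}{m^2}$ on the transverse interval $[0,\pi\sqrt2/k|v|]$ with the boundary conditions forced by the symmetry and by oddness has first eigenvalue close to $4$ after rescaling. I would make this precise by the same kind of ODE comparison used in Lemma~\ref{LPhipest}(iii).

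The main body of the argument mirrors Proposition~\ref{Pee} and Lemma~\ref{Lecj}: using Lemma~\ref{Luo} to transfer the $L^2$ and Dirichlet norms of $u$ on $\Mbreve$ to those of $w$ on $\Sigma$ (up to $1+C\epsilon$ factors and the catenoidal-region estimates analogous to Lemma~\ref{Lwuest2}, whose proofs carry over verbatim), one gets
\begin{align*}
\lambda_i \,\|u\|^2_{L^2(M)} \;=\; \|du\|^2_{L^2(M)} \;\Sim_{1+C\epsilon}\; 2\,\|dw\|^2_{L^2(\Sigma)},
\qquad \|u\|^2_{L^2(M)} \;\Sim_{1+C\epsilon}\; 2\,\|w\|^2_{L^2(\Sigma)},
\end{align*}
so it suffices to bound the Rayleigh quotient $\|dw\|^2_{L^2(\Sigma)}/\|w\|^2_{L^2(\Sigma)}$ from below by $4(1-C\epsilon)$. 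Decomposing $w = \wave + \wosc$, the oscillatory part $\wosc$ contributes a Rayleigh quotient bounded below by a positive constant times $m^2$ (again by Appendix~\ref{Stori} applied on the quotient torus, exactly as in the proof of Lemma~\ref{LPhipest}(iii)), which is $\gg 4$; so the constraint is only on $\wave$. For $\wave$, which is a rotationally invariant function vanishing on $\Lpar$, the Rayleigh quotient on $\Sigma$ reduces to the one-dimensional Rayleigh quotient $\int |(\wave)'|^2/\int |\wave|^2$ over the transverse interval with Dirichlet conditions at the endpoints (the points of $\Lpar$), whose minimum, since the interval has length $\pi\sqrt2/k|v|$ and this is small, is $(k|v|)^2 \cdot 2 \gg 4$ --- except on the portions of $\Sigma$ away from the bridges where one must account correctly for the catenoidal contribution. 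Here the right comparison is with the linearized operator $\Lcal_\Sigma = \Delta + 4$, not $\Delta$ alone: I would instead run the argument with $\|dw\|^2 - 4\|w\|^2$ and show the left side is $\geq -C\epsilon\|w\|^2$, using the catenoidal estimates to control the bridge contributions and the ODE comparison for $\wave$ on the graphical part; this gives $\lambda_i \geq 4(1-C\epsilon)$ directly.

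The hard part will be handling the catenoidal regions correctly in the odd case: unlike the even case of Section~\ref{Ssph}, where oddness forced the eigenfunction to vanish on $\Spheq$ and a clean nodal-domain argument sufficed, here $u$ is odd across the catenoid waists but the catenoid necks genuinely carry energy, and one must show this energy is compatible with the lower bound $4(1-C\epsilon)$ rather than dragging $\lambda_i$ down. This is done by the same separation-of-variables estimate on the model catenoid as in Lemma~\ref{Lw0uest2} (modes $\cosh(k\ssstilde)e^{ik\theta}$, with the odd-sector boundary condition picking out $\sinh$-type profiles), showing the $L^2$ mass of $u$ concentrated near the waist is a small fraction of its mass on the full bridge, so the bridge does not spoil the estimate; the precise bookkeeping of how the odd boundary condition interacts with the parametrizations $X_\pm$ is the one place requiring care. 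Once $\lambda_i \geq 4(1-C\epsilon)$ is established, the final assertion $\Eappr^- = \{0\}$ is immediate, since $\Eappr$ by Definition~\ref{deappr} consists of eigenspaces with eigenvalue in $(1/4, 2+\epsilon)$ and $4(1-C\epsilon) > 2+\epsilon$ for $\epsilon$ small.
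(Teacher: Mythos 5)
Your proposal rests on a claim that is false and is directly contradicted by the paper: you assert that $\wave$ ``inherits the vanishing on $\Lpar$ \ldots just as $\Phi_\ave$ does.'' But $\Phi_\ave$ does \emph{not} vanish on $\Lpar$; Lemma~\ref{Lphiave}(ii) states $\Phi_\ave|_{\Lpar} = m/(2F) \neq 0$, and indeed $\Phi_\ave$ in Lemma~\ref{Lphiave}(i) is a cosine profile that is generically nonzero on the knot. Nor does $w$ vanish on $\Lpar$: the oddness of $u$ is with respect to $\rho$, the reflection across the catenoid waists $\{\zz=0\}$, which project to small circles around the \emph{discrete} points of $L$, not to the full one-dimensional curve $\Lpar$. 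Away from $L$, the curve $\Lpar$ lies in the graphical region where $w = X_+^*u$, and there is no constraint forcing $w$ to vanish. Your entire first paragraph --- the Dirichlet condition at $\Lpar$, the consequent one-dimensional Rayleigh quotient $\approx 2(k|v|)^2$, the ``orthogonal to constants on each parallel circle'' step --- collapses with this premise. The hedge you offer afterward (``run the argument with $\|dw\|^2-4\|w\|^2$'') is not worked out and does not account for the logarithmic singularity of $w$ at $L$, which makes a naive integration-by-parts boundary term non-negligible.

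The paper's actual argument for the $\wave$-part is quite different and is the heart of the proof: it shows $\wave > 0$ on the transverse interval, introduces the logarithmic derivative $F^{\wave} = \partial_\sss \wave/\wave$ satisfying the Riccati-type equation $dF^{\wave}/d\sss = \lambda + (F^{\wave})^2 + O(\epsilon)$, and likewise $F^{\varphi_{\ave}}$ for the LD-solution average satisfying the same equation with $\lambda$ replaced by $4$. A flux identity, obtained by integrating $\Delta w + \lambda w = E^{err}$ over the annular region $\Omegapar'\setminus\Omega'$ and comparing with the analogous identity for $\varphi_{\ave}$, pins down $F^{\wave} = F^{\varphi_{\ave}} + O(1/m) + O(\epsilon)$ at $\partial\Omegapar'$; by the reflection symmetry both vanish at $\sss = \sss_{\max}$. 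Two Riccati solutions with matched boundary data and driving constants $\lambda$ and $4$ force $\lambda = 4 + O(\epsilon)$. This is the idea your proposal misses. (Your treatment of $\wosc$ via the $\sim m^2$ spectral gap on the quotient torus, and your final remark that $4(1-C\epsilon) > 2+\epsilon$ implies $\Eappr^- = \{0\}$, do agree with the paper.)
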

\begin{proof}
Using that $M = \Pi^{-1}_\Sigma ( U) \cup \Pi^{-1}_\Sigma(\Omega)$ and \ref{Luo}, we estimate
\begin{align}
\label{Eupp0}
\|d u \|^2_{L^2(M)} \geq  \| d u\|^2_{L^2(\Pi^{-1}_\Sigma(\Omega))} 
+ 2(1-C\epsilon) \| d \wosc\|^2_{L^2(\Sigma)}.  
\end{align}
By Lemma \ref{Lgroup2}, $w$ is $\group$-invariant; by arguing as in the proof of \ref{LPhipest} and estimating a uniform lower bound for the first eigenvalue of $\Delta$ on $\Sigma$, when restricted to $\group$-symmetric functions with average zero, it follows that 
\begin{align*}
\| d \wosc \|^2_{L^2(\Sigma)} \geq Cm^2 \| \wosc \|^2_{L^2(\Sigma)}, 
\end{align*}
where $m \in \N$ may be taken as large as needed.  In combination with \eqref{Eupp0}, this implies
\begin{align}
\label{Eupp}
\| d u \|^2_{L^2(M)} \geq  \| d u \|^2_{L^2(\Pi^{-1}_\Sigma(\Omega))}  + 2(1-C \epsilon) Cm^2\| \wosc \|^2_{L^2(M)}.
\end{align}
If $\| \wosc\|^2_{L^2(\Sigma)} > \epsilon \| w \|^2_{L^2(\Sigma)}$, then in combination with \eqref{Eupp} and Lemma \ref{Luo}(ii), we find
\begin{align*}
\| d u \|^2_{L^2(M)} \geq 2(1-C\epsilon) \epsilon m^2 \| u \|^2_{L^2(M)}, 
\end{align*} 
and the conclusion of the Proposition follows by taking $m$ large enough in terms of $\epsilon$. 

We may therefore assume that
\begin{align}
\label{Ewsm}
\| \wosc\|^2_{L^2(\Sigma)} \leq \epsilon \| w \|^2_{L^2(\Sigma)}, 
\quad 
\text{hence}
\quad
\| w \|^2_{L^2(\Sigma)} \Sim_{1+C\epsilon} \|\wave\|^2_{L^2(\Sigma)}. 
\end{align}

Now let $\Omega' : = D^{\Sigma}_L(m/ 100)$.  Because $\Delta u + \lambda_i u = 0$, using the closeness of the metrics $g'$ and $X^* g'$,  \eqref{Ewsm} and standard elliptic theory, it follows that
\begin{align}
\label{Eweq}
\Delta w + \lambda w = E^{err}
\quad
\text{on}
\quad
\Sigma \setminus \Omega',
\quad
\text{where}
\quad
\| E^{err} \|_{C^{2, \beta}(\Sigma\setminus \Omega')} \leq C \epsilon \| w\|_{L^2(\Sigma)}. 
\end{align}
For convenience, we now scale $u$ to assume without loss of generality that $\|w\|_{L^2(\Sigma)} = 1$.

\emph{Case 1: $\Mbreve$ is as in Theorem \ref{Tcliff}.}  In this case, each fundamental domain for $\group$ on $M$ contains one catenoidal bridge. 
By Lemma \ref{Lgammad}, the function $\sss : = \dbold_{\Lpar}$ is smooth for $\sss \in (0, \sss_{\max} )$, where $\sss_{\max}: = \pi/ (\sqrt{2} k |v|)$.   It follows from \ref{Ewsm} that for $\sss \in (m/100, \sss_{\max} )$, $\wave$ satisfies the ODE
\begin{align}
\label{Ewode}
\partial^2_{\sss} \wave + \lambda \wave = E^{err}_{\ave}.
\end{align}
Clearly $w_{\ave}$ is positive for $\sss \in (m/100, \sss_{\max})$, and a short calculation using \eqref{Ewode} shows the function $F^{\wave}$ defined by $F^{\wave}(\sss) : = \frac{\partial_{\sss} \wave}{\wave}$ satisfies 
\begin{align}
\label{Ewflux}
\frac{d F^{\wave}}{d \sss} = \lambda + \left( F^{\wave}\right)^2 + O(\epsilon). 
\end{align}
Now let $\Omegapar' : = D_{\Lpar}(m/100)$.  
By integrating \eqref{Eweq} on $\Omegapar' \setminus \Omega'$ and integrating by parts, we find 
\begin{align}
\label{Ewflux11}
- \int_{\partial \Omega'} \frac{\partial w}{\partial \nu} + \int_{\partial \Omegapar'} \frac{\partial w}{\partial \nu} +  \lambda \int_{\Omegapar' \setminus \Omega'} w = \int_{\Omegapar' \setminus \Omega'} E^{err}_{\ave}.
\end{align}
Dividing through in \eqref{Ewflux11} by $\wave|_{\{\sss = m/100\}}$, estimating the leftmost integral in \eqref{Ewflux11} using the logarithmic behavior of $w$, and using \eqref{Ewsm} and \eqref{Eweq}, we conclude
\begin{align}
\label{Ewflux2}
\sum_{p \in L} \frac{2\pi}{|\log m \tau_p|} = \int_{\partial \Omegapar'} F^{\wave}+ O(1/m) + O(\epsilon).
\end{align}

On the other hand, now let $\varphi$ be the LD solution associated to $\Mbreve$.  Its average $\varphi_{\ave}$ satisfies
\begin{align}
\partial^2_{\sss} \varphi_{\ave} + 4 \varphi_{\ave} = 0
\quad
\text{for} 
\quad
\sss \in (m/100, \sss_{\max})
\end{align}
and the function $F^{\varphi_{\ave}}$ defined by $F^{\varphi_{\ave}} : = \frac{\partial_{\sss} \varphi_{\ave}}{\varphi_{\ave}}$ satisfies
\begin{align}
\label{Efflux}
\frac{d F^{\varphi_{\ave}}}{d \sss} = 4 + \left( F^{\varphi_{\ave}}\right)^2.
\end{align}
Furthermore, integrating the equation $\Delta \varphi + 4 \varphi = 0$ on $\Omegapar' \setminus \Omega'$, and integrating by parts shows that
\begin{align}
\label{Efflux2}
\sum_{p \in L} \frac{2\pi}{|\log m\tau_p|} = \int_{\partial \Omegapar'} F^{\varphi_{\ave}}+O(1/m).
\end{align}
Finally, comparing \eqref{Ewflux2} with \eqref{Efflux2} shows that $F^{\wave} = F^{\varphi_{\ave}} + O(1/m) + O(\epsilon)$ along $\partial \Omegapar'$.  By the symmetry, note that
\begin{align*}
0 = F^{\wave} = F^{\varphi_{\ave}}
\quad
\text{along}
\quad
\sss = \sss_{\max}.
\end{align*}
It follows from the preceding that $4 - \lambda_i =O(\epsilon)$
completing the proof of the Proposition in this case.
\emph{Case 2: $\Mbreve$ is as in Theorem \ref{Tcliff2}}.  In this case, each fundamental domain for $\group$ on $M$ contains three catenoidal bridges.  The structure of the proof in this case is extremely similar to the argument in Case 1, so we omit the details. 
\end{proof}

\subsection*{The main theorem}
\phantom{ab}

\begin{theorem}
\label{TcliffE}
Let $\Mbreve$ be a minimal doubling of the Clifford torus constructed in Section \ref{Scliff}.  Then the first eigenspace of the Laplacian on $\Mbreve$ is spanned by the coordinate functions.  In particular, $\lambda_1(\Mbreve) =2$.
\end{theorem}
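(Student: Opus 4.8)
The plan is to transfer the spectral question to the symmetrized surface $(M,g')$ of Definition \ref{DM}, on which the side-exchanging involution $\rho$ is available, and then to turn the even-part bound of Lemma \ref{Lecj} and the odd-part estimate of Proposition \ref{Peigen} into an \emph{exact} count of the eigenvalues of $\Mbreve$ that are $\le 2$. Note first that since $\Sigma=\T$ here, Assumption \ref{Ayaubase} holds by the classical computation recalled in Appendix \ref{Stori}, so all of Section \ref{SYauCliff} applies. Since $\Mbreve$ is minimal, the restrictions of the four ambient coordinates are well known to be eigenfunctions of $\Mbreve$ with eigenvalue $2$, and they span a $4$-dimensional subspace $\Coord(\Mbreve)\subset\Ecal_2(\Mbreve,g)$ because $\Mbreve$ lies in a small tubular neighborhood of $\T$, which spans $\R^4$. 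In particular $\lambda_1(\Mbreve)\le 2$, and it suffices to prove $\dim\bigoplus_{0<\lambda_j(\Mbreve)\le 2}\Ecal_{\lambda_j}(\Mbreve)\le 4$: combined with $\Coord(\Mbreve)\subset\Ecal_2(\Mbreve)$ this forces $\bigoplus_{0<\lambda_j(\Mbreve)\le 2}\Ecal_{\lambda_j}(\Mbreve)=\Coord(\Mbreve)=\Ecal_2(\Mbreve)$, whence $\Mbreve$ has no eigenvalue in $(0,2)$, $\lambda_1(\Mbreve)=2$, and $\Ecal_{\lambda_1}(\Mbreve)=\Coord(\Mbreve)$.

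By Lemma \ref{Lemmbreve} the pullback $f^*$ is a near-isometry of $W^{1,2}$, so by the Courant--Fischer min--max principle $\lambda_k(\Mbreve,g)\Sim_{1+C\epsilon}\lambda_k(M,g')$ for every $k\ge 1$; thus $\lambda_k(\Mbreve)\le 2$ forces $\lambda_k(M,g')\le 2(1+C\epsilon)$, which for $\epsilon$ small lies below both $\lambda_2(\Sigma)$ and the smallest odd eigenvalue $4(1-C\epsilon)$ of $(M,g')$ provided by Proposition \ref{Peigen}; hence the corresponding eigenfunctions are even. I would then observe that the proofs of Lemmas \ref{Lwuest1}--\ref{Lwuest2}, Corollary \ref{Cray1}, and Lemma \ref{Lecj} use only that $u$ is an even eigenfunction of $(M,g')$ of eigenvalue $\le 2(1+C\epsilon)<\lambda_2(\Sigma)$, and not that $u\in\Eappr^+$: for such a $u$ orthogonal to $\Coord'(M)$ the associated function $w$ on $\Sigma$ has Rayleigh quotient $\Sim_{1+C\epsilon}\lambda(u)$ and satisfies $\|\wlow\|_{L^2(\Sigma)}\lesssim\epsilon\|w\|_{L^2(\Sigma)}$, so $\|dw\|_{L^2(\Sigma)}^2\ge\lambda_2(\Sigma)\|\whigh\|_{L^2(\Sigma)}^2$ forces $w=0$ and hence (Lemma \ref{Lwuest2}(i)) $u=0$. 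Therefore every even eigenfunction of $(M,g')$ with eigenvalue $\le 2(1+C\epsilon)$ lies in $\Coord'(M)$.

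By Lemma \ref{Lproj}, $\Coord'(M)$ is $4$-dimensional and a direct sum of eigenspaces with eigenvalues in $[2-\epsilon,2+\epsilon]$, so the preceding shows that $(M,g')$ has at most $4$ eigenvalues with multiplicity that are $\le 2(1+C\epsilon)$ --- precisely those of $\Coord'(M)$ --- and consequently $(\Mbreve,g)$ has at most $4$ eigenvalues with multiplicity in $(0,2]$. Since $\Coord(\Mbreve)\subset\Ecal_2(\Mbreve)$ already contributes $4$ of these, all equal to $2$, we conclude $\dim\Ecal_2(\Mbreve)=4=\dim\Coord(\Mbreve)$ and that $\Mbreve$ has no eigenvalue in $(0,2)$, so $\lambda_1(\Mbreve)=2$ and $\Ecal_{\lambda_1}(\Mbreve)=\Ecal_2(\Mbreve)=\Coord(\Mbreve)$, which is Theorem \ref{TcliffE}.

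The genuinely hard step is not this bookkeeping but Proposition \ref{Peigen}, already proved above: the naive picture of two copies of $\T$ joined by thin necks would suggest a tiny \emph{odd} eigenvalue, whereas the catenoidal geometry instead forces the smallest odd eigenvalue near $4$. Its proof --- where the full Clifford-torus symmetry group $\group$, not just the side-exchanging $\rho$, is essential --- compares the flux through the catenoidal waists of the rotationally-invariant leading part $\wave$ of an odd eigenfunction with that of the LD solution $\varphi$, whose averaged part satisfies $\partial^2_\sss\varphi_\ave+4\varphi_\ave=0$, and this pins $\lambda_i$ to $4+O(\epsilon)$. Within the proof of Theorem \ref{TcliffE} itself the only subtlety is the observation, used above, that the even-part estimates apply to every even eigenfunction of eigenvalue $\le 2(1+C\epsilon)$ and not only to those in $\Eappr^+$.
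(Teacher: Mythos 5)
Your proof is correct, relies on exactly the same infrastructure as the paper (Lemma \ref{Lemmbreve}, Lemma \ref{Lproj}, Lemma \ref{Lecj}, Proposition \ref{Peigen}), but the final deduction from these lemmas is genuinely different in character. The paper's proof fixes a first eigenfunction $u$ of $(\Mbreve,g)$, projects it onto $\Eappr=\Coord'$ to get $u'$, finds a coordinate function $v$ with $Pv=u'$ via Lemma \ref{Lproj}, and shows $\langle u,v\rangle_{L^2(\Mbreve)}\ge(1-C\epsilon)\|u\|^2$ by Cauchy--Schwarz, so that $u\notin\Coord^\perp$ unless $u=0$. You instead run a Courant--Fischer dimension count: since all eigenfunctions of $(M,g')$ with eigenvalue in $(0,2(1+C\epsilon)]$ are forced to be even (by Proposition \ref{Peigen}) and then to lie in the four-dimensional $\Coord'$ (by the extension of Lemma \ref{Lecj}), the near-isometry of Lemma \ref{Lemmbreve} bounds the number of eigenvalues (with multiplicity) of $(\Mbreve,g)$ in $(0,2]$ by four; since $\Coord(\Mbreve)\subset\Ecal_2(\Mbreve)$ is already four-dimensional, these must all equal $2$ and span $\Coord$. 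One thing your route buys is that it avoids the paper's short-cut estimate $\|u-u'\|_{L^2(\Mbreve)}\le C\epsilon\|u\|_{L^2(\Mbreve)}$, which quietly needs a spectral-gap argument controlling the high-frequency tail of $f^*u$; your min--max counting sidesteps this. You are also right to flag, and correct to observe, that the results \ref{Lwuest1}--\ref{Lecj} apply to every even eigenfunction of eigenvalue up to $2(1+C\epsilon)<\lambda_2(\Sigma)=4$ and not merely to $\Eappr^+$ as literally defined by the window $(1/4,2+\epsilon)$; the paper implicitly absorbs this into its choice of $\epsilon$, but your phrasing makes the bookkeeping transparent.
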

\begin{proof}
Since the first eigenspace $\Ecal_{\lambda_1}(\Mbreve, g)$ is nontrivial and since $\Coord(\Mbreve) \subset \Ecal_2(\Mbreve, g)$,  it suffices to prove that $\Ecal_{\lambda_1}(\Mbreve, g) \cap \Coord^\perp(\Mbreve) = \{0\}$.

Let $u \in \Ecal_{\lambda_1}(\Mbreve, g)$, and let $u'$ be the projection of $u$ onto $\Eappr$.   By \ref{Lecj} and \ref{Peigen}, we have $\Eappr = \Coord'$.  Further, by \ref{Lproj} there is a coordinate function $v \in \Coord(\Mbreve)$ such that $Pv = u'$, where $P : \Coord \rightarrow \Coord' = \Eappr$ is the projection.  By the Cauchy-Schwarz inequality, we have
\begin{align*}
\langle u, v \rangle_{L^2(\Mbreve)} &= \| u\|^2_{L^2(\Mbreve)} + \langle u, u-v \rangle_{L^2(\Mbreve)} 
\\
&\geq 
\| u \|^2_{L^2(\Mbreve)} - \| u \|_{L^2(\Mbreve)} \| u - v \|_{L^2(\Mbreve)}.
\end{align*}
On the other hand, by \ref{Lemmbreve}, \ref{Lproj}(ii), and the triangle inequality, we have
\begin{align*}
\| u - v\|_{L^2(\Mbreve)} &\leq \| u - u'\|_{L^2(\Mbreve)} + \| u' - v \|_{L^2(\Mbreve)}\\
&\leq C\epsilon \| u \|_{L^2(\Mbreve)}.
\end{align*}
Consequently, $\langle u, v \rangle_{L^2(\Mbreve)} \geq (1-C\epsilon) \| u \|^2_{L^2(\Mbreve)}$.  By taking $\epsilon>0$ small enough, we see that if $u \neq 0$, then $\langle u, v \rangle_{L^2(\Mbreve)} > 0$, which completes the proof.
\end{proof}


\appendices
\section*{Appendices}

\section{Eigenvalues of the Laplacian on Flat Tori}
\label{Stori}
$\phantom{ab}$
\nopagebreak

This appendix catalogues some well-known results \cite{Berger, MontielRos} concerning the spectrum of the Laplacian on flat, $2$-dimensional tori, following the presentation in \cite{MontielRos}. Up to dilations, each such torus is well-known to be isometric to a quotient $\R^2 / \Gamma$, where $\Gamma = \Gamma[a, b]$ is a lattice generated by $\{ (1, 0), (a, b)\}$ with
\begin{align*}
0 \leq a \leq \frac{1}{2}, 
\quad
b > 0,
\quad
a^2 + b^2 \geq 1. 
\end{align*}

The eigenvalues of the Laplacian of $\R^2 / \Gamma$ with the induced Euclidean metric are given by \cite[p. 146]{Berger}
\begin{align*}
\lambda_{pq} = 4\pi^2 \bigg( q^2 + \left( \frac{p - q a }{b}\right)^2 \bigg), 
\end{align*}
where $(p, q ) \in S = \{ ( r, s) \in \Z \times \Z : s \geq 0 \text{ or } s = 0 \text{ and } r \geq 0\}.$

Furthermore, the eigenspace corresponding to $\lambda_{pq}$ is spanned by 
\begin{align*}
f_{pq}(x,y) = \cos 2\pi \left( q x + \frac{p-qa}{b} y \right), \\
g_{pq}(x,y) = \sin 2\pi \left( qx + \frac{p-qa}{b} y \right). 
\end{align*}

\bibliographystyle{abbrv}
\bibliography{bibliography}
\end{document}